\newtheorem{theorem}{Theorem}[section]
\newtheorem{lemma}[theorem]{Lemma}
\newtheorem{proposition}[theorem]{Proposition}
\newtheorem{corollary}[theorem]{Corollary}
\theoremstyle{definition}
\theoremstyle{remark}
\newtheorem{remark}[theorem]{Remark}
\numberwithin{equation}{section}
  \renewcommand\section{\@startsection {section}{1}{\z@}%
                                   {-\bigskipamount}%
                                   {\medskipamount}%
                                   {\large\bfseries
                                   \raggedright}}
  \renewcommand\subsection{\@startsection {subsection}{2}{\z@}%
                                   {-\medskipamount}%
                                   {\smallskipamount}%
                                   {\bfseries
                                   \raggedright}}
\newcommand{\dom}{\operatorname{dom}}
\newcommand{\codom}{\operatorname{codom}}
\newcommand{\gr}{\operatorname{gr}}
\renewcommand{\S}{\operatorname{\mathsf{S}\!}}
\renewcommand{\S}{\mathsf{S}}
\newcommand{\intr}[2]{\overline{#1,#2}}
\renewcommand{\le}{\leqslant}
\renewcommand{\ge}{\geqslant}
\renewcommand{\geq}{\geqslant}
\renewcommand{\succeq}{\succcurlyeq}
\renewcommand{\smallint}{\textstyle{\int}}
\newcommand{\vv}{\mathbf{v}}
\newcommand{\w}{\mathbf{w}}
\newcommand{\al}{\alpha}
\newcommand{\be}{\beta}
\newcommand{\ga}{\gamma}
\newcommand{\si}{\sigma}
\newcommand{\la}{\lambda}
\newcommand{\La}{\Lambda}
\newcommand{\vpi}{\varphi}
\renewcommand{\Psi}{\overline{\Phi}}
\newcommand{\Om}{\Omega}
\newcommand{\B}{\mathfrak{B}}
\newcommand{\BB}{\mathcal{B}}
\newcommand{\D}{\mathcal{D}}
\newcommand{\X}{\mathcal{X}}
\newcommand{\Y}{\mathcal{Y}}
\newcommand{\ZZ}{\mathcal{Z}}
\renewcommand{\L}{\mathcal{S}}
\newcommand{\M}{\mathrm{M}}
\newcommand{\NN}{\mathcal{N}}
\renewcommand{\NN}{\mathrm{N}}
\newcommand{\LD}{\mathcal{L}\!\mathcal{D}}
\renewcommand{\LD}{\mathcal{L}{\kern -1.9pt}\mathcal{D}}
\renewcommand{\LD}{\mathcal{D}}
\renewcommand{\LD}{\mathcal{L}{\kern -4pt}\mathcal{C}}
\renewcommand{\LD}{\mathcal{R}{\kern -3pt}\mathcal{C}}
\newcommand{\id}{\mathrm{id}}
\renewcommand{\geq}[1]{\overset{#1}{\succeq}}
\newcommand{\ii}[1]{\mathrm{I}\!\left\{#1\right\}}
\newcommand{\supp}{\operatorname{\mathrm{supp}}}
\renewcommand{\P}{\operatorname{\mathsf{P}}} 
\newcommand{\PP}{\operatorname{\mathsf{P}}}
\newcommand{\E}{\operatorname{\mathsf{E}}}
\newcommand{\Var}{\operatorname{\mathsf{Var}}}
\newcommand{\N}{\mathbb{N}}
\newcommand{\Z}{\mathbb{Z}}
\newcommand{\R}{{\mathbb{R}}}
\newcommand{\F}[1]{\mathcal{F}_+^{#1}}
\renewcommand{\F}{\mathcal{F}}
\renewcommand{\H}[1]{\mathcal{H}_+^{#1}}
\renewcommand{\H}{\mathcal{H}}
\newcommand{\G}[1]{\mathcal{G}^{#1}_+}
\renewcommand{\G}{\mathcal{G}}
\renewcommand{\PP}[1]{\mathcal{P}^{#1}_+}
\renewcommand{\PP}{\mathcal{P}}
\newcommand{\ta}{{\tilde{a}}}
\newcommand{\tb}{{\tilde{b}}}
\newcommand{\tf}{{\tilde{f}}}
\newcommand{\tg}{{\tilde{g}}}
\newcommand{\tI}{{\tilde{I}}}
\newcommand{\tw}{{\tilde{w}}}
\newcommand{\tww}{{\tilde{\w}}}
\newcommand{\tx}{{\tilde{x}}}
\newcommand{\tz}{{\tilde{z}}}
\newcommand{\hF}{{\hat\F}}
\newcommand{\hG}{{\hat\G}}
\newcommand{\near}[1]{\mathrel{\underset{#1}{\scalebox{1.1}{$\nearrow$}}}}
\newcommand{\sear}[1]{\mathrel{\underset{#1}{\scalebox{1.1}{$\searrow$}}}}
\renewcommand{\d}{\mathrm{d}}
\newcommand{\vp}{\varepsilon}
\begin{document}
\setcounter{page}{1}

\title[Cones of generalized monotone functions and dual cones]{Convex cones of generalized multiply monotone functions and the dual cones}

\author[I. Pinelis]{Iosif Pinelis
}

\address{$^{1}$ Department of Mathematical Sciences, 
Michigan Technological University, 
Houghton, Michigan 49931, USA.}
\email{\textcolor[rgb]{0.00,0.00,0.84}{ipinelis@mtu.edu}}


\subjclass[2010]{Primary 46N10; Secondary 26A48,   
26A51, 
26A46,  
26D05, 
26D07, 
26D10, 
26D15, 
34L30, 
41A10, 
46A55, 
 	49K30, 
49M29, 
52A07, 
 	52A41, 
 	60E15, %
 	 	90C25, 
90C46.}

\keywords
{Dual cones, multiply monotone functions, 
generalized moments, stochastic orders, probability inequalities}


\begin{abstract}
Let $n$ and $k$ be nonnegative integers such that $1\le k\le n+1$. 
The convex cone $\F_+^{k:n}$ of all functions $f$ on an arbitrary interval $I\subseteq\R$ whose derivatives $f^{(j)}$ of orders $j=k-1,\dots,n$ are nondecreasing is characterized 
in terms of extreme rays of the cone $\F_+^{k:n}$. 
A simple description of the convex cone dual to $\F_+^{k:n}$ is given. 
These results are useful in, and were motivated by, applications in probability. 
In fact, the results are obtained in a more general setting with certain generalized derivatives of $f$ of the $j$th order in place of $f^{(j)}$. 
Somewhat similar results were previously obtained in the case when the left endpoint of the interval $I$ is finite, with certain additional integrability conditions; such conditions fail to hold in the mentioned applications. Development of substantially new methods was needed to overcome the difficulties. 
\end{abstract} 

\maketitle

\tableofcontents

\section{Introduction}\label{intro} 



In applications in probability (see e.g.\ \cite{
denuit-etal,denuit-etal99,normal,asymm,bent-ap,shaked-shanti,pin-hoeff-arxiv-reftoAIHP,
left-arxiv} and references therein), one is concerned with stochastic domination, defined by a formula of the form 
\begin{equation*}
	X\geq\F Y\overset{\operatorname{def}}\iff
	X\geq{}Y ({\operatorname{mod}}\ \F)
	\overset{\operatorname{def}}\iff\E f(X)\ge\E f(Y)\ \text{for all }f\in\F, 
\end{equation*}
where $X$ and $Y$ are random variables (r.v.'s) and $\F$ is a class of functions, assuming the expectations are appropriately defined. 

In the case when $\F$ is the class of all nondecreasing functions, the relation $\geq\F$ is called the first-order stochastic dominance. 

In general, the functions $f\in\F$ may be referred to as the test functions. 

Unless $\E f(X)=\E f(Y)=\infty$ or $\E f(X)=\E f(Y)=-\infty$, the inequality $\E f(X)\ge\E f(Y)$ can be rewritten as $\nu(f):=\int f\d\nu\ge0$, where $\nu$ is the signed measure (say $\nu_{X,Y}$) equal the difference between the probability distributions of the r.v.'s $X$ and $Y$. 

More generally, one may allow $\nu$ to belong to a larger set (say $\NN$) of signed measures on an interval $I$, which are not necessarily the differences between two probability measures. Usually, the set $\NN$ is assumed to be a convex cone. One can define the cone dual to $\F$ by the formula
\begin{equation*}
	\hat\F:=\{\nu\in\NN\colon\nu(f)\ge0\text{ for all }f\in\F\}.  
\end{equation*}
Thus, at least in the case when the r.v.'s $X$ and $Y$ are such that $\E f(X)$ and $\E f(Y)$ are both finite for all $f\in\F$, one will have $X\geq\F Y\iff\nu_{X,Y}\in\hat\F$. 
 
For any $\al$ and $\be$ in $\Z\cup\{\infty\}$, let $\intr\al\be:=\{j\in\Z\colon\al\le j\le\be\}$. 
In what follows, assume that the values of indices $i$, $j$, $k$, $\ell$, $m$, $n$ are each in the set $\intr0\infty$, unless specified otherwise. 

Classes of test functions of particular interest in the mentioned applications are 
\begin{equation}\label{eq:F unit}
	\F_+^{k:n}:=\F_+^{k:n}(I):=
	\big\{f\in\L^n\colon\text{$f^{(j)}$ is nondecreasing for each $j\in\intr{k-1}n$\,}\big\},  
\end{equation}
where 
\begin{equation}\label{eq:k le n+1}
	1\le k\le n+1,  
\end{equation}
$I$ is an interval in $\R$ with endpoints $a$ and $b$ such that $a<b$, 
$\L^n=\L^n(I)$ is the set of all $(n-1)$-times differentiable functions $f\in\R^I$ such that the function 
$f^{(n-1)}$ is absolutely continuous with an almost everywhere (a.e.) derivative (denoted here by $f^{(n)}$) that is (i) right-continuous on the interval $I\setminus\{b\}$ and (ii) left-continuous at the point $b$ in the case when $b\in I$.  
The above definition of the set $\L^n$ is actually valid only if $n\ge1$; let us complement it by letting $\L^0$ be the set of all Borel-measurable functions in $\R^I$. Then, in particular, $\F_+^{1:0}(I)$ will be the set of nondecreasing functions $f\in\R^I$. 

The functions of the class $\F_+^{n:n-1}$ are widely known as $n$-convex functions; see e.g.\ \cite{pec-prosch-tong
} and references therein. 

On the other hand, the nonnegative functions $g$ whose ``horizontal reflections'' $g(-\cdot)$ belong to the class $\F_+^{1:n-1}$ are also widely known as $n$-monotone functions;  see e.g.\ \cite{williamson56,levy62,
lefevre13_n-mono-s-conv}; in particular, in \cite{lefevre13_n-mono-s-conv} multiply monotone distributions are compared using $s$-convex stochastic orders -- on $[0,\infty)$, with four applications to insurance;  
other applications to insurance problems were given in \cite{denuit14}. Clearly, the notion of the $n$-monotonicity is an extension of Bernstein's complete monotonicity.  
 
Note that the class $\F_+^{k:n}$ \big(or, rather, its ``reflection'' $\F_-^{k:n}$ defined in \eqref{eq:F_-}\big) may be considered a generalization/extension of the class of all completely monotone functions on $(0,\infty)$ (in the Bernstein sense). Indeed, the latter class coincides with $\bigcap_{n=0}^\infty\F_-^{1:n}\big([0,\infty)\big)$; cf.\ Proposition~3.4 in \cite{pin-hoeff-arxiv-reftoAIHP} and its proof therein. 

The case of stochastic domination ${\operatorname{mod}}\ \F_+^{k:n}$ with $a>-\infty$ has been systematically studied in the literature; see e.g.\ \cite{karlin-novikoff,ziegler,karlin-studden,pec-prosch-tong,denuit-etal,denuit-etal99}. In this case, one can rely on the Taylor expansion 
\begin{equation}\label{eq:taylor-intro}
	f(x)=\sum_{i\in\intr0n}f^{(i)}(a+)\,\frac{(x-a)^i}{i!}
	+\int_{I
	}\d f^{(n)}(t)\frac{(x-t)_+^n}{n!}  
\end{equation}
for $x\in I$; as usual, we let $u_+:=0\vee u$ and $u_+^v:=(u_+)^v$ for all $u\in\R$ and $v\in[0,\infty)$, along with the convention $0^0:=1$. 
Note that $f^{(i)}\ge0$ and hence $f^{(i)}(a+)\ge0$ for any $f\in\F_+^{k:n}$ and any $i\in\intr kn$. 
It is also clear that the functions $I\ni x\mapsto c(x-a)^i$, $I\ni x\mapsto(x-a)^j$, and $I\ni x\mapsto(x-t)_+^n$ belong to the set $\F_+^{k:n}$ for all $c\in\R$, $i\in\intr0{k-1}$, $j\in\intr kn$, and $t\in I$. 
So, assuming appropriate integrability conditions, one has the following characterization of the dual cone $\hat\F_+^{k:n}$: a signed measure $\nu\in\NN$ is in $\hat\F_+^{k:n}$ if and only if all of the following three conditions hold: 
	\begin{enumerate}[label=(\roman*)]
	\item[(i)] 
$\int_I(x-a)^i\,\nu(\d x)=0$ 
for all $i\in\intr0{k-1}$;  
	\item [(ii)]
	\rule{0pt}{10pt}$\int_I(x-a)^j\,\nu(\d x)\ge0$ for all $j\in\intr kn$;   
	\item[(iii)] 
	\rule{0pt}{10pt}$\int_I(x-t)_+^n\,\nu(\d x)\ge0$ 
	for all $t\in I$. 
\end{enumerate}
Such a characterization of the dual cone is very useful, as it reduces 
the verification of the inequality $\nu(f)\ge0$ for all test functions $f\in\F_+^{k:n}$ to the verification of this inequality just in the case when $f$ is in a certain set of polynomials and their ``positive parts'' $x\mapsto(x-t)_+^n$. 
One may note that, in the case when $k\le n$ (cf.\ \eqref{eq:k le n+1}), the conjunction of the above conditions (i) and (ii) is equivalent to that of conditions 
	\begin{enumerate}
	\item[(i$'$)]\label{i'} 
$\int_I x^i\,\nu(\d x)=0$ 
for all $i\in\intr0{k-1}$;  
	\item [(ii$'$)]
	\rule{0pt}{10pt}$\int_I x^k\,\nu(\d x)\ge0$;   
	\item [(ii$''$)]
	\rule{0pt}{10pt}$\int_I(x-a)^j\,\nu(\d x)\ge0$ for all $j\in\intr{k+1}n$.   
\end{enumerate}

\bigskip
Alas, Taylor expansion \eqref{eq:taylor-intro} does not seem to make sense when $a=-\infty$ and $n\ge1$, and then the entire argument no longer holds; cf.\ e.g.\ \cite[Remark~3.6]{denuit-etal}. 


On the other hand, it is the case when $I=\R$ and hence $a=-\infty$ that is of foremost interest in the mentioned applications in probability \cite{
normal,bent-ap,pin-hoeff-arxiv-reftoAIHP,
left-arxiv}, as the distribution of the r.v.\ $X$ in those applications may be normal (as e.g.\ in \cite{normal}) or a convolution of a normal distribution and a Poisson one (as e.g.\ in \cite{pin-hoeff-arxiv-reftoAIHP}), whose support set will then be the entire real line, or with a support set bounded from above rather than from below, as e.g.\ in \cite{
normal,bent-ap,pin-hoeff-arxiv-reftoAIHP,
left-arxiv}.  
In general as well, it is desirable to allow the support sets of both $X$ and $Y$ not to be a priori bounded either from above or below. 


Because of the lack of the Taylor expansion \eqref{eq:taylor-intro}, it is much more difficult to obtain a characterization of the dual cone $\hat\F_+^{k:n}$ in the case when $a=-\infty$ and $k\ne n+1$. 
The first step here is to observe that for any $f\in\F_+^{k:n}$ one has $f^{(j)}(a+)=0$ for all $j\in\intr{k+1}n$, which results in
the following Taylor expansion of the function $f^{(k)}$ ``at the point $-\infty+:=(-\infty)+$'': 
\begin{equation}\label{eq:taylor-unit-f^k}
\begin{aligned}
	f^{(k)}(x_k)=&\sum_{i\in\intr kn}f^{(i)}(-\infty+)\,\frac{(x_k+\infty)^{i-k}}{(i-k)!}
	+\int_I\d f^{(n)}(t)\frac{(x_k-t)_+^{n-k}}{(n-k)!} \\ 
	=&f^{(k)}(-\infty+)
	+\int_I\d f^{(n)}(t)\frac{(x_k-t)_+^{n-k}}{(n-k)!}  
\end{aligned}	
\end{equation}
for $x_k\in I$; here and elsewhere, we are assuming the conventions $0\cdot c:=0$ for any $c\in[-\infty,\infty]$ and $\infty^0:=1$. 
Next, we fix an arbitrary $z\in Y$ and, for any $y\in I\cap(-\infty,z)$, truncate the above Taylor expansion of the $k$th derivative $f^{(k)}$ by replacing the integral $\int_I$ in \eqref{eq:taylor-unit-f^k} with $\int_{I\cap[y,\infty)}$; let us denote the resulting function by 
$(f^{(k)})_y$. 
Finally, the so truncated $k$th derivative is lifted back up, in the sense that a function $g_y$ is constructed so that the conditions $(g_y)^{(k)}=(f^{(k)})_y$ and $(g_y)^{(i)}(z)=f^{(i)}(z)$ for all $i\in\intr0{k-1}$ hold. In fact, $g_y$ is completely determined by these conditions and is given by  formulas \eqref{eq:g_y=}, \eqref{eq:P_zy}, and \eqref{eq:R_zy}. 
Moreover, $g_y$ approximates $f$ in the sense of \eqref{eq:betw}. 
So, $g_y$ may be considered an approximate Taylor expansion of $f$ at $a=-\infty$. 
As Remark~\ref{rem:left} shows, in general functions $f\in\F_+^{k:n}$ admit only of such an approximate Taylor expansion of $f$ at $a=-\infty$; that is, one cannot do without the truncation described above. 

However, this approximate Taylor expansion of $f$ is enough to obtain a desired characterization of the dual cone $\hat\F_+^{k:n}$ in the case when $a=-\infty$ and $k\ne n+1$, 
which is as follows: 
a signed measure $\nu\in\NN$ is in $\hat\F_+^{k:n}$ if and only if 
	\begin{enumerate}[label=(\roman*$_{-\infty}$),itemsep=1ex,leftmargin=1.5cm]
	\item
	\label{i_-infty}
$\int_I x^i\,\nu(\d x)=0$ 
for all $i\in\intr0{k-1}$;  
	\item 
	$\int_I x^k\,\nu(\d x)\ge0$;   
	\item
	$\int_I(x-t)_+^n\,\nu(\d x)\ge0$ 
	for all $t\in I$. 
\end{enumerate} 

%
%


One can see that conditions (i$_{-\infty}$), (ii$_{-\infty}$), and (iii$_{-\infty}$) 
are, respectively, the same as conditions (i$'$), (ii$'$), and (iii) on page~\pageref{i'}; however, condition (ii$''$) from page~\pageref{i'} ``disappears'' when $a=-\infty$. 

The case when $k=n+1$ is overall simpler (than the just discussed case $k\ne n+1$) but has a certain peculiarity to it, to be addressed later in this paper. 


In fact, we consider a more general version of the class $\F_+^{k:n}$, defined in \eqref{eq:F unit}, by replacing the operators $f\mapsto f^{(j)}$ of multiple differentiation with more general differential operators, including ones of the form 
\begin{equation}\label{eq:E^j,intro}
	E^j:=E^j_{w_0,\dots,w_j}:=(R_{w_j}D)\cdots(R_{w_1}D)R_{w_0}, 
\end{equation}
where $D$ is the usual differentiation operator, $w_0,w_1,\dots$ are positive smooth enough functions, and $R_wf:=f/w$ for any function $f\in\R^I$. 
Thus, the operator $E^j$ is the alternating composition of the operators of the division by positive functions and the differentiation operator. 
The functions $w_0,w_1,\dots$ may be referred to as the \emph{gauge} functions. 
In the \emph{unit-gauges} case, with $w_0=w_1=\dots=1$, the operator $E^j_{w_0,\dots,w_j}$ reduces back to $D^j$, the operator of the $j$-fold differentiation.

As will be shown in Proposition~\ref{prop:invar}, a special case of non-unit gauge functions -- with $w_0=1$ and $w_1=w_2=\dots=\psi'$ for a general continuous function $\psi'$ -- arises from the unit gauges by (generally nonlinear) change of scale.  

A reason to consider general gauge functions $w_0,w_1,\dots$ is to encompass, in particular, the corresponding results in \cite{karlin-novikoff,ziegler,karlin-studden} on dual cones, defined in terms of extended complete Tchebycheff systems. 
Details on this are given in Subsection~\ref{tcheb}. 
The theory and applications of Tchebycheff systems have a long and rich history; see e.g.\ \cite{karlin-studden,krein-nudelman
}. 

Dealing with general, not necessarily unit gauge functions $w_0,w_1,\dots$ requires overcoming more difficulties. One of them is that such an explicit representation of the approximation $g_y$ of $f$ as the one mentioned above and given by formulas \eqref{eq:g_y=}, \eqref{eq:P_zy}, and \eqref{eq:R_zy} for the unit-gauges case is then no longer available. 
Here, to be used in place of usual polynomials, generalized polynomials are introduced, depending on the sequence $\w:=(w_0,w_1,\dots)$ of gauge functions; rather naturally, a function $p$ is called a 
$\w$-polynomial of degree $j$ if the function $E^j_{w_0,\dots,w_j}p$ is a nonzero constant.  

Another notable distinction from the unit-gauges case is that in general, in place of the set $\intr{k+1}n$ in condition (ii$''$) on page~\pageref{i'} 
for the unit-gauges case, one may get any given subset of $\intr{k+1}n$, depending on the choice of the gauge functions $w_0,w_1,\dots$, as follows from Proposition~\ref{prop:M}. 
No phenomenon of this kind appears to have been observed before. 

%

Also, in distinction with \cite{karlin-novikoff,ziegler,karlin-studden}, we impose no smoothness conditions on the gauge functions $w_j$, except for being Borel-measurable. Accordingly, the entries of the differentiation operator $D$ in the definition \eqref{eq:E^j,intro} need to be slightly modified, with some extra care exercised in the definition of the composition of operators; cf.\ \eqref{eq:D^j,E^j} and the paragraph containing formula \eqref{eq:TS}. 

Closely related moment problems for generalized polynomials on a semi-infinite interval in $\R$ and on $\R$ itself were considered by Kre{\u\i}n and Nudel$'$man \cite[Chapter~V]{krein-nudelman}. Essentially, the method used there is compactification of the (semi-)infinite interval -- which, however, requires additional restrictions on the limit behavior of certain generalized polynomials or their ratios near the infinite endpoint(s). 
No such additional restrictions are assumed in the present paper. 


The paper is organized as follows. In Sections~\ref{gauged}--\ref{cones} we develop necessary, mostly quite novel tools in order to provide a convenient enough description of the cone $\F_+^{k:n}$ of generalized monotone functions. These developments, listed in the table of contents, culminate in 
formulas \eqref{eq:betw} and \eqref{eq:g_y in}, according to which every function $f\in\F_+^{k:n}$ is approximated in a monotone manner by a function $g_y$, which is the sum of two summands: (i) a member of a certain set of generalized polynomials and (ii) a mixture of the ``positive parts'' (defined by formula \eqref{eq:p^+}) of generalized polynomials, belonging to another set. This new approximative representation of the functions $f\in\F_+^{k:n}$ enables us to provide a description, in Section~\ref{dual}, of the cones dual to the cones $\F_+^{k:n}$, for any subinterval $I$ of $\R$ and any gauge functions. This description of the dual cones is quite convenient in the desired applications and looks quite similar to the known descriptions of this kind, such as the mentioned ones in \cite{karlin-novikoff,ziegler,karlin-studden}, which latter were obtained assuming $a>-\infty$ and/or certain integrability conditions. However, without such additional conditions, quite substantial difficulties needed to be overcome.  
The  close relations of our results with the Tchebycheff systems are discussed in 
Subsection~\ref{tcheb}. Applications are briefly discussed in Section~\ref{appls}.

\section{Compositions of operators of gauged differentiation}\label{gauged}
Compositions of operators of gauged differentiation were mentioned above; recall formula \eqref{eq:E^j,intro}. 
In order to accurately define such compositions, 
let us first carefully define the composition of arbitrary maps, without any restrictions on their domains or codomains. Therefore, we shall need to be quite clear about 
the notion of a map.  

As usual, we say that $T$ is a map (equivalently, a mapping or a function or an operator) if $T$ is a triple of the form $(\X,\Y,\G)$, where $\X$ and $\Y$ are any sets and $\G$ is any subset of the set $\X\times\Y$ such that for each $x\in\X$ there is a unique $y\in\Y$ such that $(x,y)\in\G$. Thus, the sets $\X=:\dom T$, $\Y=:\codom T$, and $\G=:\gr T$ are attributes of the map $T$, called the domain, codomain, and graph of $T$, respectively. 
The identity map $\id_\X$ is the triple $(\X,\X,\D_\X)$ with $\D_\X:=\{(x,x)\colon x\in\X\}$. 

We write $T\colon\X\to\Y$ to mean that $T$ is a map with $\dom T=\X$ and $\codom T=\Y$;  
let, as usual, $\Y^\X$ denote the set of all such maps $T$.  

For any map $T$ and any $x\in\dom T$, $T(x)$ denotes the unique $y\in\codom T$ such that $(x,y)\in\gr T$. As usual, if $T$ is a linear map, let us write $Tx$ instead of $T(x)$. 
Note that it is not required for the set $\codom T$ to be the same as the image $T(\dom T):=\{T(x)\colon x\in\dom T\}$ of $\dom T$ under $T$. 

Given two maps, say $T_1$ and $T_2$, let us write $T_1\subseteq T_2$ if $\gr T_1\subseteq\gr T_2$ and $\codom T_1\subseteq\codom T_2$. In particular, $T_1\subseteq T_2$ implies   
$\dom T_1\subseteq\dom T_2$. 
Clearly, $T_1=T_2$ if and only if $T_1\subseteq T_2$ and $T_2\subseteq T_1$.


Let now $S\colon\X\to\Y_1$ and $T\colon\Y_2\to\ZZ$ be two maps; the sets $\Y_1$ and $\Y_2$ may differ from each other. The composition of $S$ and $T$ is denoted as $T\circ S$ \big(or simply as $TS$ if there is hardly a chance to confuse the composition with a pointwise product of functions\big) and defined by the conditions 
\begin{equation}
\text{$T\circ S\colon S^{-1}(\Y_2)\to\ZZ$\quad and\quad $(T\circ S)(x)=T(S(x))$ for all $x\in S^{-1}(\Y_2)$,}	
\end{equation}
where $S^{-1}(\Y_2):=\{x\in\X\colon S(x)\in\Y_2\}$. Thus, for any $x$ and $z$ one has 
\begin{equation}\label{eq:TS}
	(T\circ S)(x)=z\iff\big(S(x)\in\Y_2\,\ \&\ \,T(S(x))=z\big).
\end{equation} 
By the above definition, the composition $T\circ S$ 
and, in particular, its domain and codomain 
are completely determined by $S$ and $T$; in fact, the domain $S^{-1}(\Y_2)$ of the composition $T\circ S$ is already completely determined by $S$ and the domain $\Y_2$ of $T$, whereas the codomain $\ZZ$ of $T\circ S$ is the same as that of $T$. 

Now one can define, by induction, the composition of any finite number of any maps: 
$T_n\circ\cdots\circ T_1:=T_n\circ\cdots\circ T_3\circ(T_2\circ T_1)$ for any $n\in\intr3\infty$. 
Then, in particular, the domain and codomain of $T_n\circ\cdots\circ T_1$ are completely determined by maps $T_1,\dots, T_n$.  

If $S\colon\X\to\Y$, then the inverse map $S^{-1}$ is any map $T$ such that $T\colon\Y\to\X$, $T\circ S=\id_\X$, and $S\circ T=\id_\Y$. Clearly, any map has no more than one inverse. 

Take now any interval $I\subseteq\R$ of nonzero length; a particular possibility is that $I=\R$. 
Let 
\begin{equation*}
	a:=\inf I\quad\text{and}\quad b:=\sup I, 
\end{equation*}
%
so that 
\begin{equation}\label{eq:a,b}
-\infty\le a<b\le\infty. 
\end{equation}



Let $\BB$ denote the set of all Borel-measurable 
functions in $\R^I$. 
Then let 
\begin{equation*}
	\BB^+:=\{w\in\BB\colon w>0\}  
\end{equation*} 
and, for each $w\in\BB^+$,  define the linear operator $R_w$ by the conditions 
\begin{equation}\label{eq:R_w}
	R_w\colon\BB\to\BB\quad\text{and\quad}R_w f:=\frac fw\text{ for all }f\in\BB.  
\end{equation}

Let the ligature $\LD=\LD(I)$ denote the set of all functions in $\R^I$ that are (i) right-continuous on the interval $I\setminus\{b\}$ and (ii) left-continuous at the point $b$ in the case when $b\in I$. 
It is easy to see that $\LD\subseteq\BB$. 

Take now any $w\in\BB^+$. Let 
\begin{equation}\label{eq:LD_w}
\LD_w:=\big\{f\in\BB\colon R_w f\in\LD\big\}=\Big\{f\in\BB\colon \frac fw\in\LD\Big\}.   	
\end{equation} 

Next, introduce 
\begin{equation}\label{eq:D_w}
\begin{aligned}
	\L_w:=&\text{ the set of all functions $f\in\BB$ 
such that} \\ 
&\text{ there is a function $D_w f\in\LD_w$ satisfying the condition} \\ 
	&\ f(x)=f(z)+\int_z^x(D_w f)(u)\d u
	\quad\text{for all $x$ and $z$ in $I$. }
\end{aligned}	
\end{equation}
%
Here and elsewhere, $\int_z^x:=-\int_x^z$ if $x<z$. 
For any $f\in\L_w$, the function $f$ is continuous and even absolutely continuous, and a derivative of $f$ exists almost everywhere (a.e.) and coincides with $D_w f$ a.e. 
Therefore, in view of the condition $D_w f\in\LD_w$, 
the ``generalized derivative'' function $D_w f$ is uniquely determined, for each $w\in\BB^+$ and each $f\in\L_w$. 
Thus, one has the linear operator 
\begin{equation*}
	D_w\colon\L_w\to\BB. 
\end{equation*}
Here it may be noted that, for any two functions $w$ and $v$ in $\BB^+$ with $\{\frac vw,\frac wv\}\subseteq\LD$, 
one has $\LD_w=\LD_v$, whence $D_w=D_v$ (and, in particular, $\L_w=\L_v$). 

Let now $\w:=(w_0,w_1,\dots)$ be a sequence of 
locally bounded 
functions 
in $\BB^+$; 
the term ``locally bounded (on $I$\,)'' means ``bounded on any compact subset of $I$\,''. 


For each $j\in\intr0\infty$, let 
\begin{alignat}{2}
	& D_j:=D_{\w,j}:=D_{w_{j+1}}R_{w_j},\quad && \text{so that }D_j\colon R_{w_j}^{-1}(\L_{w_{j+1}})\to\BB, \label{eq:D_j} \\ 
	\intertext{and for each $j\in\intr1\infty$, let }
	& E_j:=E_{\w,j}:=R_{w_j}D_{w_j},\quad && \text{so that }E_j\colon\L_{w_j}\to\BB. \label{eq:E_j} 
\end{alignat}



Further, let 
\begin{equation}\label{eq:^0}
	\L^0:=\L_\w^0:=\BB,\quad D^0:=D_\w^0:=\id_\BB,\quad\text{and}\quad E^0:=E_\w^0:=R_{w_0}, 
\end{equation}
where $\id_\BB\colon\BB\to\BB$ is the identity operator, so that $D^0f=f$ for all $f\in\BB$. 
Thus, one has the linear operators 
\begin{equation*}
	\text{$D^0\colon\L^0\to\BB$\quad and\quad $E^0\colon\L^0\to\BB$.}   
\end{equation*}
Now for all $j\in\intr1\infty$ define the linear operators $D^j$\ and\ $E^j$ 
recursively by the formulas 
%
\begin{equation}\label{eq:D^j,E^j}
\begin{alignedat}{2}	
D^j:=D_\w^j :=&D_{j-1}D^{j-1}=D_{j-1}\cdots && D_0 
=D_{w_j}R_{w_{j-1}}
\cdots D_{w_1}R_{w_0},   \\  
E^j:=E_\w^j:=&E_jE^{j-1}=E_j\cdots E_1E^0   
&&=R_{w_j}D_{w_j}
\cdots R_{w_1}D_{w_1}R_{w_0} \\   
& &&=R_{w_j}D^j;       
\end{alignedat}
\end{equation}
it follows that $D^j$\ and\ $E^j$ have the same domain,  
\begin{equation}\label{eq:S^j}
\begin{aligned}
\L^j:=\L_\w^j:=&\{f\in\L^{j-1}\colon D^{j-1}f\in R_{w_{j-1}}^{-1}(\L_{w_j})\} \\ 
=&\{f\in\L^{j-1}\colon E^{j-1}f\in\L_{w_j}\},   
\end{aligned}	
\end{equation}
and the same codomain, $\BB$. 
%
%
%
Thus, for each $j\in\intr0\infty$ one has the linear operators 
\begin{equation}\label{eq:domain}
	\text{$D^j\colon\L^j\to\BB$\quad and\quad $E^j\colon\L^j\to\BB$.}   
\end{equation}
It also follows from \eqref{eq:S^j} that 
\begin{equation}\label{eq:supseteq}
	\L^0
	\supseteq\L^1\supseteq
	\cdots. 
\end{equation}

Introduce now the notation 
\begin{equation}\label{eq:f^j}
	f^{(j)}:=f_\w^{(j)}:=E^j f=\frac{D^jf}{w_j} 
\end{equation}
for $j\in\intr0\infty$ and 
$f\in\L^j$. 
Note that   
\begin{equation}\label{eq:f^(j)more}
\begin{aligned}
	&f\in\L^n\implies \\ 
	&\quad\quad\quad\quad\quad
	\left\{
	\begin{aligned}
	&f^{(0)},\dots,f^{(n-1)}\text{ are absolutely continuous, }f^{(n)}\in\LD, \\ 
	&f=f^{(0)}w_0, 
	\\ 
	&f^{(j)}(x)=f^{(j)}(z)+\int_z^x f^{(j+1)}(u)w_{j+1}(u)\d u \\ 
	&\qquad\qquad\qquad\text{ for all }j\in\intr0{n-1}, x\in I, z\in I. 
	\end{aligned}
	\right.
\end{aligned}	
\end{equation}
  
The functions $w_j$ may be referred to as the \emph{gauge} functions. 

Concerning these functions, 
the simplest and most common case is when $w_j=1$ for all $j
$, which may be referred to as 
the \emph{unit-gauges} case. 
In that case, for each $n\in\intr1\infty$ the set $\L^n$ coincides with the set of all $(n-1)$-times  differentiable functions $f\colon I\to\R$ such that the function 
$f^{(n-1)}$ is absolutely continuous with an a.e. derivative coinciding a.e.\ with a function in $\LD$, and at that for each $j\in\intr0{n-1}$ each of the ``gauged'' higher-order derivatives $f^{(j)}$ and $D^jf$ coincides with the usual $j$th derivative of $f$. 



The generalized derivatives $f^{(j)}$ have the following simple but important invariance property with respect to (generally nonlinear) change of scale. 


\begin{proposition}\label{prop:invar} 
Let $\psi\colon\tI\to I$ be a continuously differentiable function with $\psi'>0$, mapping some interval $\tI$ with endpoints $\ta$ and $\tb$ onto the interval $I$. \big(If $\ta\in\tI$, then $\psi'(\ta)$ is of course understood as the right derivative of $\psi$ at $\ta$, and similarly for $\tb$.\big) 
Let $\tww:=(\tw_0,\tw_1,\dots)$, where 
\begin{equation}\label{eq:tw}
\text{$\tw_0:=w_0\circ\psi$\quad and\quad $\tw_j:=(w_j\circ\psi)\,\psi'$ for $j\in\intr1\infty$.} 	
\end{equation}
Then for each $j\in\intr0\infty$ the following bipartite statement is true: 
\begin{enumerate}[label=\emph{(\Roman*)}]
	\item for each $f\in\L_\w^j$, one has 
\begin{equation*}
	f\circ\psi\in\L_\tww^j\quad\text{and}\quad
	(f\circ\psi)^{(j)}_\tww=f^{(j)}_\w\circ\psi;  
\end{equation*}
	\item	the map $\L_\w^j\ni f\longmapsto 
	f\circ\psi\in\L_\tww^j$ is bijective. 
\end{enumerate} 
\end{proposition}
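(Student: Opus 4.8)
The plan is to establish part (I) by induction on $j$, built on the integral recursion \eqref{eq:f^(j)more} and the change-of-variables formula for the substitution $u=\psi(\tu)$, and then to derive part (II) cheaply by applying (I) a second time to the inverse map $\psi^{-1}$. For the base case $j=0$, recall that $\L_\w^0=\BB$ and $f^{(0)}_\w=R_{w_0}f=f/w_0$. Since $\psi$ is continuous, $f\circ\psi$ is Borel-measurable whenever $f$ is, so $f\circ\psi\in\L_\tww^0$; and using $\tw_0=w_0\circ\psi$ from \eqref{eq:tw}, I get $(f\circ\psi)^{(0)}_\tww=(f\circ\psi)/\tw_0=(f/w_0)\circ\psi=f^{(0)}_\w\circ\psi$, as required.

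For the inductive step, assume (I) for $j$ and take $f\in\L_\w^{j+1}\subseteq\L_\w^j$; set $g:=f\circ\psi$. The induction hypothesis gives $g\in\L_\tww^j$ and $g^{(j)}_\tww=f^{(j)}_\w\circ\psi$, while \eqref{eq:S^j} shows $f^{(j)}_\w=E^jf\in\L_{w_{j+1}}$, so \eqref{eq:f^(j)more} yields
\[
	f^{(j)}_\w(\psi(\tx))=f^{(j)}_\w(\psi(\tz))+\int_{\psi(\tz)}^{\psi(\tx)}f^{(j+1)}_\w(u)\,w_{j+1}(u)\,\d u
\]
for all $\tx,\tz\in\tI$. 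I then apply the substitution $u=\psi(\tu)$, legitimate because $\psi$ is $C^1$ with $\psi'>0$ and the integrand is locally integrable (being the a.e.\ derivative of the absolutely continuous function $f^{(j)}_\w$). Using $w_{j+1}(\psi(\tu))\,\psi'(\tu)=\tw_{j+1}(\tu)$, the right-hand integral becomes $\int_\tz^\tx(f^{(j+1)}_\w\circ\psi)(\tu)\,\tw_{j+1}(\tu)\,\d\tu$, which exhibits $g^{(j)}_\tww=f^{(j)}_\w\circ\psi$ as satisfying the identity defining $D_{\tw_{j+1}}$ in \eqref{eq:D_w}, with candidate derivative $(f^{(j+1)}_\w\circ\psi)\,\tw_{j+1}$.

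It remains to verify that this candidate lies in $\LD_{\tw_{j+1}}$, i.e.\ that $f^{(j+1)}_\w\circ\psi\in\LD(\tI)$. Since $f^{(j+1)}_\w\in\LD(I)$ and $\psi$ is a strictly increasing continuous bijection of $\tI$ onto $I$ carrying endpoints to endpoints, composition with $\psi$ preserves right-continuity on $\tI\setminus\{\tb\}$ and left-continuity at $\tb$ when $\tb\in\tI$; hence $f^{(j+1)}_\w\circ\psi\in\LD(\tI)$. By uniqueness of the generalized derivative, this gives $g^{(j)}_\tww\in\L_{\tw_{j+1}}$ with $D_{\tw_{j+1}}g^{(j)}_\tww=(f^{(j+1)}_\w\circ\psi)\,\tw_{j+1}$, so $g\in\L_\tww^{j+1}$ and $g^{(j+1)}_\tww=D_{\tw_{j+1}}g^{(j)}_\tww/\tw_{j+1}=f^{(j+1)}_\w\circ\psi$, closing the induction. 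For part (II), note that $\psi^{-1}\colon I\to\tI$ is again $C^1$ with positive derivative, and a direct computation shows that the transformation \eqref{eq:tw} associated with $\psi^{-1}$ sends $\tww$ back to $\w$; thus part (I) applied to $\psi^{-1}$ shows $g\mapsto g\circ\psi^{-1}$ maps $\L_\tww^j$ into $\L_\w^j$. As $\psi$ is a bijection onto $I$, the maps $f\mapsto f\circ\psi$ and $g\mapsto g\circ\psi^{-1}$ are mutually inverse, proving bijectivity.

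I expect the main obstacle to be the bookkeeping at the right endpoint: verifying that composition with $\psi$ preserves the one-sided-continuity class $\LD$ (in particular handling the subcase $b\in I$, where left-continuity at $b$ must be matched to left-continuity at $\tb$), together with the careful justification of the change of variables for the locally integrable but possibly discontinuous integrand $f^{(j+1)}_\w\,w_{j+1}$.
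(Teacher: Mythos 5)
Your proof is correct and follows essentially the same route as the paper: the paper isolates the single-step identity $C_\psi E_w=E_{\tw}C_\psi$ (proved by exactly your change of variables $u=\psi(\tu)$ plus the $\psi^{-1}$ trick for the reverse inclusion) as a separate commutation lemma and then composes $j$ such steps, whereas you fold that same computation into a direct induction on $j$. All the delicate points you flag -- preservation of the class $\LD$ under composition with the increasing homeomorphism $\psi$, and the identity $K_{\psi^{-1}}(\tw_j)=w_j$ used to get part (II) from part (I) applied to $\psi^{-1}$ -- are handled in the paper in the same way and at the same level of detail.
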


The proof of Proposition~\ref{prop:invar} is based on 

\begin{lemma}\label{lem:invar}
Let $\tI$ and $\psi$ be as in Proposition~\ref{prop:invar}. 
Let $\tilde\BB$ denote the set of all Borel-measurable 
functions in $\R^\tI$, and then let $\tilde\BB^+:=\{\tw\in\tilde\BB\colon\tw>0\}$.    
Define the operators 
\begin{equation}\label{eq:C,K:}
C_\psi\colon\BB\to\tilde\BB\quad{and}\quad K_\psi\colon\BB^+\to\tilde\BB^+ 	
\end{equation}
using the formulas 
\begin{equation}\label{eq:C,K}
	C_\psi f:=f\circ\psi\quad\text{and}\quad K_\psi(w):=(w\circ\psi)\,\psi'. 
\end{equation}
Fix now any $w\in\BB^+$ and let 
\begin{equation}\label{eq:tw:=}
	\tw:=K_\psi(w). 
\end{equation}
Let also 
\begin{alignat}{2}
&	E_w:=R_wD_w,\quad && \text{so that }E_w\colon\L_w\to\BB, \label{eq:E_w} \\ 
&	E_\tw:=R_\tw D_\tw,\quad && \text{so that }E_\tw\colon\L_\tw\to\tilde\BB;  \notag 
\end{alignat}
cf.\ \eqref{eq:E_j}. 
Then one has the following commutation relations: 
\begin{equation}\label{eq:0-comm}
	C_\psi R_w=R_{C_\psi w} C_\psi   
\end{equation}
and
\begin{equation}\label{eq:1-comm}
	C_\psi E_w=E_\tw C_\psi.  
\end{equation}
\end{lemma}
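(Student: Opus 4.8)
The plan is to treat the two commutation relations separately, the first being purely pointwise and the second requiring a change-of-variables argument for the generalized derivative. Throughout, the domains and codomains are not free to choose: they are forced by the careful composition convention \eqref{eq:TS}, so part of the work in each case is bookkeeping of $\dom$ and $\codom$.

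For \eqref{eq:0-comm} I would first settle that bookkeeping. Since $R_w\colon\BB\to\BB$ and $C_\psi\colon\BB\to\tilde\BB$, the composition $C_\psi R_w$ has domain $R_w^{-1}(\BB)=\BB$ and codomain $\tilde\BB$; and since $C_\psi w=w\circ\psi\in\tilde\BB^+$ (positivity and Borel-measurability being inherited through the continuous surjection $\psi$), the operator $R_{C_\psi w}$ is defined on all of $\tilde\BB$, so $R_{C_\psi w}C_\psi$ likewise has domain $\BB$ and codomain $\tilde\BB$. It then remains only to check the pointwise identity: for $f\in\BB$ and $\tx\in\tI$ both sides evaluate to $f(\psi(\tx))/w(\psi(\tx))$. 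Hence \eqref{eq:0-comm} holds.

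For \eqref{eq:1-comm} the heart of the matter is a chain rule for the absolutely continuous representatives underlying $D_w$. Fix $f\in\L_w$, so $f(x)=f(z)+\int_z^x(D_wf)(u)\,\d u$ for all $x,z\in I$. Because $\psi$ is continuously differentiable with $\psi'>0$, it is a $C^1$ diffeomorphism of $\tI$ onto $I$, and the substitution $u=\psi(\tu)$ gives, for all $\tx,\tz\in\tI$,
\begin{equation*}
	(f\circ\psi)(\tx)-(f\circ\psi)(\tz)
	=\int_{\psi(\tz)}^{\psi(\tx)}(D_wf)(u)\,\d u
	=\int_{\tz}^{\tx}\big((D_wf)\circ\psi\big)(\tu)\,\psi'(\tu)\,\d\tu .
\end{equation*}
Thus $C_\psi f$ admits the integral representation with density $h:=(C_\psi D_wf)\,\psi'$. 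To recognize $h$ as the canonical generalized derivative $D_\tw(C_\psi f)$, I would verify $h/\tw\in\LD(\tI)$ (the analogue of \eqref{eq:LD_w} on $\tI$). Using $\tw=(w\circ\psi)\psi'$,
\begin{equation*}
	\frac{h}{\tw}=\frac{\big((D_wf)\circ\psi\big)\psi'}{(w\circ\psi)\psi'}
	=\Big(\frac{D_wf}{w}\Big)\circ\psi=C_\psi(E_wf),
\end{equation*}
and since $E_wf=R_wD_wf\in\LD$ (by $D_wf\in\LD_w$), while $C_\psi$ carries $\LD=\LD(I)$ into $\LD(\tI)$ — the strictly increasing homeomorphism $\psi$ preserving one-sided limits and sending the endpoint $\tb$ to $b$ — we get $h/\tw\in\LD(\tI)$. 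By uniqueness of the generalized derivative this shows $C_\psi f\in\L_\tw$ with $D_\tw(C_\psi f)=h$, and dividing by $\tw$ yields $E_\tw C_\psi f=h/\tw=C_\psi E_w f$, the desired pointwise identity.

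To finish, I must match the domains of the two maps: by \eqref{eq:TS} these are $\L_w$ for $C_\psi E_w$ and $\{f\in\BB\colon C_\psi f\in\L_\tw\}$ for $E_\tw C_\psi$, both with codomain $\tilde\BB$. The inclusion $\L_w\subseteq\{f\colon C_\psi f\in\L_\tw\}$ was just established; for the reverse inclusion I would run the same step for the inverse diffeomorphism $\psi^{-1}$ (again $C^1$ with positive derivative), using the identity $K_{\psi^{-1}}(K_\psi(w))=w$, which follows from $(\psi^{-1})'=1/(\psi'\circ\psi^{-1})$. Applying the forward step to $\psi^{-1}$ and $\tw$ then turns any $g=C_\psi f\in\L_\tw$ into $g\circ\psi^{-1}=f\in\L_w$. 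I expect the main obstacle to be precisely this transfer of the \emph{generalized} derivative under the nonlinear change of scale: one must simultaneously push the absolutely continuous integral representation through the substitution and check that the right-continuity normalization built into $\LD$ is preserved, including the delicate endpoint matching at $b$ and $\tb$; the rest is the domain/codomain bookkeeping forced by \eqref{eq:TS}.
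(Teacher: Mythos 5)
Your proof is correct and follows essentially the same route as the paper: the pointwise/bookkeeping verification of \eqref{eq:0-comm}, the substitution $u=\psi(v)$ in the integral representation \eqref{eq:D_w} to identify $D_\tw(C_\psi f)=((E_wf)\circ\psi)\,\tw$ after checking membership in $\LD(\tI)$, and the reverse domain inclusion obtained by running the forward step for $\psi^{-1}$ together with $K_{\psi^{-1}}(K_\psi(w))=w$. No gaps.
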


\begin{proof}[Proof of Lemma~\ref{lem:invar}]
Identity \eqref{eq:0-comm} follows immediately from the definitions of $C_\psi$ in \eqref{eq:C,K} and $R_w$ in \eqref{eq:R_w}. 

Take now any $f\in\L_w$. 
For brevity, let $\tf:=C_\psi f=f\circ\psi$. 
Take also any $\tx$ and $\tz$ in $\tI$, and let $x:=\psi(\tx)$ and $z:=\psi(\tz)$, so that $x$ and $z$ are in $I$. 
Then, using \eqref{eq:D_w}, the change of the integration variable by the formula $u=\psi(v)$, \eqref{eq:E_w}, \eqref{eq:tw:=}, and \eqref{eq:C,K}, one has 
\begin{equation}\label{eq:5 lines}
\begin{aligned}
	\tf(\tx)-\tf(\tz)
	&=f(x)-f(z) \\ 
	&=\int_z^x(D_w f)(u)\,\d u \\  
	&=\int_\tz^\tx (D_w f)(\psi(v))\, \psi'(v) \,\d v \\ 
	&=\int_\tz^\tx (E_w f)(\psi(v))\,w(\psi(v))\, \psi'(v) \,\d v \\ 
	&=\int_\tz^\tx ((E_w f)\circ\psi)(v)\,\tw(v)\, \d v.  
\end{aligned}
\end{equation}	
Also by \eqref{eq:D_w}, $D_w f\in\LD_w$, so that, by \eqref{eq:E_w} and \eqref{eq:LD_w}, $E_w f\in\LD=\LD(I)$, which yields $(E_w f)\circ\psi\in\LD(\tI)$. 
So, by \eqref{eq:R_w}, $((E_w f)\circ\psi)\,\tw\in\LD_\tw$ and hence, again by \eqref{eq:D_w}, $\tf\in\L_\tw$ and $D_\tw\tf=((E_w f)\circ\psi)\,\tw$. 
Therefore, using again \eqref{eq:E_w}, one concludes that $E_\tw\tf=(E_w f)\circ\psi$. 
Thus, we have established the implication 
\begin{equation*}
	f\in\L_w\implies f\circ\psi=\tf\in\L_\tw\quad \&\quad E_\tw(f\circ\psi)=(E_w f)\circ\psi. 
\end{equation*}
In view of \eqref{eq:E_w}, this implications means that 
\begin{equation}\label{eq:1-comm,half}
	C_\psi E_w\subseteq E_\tw C_\psi=E_{K_\psi(w)} C_\psi;     
\end{equation}
cf.\ \eqref{eq:1-comm}. 
Next, note that 
\begin{equation*}
K_{\psi^{-1}}(\tw)=(\tw\circ\psi^{-1})(\psi^{-1})'
=(w\circ\psi\circ\psi^{-1})\,(\psi'\circ\psi^{-1})\,(\psi^{-1})'
=w.   	
\end{equation*}
Also, it is easy to check that $(C_\psi)^{-1}=C_{\psi^{-1}}$. 
So, the other ``half'' of \eqref{eq:1-comm}, $E_\tw C_\psi\subseteq C_\psi E_w$, can be rewritten as $C_{\psi^{-1}}E_\tw\subseteq E_{K_{\psi^{-1}}(\tw)} C_{\psi^{-1}}$, which follows by \eqref{eq:1-comm,half} if one replaces there $w$ and $\psi$ by $\tw$ and $\psi^{-1}$, respectively. 
This completes the proof of identity \eqref{eq:1-comm}, as well as that of entire Lemma~\ref{lem:invar}. 
\end{proof}

\begin{proof}[Proof of Proposition~\ref{prop:invar}] Take any $j\in\intr0\infty$. Then, 
by \eqref{eq:D^j,E^j}, \eqref{eq:E_w}, \eqref{eq:1-comm}, \eqref{eq:0-comm}, \eqref{eq:tw:=}, \eqref{eq:C,K}, and \eqref{eq:tw}, 
\begin{equation}\label{eq:C E^j = E^j C}
	C_\psi E_\w^j=C_\psi E_{w_j}\cdots E_{w_1}R_{w_0}=
	E_{\tw_j}\cdots E_{\tw_1}R_{\tw_0}C_\psi=E_\tww^j C_\psi.   
\end{equation}
The main idea here is to use successively the commutation relations \eqref{eq:1-comm} ($j$ times) and \eqref{eq:0-comm} (once) to obtain the second equality in \eqref{eq:C E^j = E^j C}. 

Take now any $f\in\L_\w^j$, as in part~(I) of Proposition~\ref{prop:invar}.  
By \eqref{eq:domain} and \eqref{eq:C,K:}, $C_\psi E_\w^j\colon\L_\w^j\to\tilde\BB$. 
So, by \eqref{eq:C E^j = E^j C}, 
\begin{equation}\label{eq:E^j C:}
	E_\tww^j C_\psi\colon\L_\w^j\to\tilde\BB.  
\end{equation}
Therefore, using again \eqref{eq:domain}, one has $f\circ\psi=C_\psi f\in\L_\tww^j$. 
Moreover, 
$(f\circ\psi)^{(j)}_\tww=E_\tww^j C_\psi f=C_\psi E_\w^j f=f^{(j)}_\w\circ\psi$; the second equality here follows again by \eqref{eq:C E^j = E^j C}. 
This verifies part~(I) of Proposition~\ref{prop:invar}. 

The injectivity of the map in part~(II) of Proposition~\ref{prop:invar} is due to the fact that $\psi$ is surjective. 

Finally, take any $g\in\L_\tww^j$. Let $f:=C_{\psi^{-1}}g=g\circ\psi^{-1}$. Then $g=C_\psi f$ and hence $E_\tww^j g =E_\tww^j C_\psi f$. So, by \eqref{eq:E^j C:}, $f\in\L_\w^j$. At that, $f\circ\psi=C_\psi f=g$.  
This verifies the surjectivity of the map in part~(II) of Proposition~\ref{prop:invar}, which completes the proof of the proposition. 
\end{proof}

\begin{remark}\label{rem:invar}
Let us say that two intervals $I$ and $\tI$ are equivalent if they are related via such a map $\psi$ as in Proposition~\ref{prop:invar}. Then there are only four equivalence classes, determined by which of the following four conditions holds: 
(i) $I\cap\{a,b\}=\emptyset$, (ii) $I\cap\{a,b\}=\{a\}$, (iii) $I\cap\{a,b\}=\{b\}$, or (iv) $I\cap\{a,b\}=\{a,b\}$; here $a$ and $b$ are as in \eqref{eq:a,b}. 
%

Therefore, since our main results will all be in terms of the generalized derivatives $f^{(j)}$, it would in principle be enough to assume that $I$ is one of the following four intervals: $\R$, $(-\infty,0]$, $[0,\infty)$, or $[0,1]$. 
We shall use this idea in the 
proof of Proposition~\ref{prop:M} and in Remark~\ref{rem:left}. 
However, most of our considerations will be applicable to all of these four kinds intervals, and so, it would be comparatively inefficient to deal with each kind of intervals separately. 
\end{remark}

\begin{center}
	***
\end{center}

Let $\S^i$ denote the $i$th power of the left-shift operator (say $\S$), so that 
\begin{equation}\label{eq:S}
	\S=\S^1\quad\text{and}\quad
	\S^i\w=\vv,\ \text{where}\ \vv=(v_0,v_1,\dots)=(w_i,w_{i+1},\dots). 
\end{equation}

The following proposition 
allows one to compare the values of two functions on an interval given a comparison between their gauged higher-order derivatives and the same ``initial'' conditions at a point of the interval. 


\begin{proposition}\label{prop:compar}
  Take any $z\in I$ and any $k\in\intr0\infty$. Suppose that functions $f$ and $g$ in $\L^k$ are such that $f^{(j)}(z)=g^{(j)}(z)$ for all $j\in\intr0{k-1}$. 
  Then the inequality $f^{(k)}\ge g^{(k)}$ on $I\cap[z,\infty)$ implies that $f\ge g$ on $I\cap[z,\infty)$. 
  Similarly, the inequality $f^{(k)}\ge g^{(k)}$ on $I\cap(-\infty,z]$ implies that $(-1)^k(f-g)\ge0$ on $I\cap(-\infty,z]$. 
\end{proposition}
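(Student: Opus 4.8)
The plan is to reduce everything to a statement about a single function and then run a short induction on the order of differentiation, using only the integral identity recorded in \eqref{eq:f^(j)more} together with the positivity of the gauges. Set $h:=f-g$. Since the operators $E^j$ are linear and $\L^k$ is a linear space, $f,g\in\L^k$ gives $h\in\L^k$ with $h^{(j)}=f^{(j)}-g^{(j)}$ for all $j\in\intr0k$; hence the hypotheses become $h^{(j)}(z)=0$ for all $j\in\intr0{k-1}$ together with $h^{(k)}\ge0$ on the relevant half of $I$. Because $h=h^{(0)}w_0$ with $w_0>0$, it suffices to control the sign of $h^{(0)}$.

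For the first assertion I would show by downward induction on $j$ that $h^{(j)}\ge0$ on $I\cap[z,\infty)$ for every $j\in\intr0k$. The base case $j=k$ is the hypothesis. For the inductive step, fix $j\in\intr0{k-1}$ with $h^{(j+1)}\ge0$ on $I\cap[z,\infty)$, and for $x\in I\cap[z,\infty)$ write, using \eqref{eq:f^(j)more} and $h^{(j)}(z)=0$,
\[
	h^{(j)}(x)=\int_z^x h^{(j+1)}(u)\,w_{j+1}(u)\,\d u.
\]
Since $x\ge z$ and $w_{j+1}>0$, the integrand is nonnegative on $[z,x]$, so $h^{(j)}(x)\ge0$. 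Taking $j=0$ and multiplying by $w_0>0$ gives $h=h^{(0)}w_0\ge0$, i.e.\ $f\ge g$, on $I\cap[z,\infty)$.

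For the second assertion I would instead track the alternating sign, proving by induction that $(-1)^{k-j}h^{(j)}\ge0$ on $I\cap(-\infty,z]$ for every $j\in\intr0k$. Again the case $j=k$ is the hypothesis. In the inductive step, for $x\in I\cap(-\infty,z]$ one has $x\le z$, so the same identity rewrites, via $\int_z^x=-\int_x^z$, as
\[
	h^{(j)}(x)=-\int_x^z h^{(j+1)}(u)\,w_{j+1}(u)\,\d u.
\]
By the inductive hypothesis $(-1)^{k-j-1}h^{(j+1)}\ge0$ on $[x,z]$; multiplying by $w_{j+1}>0$ and integrating over $[x,z]$ preserves this sign, and the leading minus flips it, so $(-1)^{k-j}h^{(j)}(x)\ge0$. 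For $j=0$ this yields $(-1)^k h^{(0)}\ge0$, and multiplying by $w_0>0$ gives $(-1)^k(f-g)\ge0$ on $I\cap(-\infty,z]$.

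I expect no serious obstacle: the whole content is in the integral identity \eqref{eq:f^(j)more} and in $w_j>0$. The only points deserving a word of care are that $\L^k$ is a linear space, so that $h=f-g$ lies in the common domain and its generalized derivatives are the differences $f^{(j)}-g^{(j)}$ (this is implicit in the linearity of the operators $E^j$ asserted in \eqref{eq:domain}), and the bookkeeping of the alternating sign in the second part, which is forced by the orientation reversal $\int_z^x=-\int_x^z$ when $x\le z$.
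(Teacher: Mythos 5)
Your proof is correct and rests on the same mechanism as the paper's: repeated integration of the generalized derivatives with sign-tracking, using the positivity of the gauge functions and the vanishing of $(f-g)^{(j)}(z)$ for $j\in\intr0{k-1}$. The paper organizes this as an induction on $k$ that peels off the lowest-order operator and passes to the shifted gauge sequence $\S\w$, whereas you run a downward induction on the derivative index $j$ directly from the integral identity in \eqref{eq:f^(j)more}; these are equivalent bookkeepings of the same argument.
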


\begin{proof}[Proof of Proposition~\ref{prop:compar}] 
In view of the recursive definition of $E^j$ 
in \eqref{eq:D^j,E^j}, this proof can be naturally done by induction in $k$. 
If $k=0$ then, in view of \eqref{eq:f^j} and \eqref{eq:^0}, there is nothing to prove. 
Suppose now that $k\in\intr1\infty$. 
Assume that $f^{(k)}\ge g^{(k)}$ on $I\cap(-\infty,z]$. 
Without loss of generality, $g=0$ (otherwise, replace $f$ by $f-g$). 
In view of \eqref{eq:f^j}, \eqref{eq:D^j,E^j}, and \eqref{eq:S}, one has $f^{(j)}=h_{\S\w}^{(j-1)}$ for all $j\in\intr1k$, where 
\begin{equation*}
	h:=D_{w_1}f^{(0)}=D_{w_1}R_{w_0}f. 
\end{equation*}
The conditions (i) $k\in\intr1\infty$, (ii) $f^{(j)}(z)=g^{(j)}(z)$ for all $j\in\intr0{k-1}$, (iii) $f^{(k)}\ge g^{(k)}$ on $I\cap(-\infty,z]$, (iv) $g=0$, and (v) $f^{(j)}=h_{\S\w}^{(j-1)}$ for all $j\in\intr1k$ imply 
\begin{equation}\label{eq:f^0=0}
	f^{(0)}(z)=0, 
\end{equation}
$h_{\S\w}^{(i)}(z)=0$ for all $i\in\intr0{k-2}$, and $h_{\S\w}^{(k-1)}\ge0$ on $I\cap(-\infty,z]$. 
So, by induction, $(-1)^{k-1} h\ge0$ on $I\cap(-\infty,z]$. 
Therefore, in view of \eqref{eq:D_w} and \eqref{eq:f^0=0}, 
\begin{equation*}
	\text{$(-1)^k f^{(0)}(x)=-\int_z^x(-1)^{k-1} D_{w_1}f^{(0)}(u)\d u=-\int_z^x(-1)^{k-1} h(u)\d u\ge0$}
\end{equation*}
for all $x\in I\cap(-\infty,z]$.  
Thus, the part of Proposition~\ref{prop:compar} concerning the interval $I\cap(-\infty,z]$ is proved. 
The part concerning the interval $I\cap[z,\infty)$ is proved quite similarly. 
\end{proof}

\section{\texorpdfstring{$\w$}{ww}-polynomials}\label{poly}

Take any $k\in\intr{-1}\infty$. 

\subsection{\texorpdfstring{$\w$}{ww}-polynomials: basic definitions}\label{poly-defs}

Let us say that a function $p$ is a \emph{$\w$-polynomial of degree $\le k$} (on $I$) if 
$p\in\L^{k+1}$ and $p^{(k+1)}=0$. 
Let us denote the set of all $\w$-polynomials of degree $\le k$ by $\PP^{\le k}$ or, in detailed notation, by $\PP_\w^{\le k}$. 
In particular, $\PP^{\le-1}=\{0\}$. Also, by \eqref{eq:f^j}, for any $k\in\intr0\infty$ 
\begin{equation*}
	\PP^{\le k}=\{p\in\L^k\colon p\in\L^k\text{ and $p^{(k)}$ is a constant}\}. 
\end{equation*}
%
Let 
\begin{equation}\label{eq:P_+}
	\PP_+^{\le k}:=\PP_{\w;+}^{\le k}:=\{p\in\PP^{\le k}\colon p^{(k)}\ge0\}. 
\end{equation}
In particular, $\PP_+^{\le-1}=\PP^{\le-1}=\{0\}$, $\PP^{\le0}=\{cw_0\colon c\in\R\}$, and $\PP_+^{\le0}=\{cw_0\colon c\in[0,\infty)\}$.  
Let then define the set of all $\w$-polynomials of degree $k$ as 
\begin{equation*}
	\text{$\PP^k:=\PP_\w^k:=\PP^{\le k}\setminus\PP^{\le k-1}$ for $k\in\intr0\infty$, with  $\PP^{-1}:=\PP^{\le-1}=\{0\}$. }
\end{equation*}
So, for any $k\in\intr0\infty$ 
\begin{equation*}
	\PP^k=\{p\in\L^k\colon p\in\L^k\text{ and $p^{(k)}$ is a nonzero constant}\}. 
\end{equation*}

In the unit-gauges case, the sets $\PP^k$ and $\PP^{\le k}$ coincide with the sets of usual polynomial functions on $I$ of degree $k$ and of degree $\le k$, respectively. 

\subsection{\texorpdfstring{$\w$}{ww}-polynomials: an interpolation/tangency property}\label{poly-tangency}

The following interpolation/tangency property of the $\w$-polynomials is an extension of the corresponding property of the usual polynomials. 

\begin{proposition}\label{prop:interp}
  For each $z\in I$ and each $(c_0,\dots,c_k)\in\R^{k+1}$ there is a unique $\w$-polynomial $p\in\PP^{\le k}$ such that $p^{(j)}(z)=c_j$ for all $j\in\intr0k$; moreover, this $\w$-polynomial $p$ is in $\LD_{w_0}$ and locally bounded. 
\end{proposition}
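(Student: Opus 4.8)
The plan is to build the required $\w$-polynomial explicitly, specifying its gauged derivatives from the top order downward so as to match the prescribed values at $z$, and then to read off uniqueness from the integral recursion \eqref{eq:f^(j)more}. Recall that $p\in\PP^{\le k}$ means $p\in\L^k$ with $p^{(k)}$ constant. Matching $p^{(k)}(z)=c_k$ therefore forces $p^{(k)}\equiv c_k$, and each lower-order gauged derivative is then determined by integrating upward through \eqref{eq:f^(j)more}; this both suggests the construction and will yield uniqueness.

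For existence I would set $g_k:\equiv c_k$ and define, by downward recursion,
\[
	g_j(x):=c_j+\int_z^x g_{j+1}(u)\,w_{j+1}(u)\,\d u,\qquad j=k-1,\dots,0,\quad x\in I,
\]
and then put $p:=g_0\,w_0$. Since each $w_{j+1}$ is locally bounded and, starting from the constant $g_k$, each $g_{j+1}$ is continuous, the integrand $g_{j+1}w_{j+1}$ is locally bounded, hence locally integrable; thus every $g_j$ is well defined, (absolutely) continuous, and locally bounded, so that $g_j\in\LD$, with $g_j(z)=c_j$ for each $j$.

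The main work is to confirm that this $p$ really lies in $\PP^{\le k}$ with $p^{(j)}=g_j$, which amounts to unwinding the nested membership conditions \eqref{eq:S^j} and the operator definitions \eqref{eq:D^j,E^j} one level at a time. By \eqref{eq:^0} and \eqref{eq:R_w}, $p=g_0w_0\in\BB=\L^0$ and $E^0p=R_{w_0}p=p/w_0=g_0$, so $p^{(0)}=g_0$. Inductively, the defining identity $g_{j-1}(x)=g_{j-1}(z)+\int_z^x g_jw_j$ together with $g_j\in\LD$ shows, via \eqref{eq:D_w} and \eqref{eq:LD_w}, that $g_{j-1}\in\L_{w_j}$ with $D_{w_j}g_{j-1}=g_jw_j\in\LD_{w_j}$; hence $p\in\L^j$ by \eqref{eq:S^j}, and $p^{(j)}=E_jE^{j-1}p=R_{w_j}D_{w_j}g_{j-1}=g_j$. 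At the top level $p^{(k)}=g_k=c_k$ is constant, so $p\in\PP^{\le k}$, while $p^{(j)}(z)=g_j(z)=c_j$ as required. The ``moreover'' claims follow at once: $p/w_0=g_0\in\LD$ gives $p\in\LD_{w_0}$, and $p=g_0w_0$ is locally bounded as a product of locally bounded functions. I expect this inductive bookkeeping --- verifying the $\L_{w_j}$- and $\LD_{w_j}$-memberships at each stage, rather than manipulating the $g_j$ formally --- to be the only delicate point, the local boundedness of the gauges being exactly what makes the integrals and continuity claims valid in the present non-smooth setting.

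For uniqueness, suppose $p,q\in\PP^{\le k}$ both satisfy the interpolation conditions and set $r:=p-q\in\PP^{\le k}$, so that $r^{(j)}(z)=0$ for all $j\in\intr0k$. Then $r^{(k)}$ is constant with $r^{(k)}(z)=0$, whence $r^{(k)}\equiv0$; integrating downward through \eqref{eq:f^(j)more} gives $r^{(j)}\equiv0$ for every $j\in\intr0k$, and in particular $r=r^{(0)}w_0=0$, i.e.\ $p=q$. (Alternatively, Proposition~\ref{prop:compar} applied to $r$ with $r^{(k)}\equiv0$ forces both $r\ge0$ and $r\le0$ on all of $I$.)
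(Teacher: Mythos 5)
Your proof is correct. It rests on the same key insight as the paper's --- a $\w$-polynomial is completely determined by integrating upward from its constant top gauged derivative through the recursion \eqref{eq:f^(j)more} --- but the organization differs. The paper proves the statement by induction on $k$, reducing degree-$k$ interpolation for the gauge sequence $\w$ to degree-$(k{-}1)$ interpolation for the shifted sequence $\S\w$ via $q:=D_{w_1}R_{w_0}p$; existence and uniqueness then come out simultaneously, since the induction hypothesis supplies a unique $q$ and the conditions $D_{w_1}R_{w_0}p=q$, $p(z)=c_0w_0(z)$ pin down $p(x)=w_0(x)\big(c_0+\int_z^x q\big)$. You instead stay with the fixed sequence $\w$, construct all the gauged derivatives $g_k,\dots,g_0$ explicitly by downward recursion, verify $p\in\L^j$ and $p^{(j)}=g_j$ by upward induction through \eqref{eq:S^j} and \eqref{eq:D^j,E^j}, and handle uniqueness by a separate (and equally valid) vanishing argument for the difference. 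Your route is somewhat more explicit and makes the verification of the $\L_{w_j}$- and $\LD_{w_j}$-memberships visible at each level, at the cost of treating existence and uniqueness as two separate arguments; the paper's shift-operator induction is more compact and matches the recursive way the operators $E^j$ were defined. Both are complete; the only point worth making explicit in yours is that the identity $g_{j-1}(x)=g_{j-1}(z)+\int_z^x g_jw_j$ with the fixed base point $z$ extends, by additivity of the integral, to arbitrary pairs of points in $I$ as required by the definition \eqref{eq:D_w} --- a one-line observation, not a gap.
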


\begin{remark}\label{rem:p}
In particular, Proposition~\ref{prop:interp} implies that any $\w$-polynomial is in $\LD_{w_0}$ and locally bounded 
--  because, obviously, for any $p\in\PP^{\le k}$ and any $z\in I$ there is some finite sequence $(c_0,\dots,c_k)\in\R^{k+1}$ such that $p^{(j)}(z)=c_j$ for all $j\in\intr0k$.  
\end{remark}

\begin{proof}[Proof of Proposition~\ref{prop:interp}] 
The proof is naturally done by induction in $k$. 
If $k=-1$, there is almost nothing to prove, because then the set $\intr0k$ is empty and the set $\PP_\w^{\le k}$ is a singleton one, consisting of just one $\w$-polynomial, $0$, which is obviously in $\LD_{w_0}$ and locally bounded. 
Suppose now that $k\in\intr0\infty$. Then 
(cf.\ the proof of Proposition~\ref{prop:compar}) 
the condition that $p\in\PP_\w^{\le k}$ \,\&\, $p_\w^{(j)}(z)=c_j$ for all $j\in\intr0k$ can be rewritten as $p(z)=c_0w_0(z)$ \,\&\, $q\in\PP_{\S\w}^{\le k-1}$ \,\&\, $q_{\S\w}^{(i)}(z)=c_{i+1}$ for all $i\in\intr0{k-1}$, where 
$q:=D_{w_1}p^{(0)}=D_{w_1}R_{w_0}p$. 
By induction, the condition $q_{\S\w}^{(i)}(z)=c_{i+1}$ for all $i\in\intr0{k-1}$ determines a unique $\S\w$-polynomial $q\in\PP_{\S\w}^{\le k-1}$, and this $q$ is in $\LD_{w_1}$ and locally bounded. 
It remains to note that the conditions $D_{w_1}R_{w_0}p=q$ and $p(z)=c_0w_0(z)$ imply that $p(x)=w_0(x)\big(c_0+\int_z^x q(u)\d u\big)$ for all $x\in I$ and thus 
determine a unique $p\in\PP_\w^{\le k}$; moreover, this $p$ is in $\LD_{w_0}$ \big(since $c_0+\int_z^x q(u)\d u$ is continuous in $x$\big) 
and locally bounded 
(since both $w_0$ and $q$ are so). 
\end{proof}

\subsection{A chain of \texorpdfstring{$\w$}{ww}-polynomials vanishing at a point}\label{poly-vanish1}

Take any 
\begin{equation}\label{eq:t in}
	t\in\{a\}\cup I\setminus\{b\}=[a,b). 
\end{equation}
For $j$ and $m$ (in $\intr0\infty$\,) such that $j\le m$, define the functions $p_{t;j,m}\colon I\to(-\infty,\infty]$ recursively by the conditions
\begin{equation}\label{eq:p_tjm}
\begin{aligned}
	p_{t;m,m}(x_m)&=w_m(x_m)\quad\text{for all }x_m\in I; \\ 
	p_{t;j,m}(x_j)&=w_j(x_j)\int_{t+}^{x_j}\d x_{j+1}\,p_{t;j+1,m}(x_{j+1}) 
	\\
 	&
 	\text{\qquad\qquad\qquad for all }x_j\in I\text{\quad if }j<m. 
\end{aligned}	
\end{equation}
In the case when, for a given triple $(t,j,m)$, one has $p_{t;j,m}(x_j)\in\R$ for all $x_j\in I$, let us 
identify $p_{t;j,m}$ with the function whose graph is the same as that of $p_{t;j,m}$ but the codomain is 
$\R$.  
 
Consider first the case when $t\in I$. Then, by the local boundedness of the functions $w_0,w_1,\dots$, the functions $p_{t;j,m}$ are real-valued. Moreover, by induction, 
\begin{equation}\label{eq:p in LD}
p_{t;j,m}\in\LD_{w_j}. 	
\end{equation}
Furthermore, by \eqref{eq:D_j}, 
\begin{equation}\label{eq:Dp}
	p_{t;j+1,m}=D_j p_{t;j,m}\quad\text{and}\quad p_{t;j,m}(t)=0\quad\text{if}\quad j<m. 
\end{equation}
Hence, by \eqref{eq:f^j} and \eqref{eq:D^j,E^j}, 
$p_{t;j,m}$ is an $\S^j\w$-polynomial of degree $m-j$, satisfying the conditions  
\begin{equation}\label{eq:p}
	\big(p_{t;j,m}\big)_{\S^j\w}^{(i)}(t)=0\quad\text{for all $i\in\intr0{m-j-1}$}\quad\text{and }
	\big(p_{t;j,m}\big)_{\S^j\w}^{(m-j)}=1; 
\end{equation}
by Proposition~\ref{prop:interp}, such a polynomial is unique. 
It follows from \eqref{eq:p} that 
\begin{equation*}
	\big(p_{t;j,m}\big)_{\S^j\w}^{(i)}(t)=\ii{i=m-j}\quad\text{for all $i\in\intr0\infty$,} 
\end{equation*}
where $\ii{\cdot}$ denotes the indicator function. 
So, again by Proposition~\ref{prop:interp}, for each $k\in\intr j\infty$, the $\S^j\w$-polynomials $p_{t;j,j},\dots, p_{t;j,k}$ form a basis of the linear space $\PP_{\S^j\w}^{\le k-j}$. 
More specifically, 
each $\S^j\w$-polynomial $p$ of degree $k-j\in\intr0\infty$ can be uniquely represented by a linear combination of the basis $\S^j\w$-polynomials $p_{t;j,j},\dots,p_{t;j,k}$, as follows: 
\begin{equation}\label{eq:basis}
	p=\sum_{i=0}^{k-j}\,p_{\S^j\w}^{(i)}(t)\,p_{t;j,j+i}. 
\end{equation}


Consider now the remaining case $t\notin I$, so that, by the condition \eqref{eq:t in}, $t=a$ and $a\notin I$. Then, since $w_i\in\BB^+$ for all $i\in\intr0\infty$, the function $p_{a;j,m}$ is strictly positive on $I$ but may take the value $\infty$ at some point of the interval $I$; in such a case, it is easy to see that $p_{a;j,m}=\infty$ everywhere on $I$. 
In fact, for each pair $(j,m)\in\intr0\infty\,^2$ such that $j\le m$, one has the following dichotomy: either (i) $p_{a;j,m}=\infty$ everywhere on $I$ or (ii) $p_{a;j,m}$ is in $\PP_{\S^j\w}^{\le k-j}$ and hence locally bounded. 
Introduce the following ``finiteness'' sets for the functions $p_{a;j,m}$:
\begin{equation}\label{eq:F_}
\begin{alignedat}{2}
F&:=F_\w&&:=\big\{(j,m)\in\intr0\infty\,^2\colon j\le m\ \&\ p_{a;j,m}<\infty\big\}, \\ 
	F_{\bullet m}&:=F_{\w;\bullet m}&&:=\big\{j\colon (j,m)\in F\big\}=\big\{j\in\intr0m\colon p_{a;j,m}<\infty\big\}, \\ 
	F_{j\bullet}&:=F_{\w;j\bullet}&&:=\big\{m\colon (j,m)\in F\big\}=\big\{m\in\intr j\infty\colon p_{a;j,m}<\infty\big\}. 
\end{alignedat}		
\end{equation}
In view of \eqref{eq:p_tjm}, 
\begin{equation}\label{eq:j_m}
\text{$F_{\bullet m}=\intr{j_m}m$ for some $j_m\in\intr0m$.} 	
\end{equation}
In particular, $m\in F_{\bullet m}$ and hence $F_{\bullet m}\ne\emptyset$. Similarly, $j\in F_{j\bullet}$ and hence $F_{j\bullet}\ne\emptyset$. 
However, one has the following. 

\begin{proposition}\label{prop:M} 
Suppose that $a\notin I$. 
Then, for any $j\in\intr0\infty$ and 
any given set $M\subseteq\intr{j+1}\infty$, one can construct a sequence $\w=(w_0,w_1,\dots)$ of 
locally bounded 
functions 
in $\BB^+$
such that the set $F_{\w;j\bullet}\setminus\{j\}$ coincides with $M$. 
\end{proposition}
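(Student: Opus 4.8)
The plan is to reduce to a canonical interval, convert the finiteness of the functions $p_{a;j,m}$ into an elementary convergence criterion for iterated integrals at $-\infty$, and then realize the prescribed set $M$ by a greedy ``record'' construction with exponential gauges. First I would invoke Remark~\ref{rem:invar}: since $a\notin I$, the interval $I$ is equivalent, via a map $\psi$ as in Proposition~\ref{prop:invar}, to $\R$ or to $(-\infty,0]$, in either case with left endpoint $-\infty$. Performing the change of variables $u=\psi(v)$ in each of the iterated integrals \eqref{eq:p_tjm} — exactly as in \eqref{eq:5 lines}, and using the relation \eqref{eq:tw} between $\w$ and $\tww$ — shows that $p_{a;j,m}\circ\psi$ is the analogous function built from $\tww$ on $\tI$, so that finiteness is preserved and $F_{j\bullet}$ is invariant under the change of scale. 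Hence I may assume $I=\R$ (the endpoint $-\infty$ is the only one that matters, so $(-\infty,0]$ is handled identically). I also record two simplifications: $p_{a;j,m}$ involves only $w_j,\dots,w_m$, and its finiteness is unaffected by the positive, locally bounded outer factor $w_j$; so I may set $w_0=\dots=w_j=1$ and let only $w_{j+1},w_{j+2},\dots$ carry the construction.

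Next I would specialize to exponential gauges $w_m(x):=e^{c_m x}$ (which lie in $\BB^+$ and are locally bounded) and unwind the recursion \eqref{eq:p_tjm}. Writing $S_i^m:=c_i+\dots+c_m$, a short induction from $i=m$ down to $i=j$ shows that, as long as every integration so far has converged, $p_{a;i,m}$ is a positive multiple of $e^{S_i^m x}$, and the next integration — which produces $p_{a;i-1,m}$ — converges near $-\infty$ if and only if $S_i^m>0$; the moment some $S_i^m\le 0$, the integral of a positive integrand diverges and $p_{a;j,m}\equiv\infty$, no cancellation being possible since all integrands are positive (this matches the dichotomy stated just before \eqref{eq:F_}). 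Setting $P_m:=c_0+\dots+c_m$, so that $S_i^m=P_m-P_{i-1}$, this gives, for $m>j$,
\begin{equation*}
	p_{a;j,m}<\infty
	\iff S_i^m>0\ \text{ for all }i\in\intr{j+1}m
	\iff P_m>\max(P_j,\dots,P_{m-1});
\end{equation*}
that is, $m\in F_{j\bullet}$ exactly when $P_m$ is a strict left-to-right maximum (a ``record'') of the sequence $(P_\ell)_{\ell\ge j}$.

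Finally I would realize $M$ as a record set, which works for arbitrary (possibly infinite) $M\subseteq\intr{j+1}\infty$. Put $P_j:=0$ (consistent with $c_0=\dots=c_j=0$), and for $m>j$ let $R_{m-1}:=\max(P_j,\dots,P_{m-1})$ and set $P_m:=R_{m-1}+1$ if $m\in M$ and $P_m:=R_{m-1}-1$ if $m\notin M$. Then $P_m>R_{m-1}$ precisely when $m\in M$, so the records in $\intr{j+1}\infty$ are exactly $M$. Defining $c_m:=P_m-P_{m-1}$ and $w_m(x):=e^{c_m x}$, the criterion above yields $F_{j\bullet}\cap\intr{j+1}\infty=M$. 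Since $p_{a;j,j}=w_j$ is finite, one has $j\in F_{j\bullet}$, and therefore $F_{\w;j\bullet}\setminus\{j\}=M$, as required.

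The step I expect to be the main obstacle is the reduction in the first paragraph: verifying rigorously that finiteness of the improper iterated integrals $p_{a;j,m}$ — and hence the whole set $F_{j\bullet}$ — is genuinely transported by the (generally nonlinear) change of scale $\psi$. This requires matching the integrals from $\ta+$ with those from $a+$ term by term via \eqref{eq:tw}, as in \eqref{eq:5 lines}, and checking that the $\infty$-valued alternative of the dichotomy is carried across correctly. Once this is secured, the convergence criterion of the second paragraph and the greedy record construction of the third are routine.
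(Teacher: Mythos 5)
Your proposal is correct and follows essentially the same route as the paper's proof: exponential gauges $w_m(x)=e^{c_m x}$, an inductive evaluation of the iterated integrals showing $p_{a;j,m}<\infty$ iff all tail sums $c_i+\dots+c_m$ ($i\in\intr{j+1}m$) are positive, and a recursive choice of the exponents to realize $M$; your ``record'' reformulation via the partial sums $P_m$ is exactly the paper's condition $\la_m+\La_{j,m}>0$ in disguise. The only cosmetic difference is that for $a>-\infty$ the paper writes down the transported gauges $w_j(x)=(x-a)^{\la_j-1}$ explicitly rather than carrying the change of variables through the integrals, which disposes of the step you flagged as the main obstacle.
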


\begin{proof}[Proof of Proposition~\ref{prop:M}]
In the case when $a=-\infty$, for each $j\in\intr0\infty$ take some $\la_j\in\R$ and let $w_j(x_j):=\exp(\la_j x_j)$ for all $x_j\in I$. Then it is not hard to verify by induction in $m-j$ that for all $j$ and $m$ in $\intr0\infty$ such that $j<m$ and for all $x_j\in I$ 
\begin{equation}\label{eq:any M}
\begin{aligned}
	&p_{a;j,m}(x_j) \\ 
	&\qquad =\left\{
	\begin{alignedat}{2}
	&\exp\Big\{\Big(\sum_{i\in\intr jm}\la_i\Big)x_j\Big\}\Big/
	\prod_{i\in\intr j{m-1}}\,\sum_{s\in\intr{i+1}m}\la_s 
\quad&&\text{if }\la_m+
\La_{j,m}>0, \\
	&\infty 
\quad&&\text{otherwise},  
	\end{alignedat}
	\right.
\end{aligned}	
\end{equation}
where 
\begin{equation*}
	\La_{j,m}:=\min_{\rule{0pt}{9pt}i\in\intr j{m-1}}\,\sum_{s\in\intr{i+1}{m-1}
	}\la_s,  
\end{equation*}
with the usual convention that the sum of an empty family is $0$. 

In view of Remark~\ref{rem:invar}, the case when $a>-\infty$ (and hence $a\in\R\setminus I$) can be considered quite similarly. 
In this case, one may let $w_j(x_j):=(x_j-a)^{\la_j-1}$ for all $x_j\in I$. 
Then \eqref{eq:any M} holds for all $x_j\in I
$ if the exponent $\big(\sum_{i\in\intr jm}\la_i\big)x_j$ therein is replaced by $\big(-1+\sum_{i\in\intr jm}\la_i\big)\ln(x_j-a)$. 

So, in either case, whether $a=-\infty$ or $a>-\infty$, for the corresponding constructed sequence $\w$ and any $m\in\intr{j+1}\infty$ one has $m\in F_{\w;j\bullet}\setminus\{j\}$ if and only if $\la_m+
\La_{j,m}>0$. Since for any given $j$ and $m$ the real number $\La_{j,m}$ depends only on $(\la_s)_{s\in\intr{j+1}{m-1}
}$, 
one can choose $\la_m$ recursively in $m\in\intr{j+1}\infty$ so that the finiteness condition $\la_m+
\La_{j,m}>0$ in \eqref{eq:any M} be satisfied if and only if $m$ is in the prescribed subset $M$ of the set $\intr{j+1}\infty$. 
\end{proof}


%
The definitions of $F$, $F_{\bullet m}$, $F_{j\bullet}$, and $j_m$ by formulas \eqref{eq:F_} and \eqref{eq:j_m} continue to make sense even when $a\in I$, and 
\begin{align*}
	&a\in I\\
	&\quad\implies\text{$F=\big\{(j,m)\in\intr0\infty\,^2\colon j\le m\big\}$,\quad $F_{\bullet m}=\intr0m$,\quad $F_{j\bullet}=\intr j\infty$,\quad $j_m=0$.}
\end{align*}

Note also that, for each $j\in F_{\bullet m}=\intr{j_m}m$, the function $p_{a;j,m}$ is an (everywhere positive) $\S^j\w$-polynomial of degree $\le m-j$, whether $a\in I$ or not. 

In the unit-gauges case, for all $j$ and $m$ in $\intr0\infty$ one has (i) $F_{\bullet m}=\intr0m$ and $F_{j\bullet}=\intr j\infty$  if $a>-\infty$ and (ii) $F_{\bullet m}=\{m\}$ and $F_{j\bullet}=\{j\}$ if $a=-\infty$. 

Again for $j$ and $m$ in $\intr0\infty$ such that $j\le m$, define the ``positive parts'' 
of the 
functions $p_{t;j,m}$ by the formula  
\begin{align}\label{eq:p^+}
	p_{t;j,m}^+(x_j):=p_{t;j,m}(x_j)\,\ii{x_j\ge t}
\end{align}
for all $x_j\in I$. Here and subsequently, 
the convention 
\begin{equation}\label{eq:0 infty}
\infty\cdot0=0\cdot\infty=0 	
\end{equation}
is used. 

By \eqref{eq:p^+}, \eqref{eq:p_tjm}, and the positivity of the $w_j$'s, 
\begin{equation}\label{eq:p^+_{t;j,m}>0}
	p^+_{t;j,m}\ge0  
\end{equation}
for all $j$ and $m$ in $\intr0\infty$ such that $j\le m$.  
Also, 
\begin{equation}\label{eq:p^+ in LD}
	p^+_{t;j,m}\in\LD_{w_j}\text{\quad for }(j,m)\in F, 
\end{equation}
since $p_{t;j,m}\in\LD_{w_j}$ for $(j,m)\in F$. 
Moreover, one has 

\begin{lemma}\label{lem:Dp^+} 
Suppose that $j<m$ and $(j,m)\in F$.  Then
\begin{equation}\label{eq:p^+_{t;j+1,m}=}
	p^+_{t;j+1,m}=D_j\,p^+_{t;j,m}. 
\end{equation}
\end{lemma}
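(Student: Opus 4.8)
The plan is to unfold both sides through the definitions of $D_j$ and of the ``positive parts,'' reducing the claim to an elementary integral identity that is then checked by locating $x$ and $z$ relative to $t$. By \eqref{eq:D_j} we have $D_j=D_{w_{j+1}}R_{w_j}$, so in view of the definition of $D_{w_{j+1}}$ in \eqref{eq:D_w} (and the uniqueness of the generalized derivative noted right after it) it suffices to prove two things: that $p^+_{t;j+1,m}\in\LD_{w_{j+1}}$, and that the function $G:=R_{w_j}p^+_{t;j,m}$ satisfies
\[
G(x)=G(z)+\int_z^x p^+_{t;j+1,m}(u)\,\d u\qquad\text{for all }x,z\in I.
\]
Once both hold, the definition of $D_{w_{j+1}}$ forces $D_{w_{j+1}}G=p^+_{t;j+1,m}$, i.e.\ $D_j p^+_{t;j,m}=p^+_{t;j+1,m}$, which is the assertion of Lemma~\ref{lem:Dp^+}.

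First I would record that $(j+1,m)\in F$. Indeed, by \eqref{eq:j_m} one has $F_{\bullet m}=\intr{j_m}m$, and the hypotheses $(j,m)\in F$ and $j<m$ give $j_m\le j<m$, whence $j_m\le j+1\le m$ and $(j+1,m)\in F$. Consequently $p_{t;j+1,m}$ is a genuine locally bounded, real-valued $\S^{j+1}\w$-polynomial and, by \eqref{eq:p^+ in LD}, $p^+_{t;j+1,m}\in\LD_{w_{j+1}}$; this settles the membership requirement and, in particular, guarantees that every integral below converges (the only possible difficulty, an improper lower limit when $t=a\notin I$, is exactly what $(j+1,m)\in F$ excludes).

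Next I would compute $G$ explicitly. By \eqref{eq:R_w}, \eqref{eq:p^+}, and the recursion \eqref{eq:p_tjm},
\[
G(x)=\frac{p^+_{t;j,m}(x)}{w_j(x)}=\ii{x\ge t}\int_{t+}^{x}p_{t;j+1,m}(s)\,\d s,
\]
so that $G$ vanishes for $x<t$ and equals $\int_{t+}^{x}p_{t;j+1,m}$ for $x\ge t$. The displayed identity is then verified by comparing the positions of $x$ and $z$ relative to $t$. When $x,z\ge t$, both indicators equal $1$ and additivity of the integral gives the claim at once; when $x,z<t$, both sides vanish. The only delicate case is when exactly one of $x,z$ lies below $t$: then $I$ contains a point strictly below $t$, which together with $t<b$ forces $a<t<b$ and hence $t\in I$, so $\int_{t+}^{\,\cdot}=\int_{t}^{\,\cdot}$; the indicator inside $p^+_{t;j+1,m}$ truncates the right-hand integral to precisely the part above $t$, matching $G$.

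I expect the main (though modest) obstacle to be this bookkeeping at the threshold $t$: one must confirm that $G$ is genuinely continuous across $t$, its one-sided limits there both being $0$ because $\int_{t+}^{t}p_{t;j+1,m}=0$, so that the indicator introduces no spurious jump and the absolutely continuous representation above is legitimate. This is exactly where the convention $\int_{t+}$ and the conclusion $t\in I$ in the mixed case do the work; everything else is a direct substitution into \eqref{eq:D_w}.
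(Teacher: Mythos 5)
Your proposal is correct and follows essentially the same route as the paper: both reduce the claim, via \eqref{eq:D_j}, \eqref{eq:D_w}, and \eqref{eq:p^+ in LD}, to the integral identity $R_{w_j}p^+_{t;j,m}(x)=R_{w_j}p^+_{t;j,m}(z)+\int_z^x p^+_{t;j+1,m}$, and then verify it by cases on the positions of $x$ and $z$ relative to $t$. Your explicit checks that $(j+1,m)\in F$ and that $t\in I$ in the mixed case are sound and only make explicit what the paper leaves implicit.
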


\begin{proof}[Proof of Lemma~\ref{lem:Dp^+}]
Take any $x_j$ and $z$ in $I$. 
In view of \eqref{eq:D_j}, \eqref{eq:D_w}, \eqref{eq:p^+}, and \eqref{eq:p^+ in LD}, it is enough to show that 
\begin{equation}\label{eq:Dp^+}
	\frac{p_{t;j,m}(x_j)}{w_j(x_j)}\,\ii{x_j\ge t}
	=\frac{p_{t;j,m}(z)}{w_j(z)}\,\ii{z\ge t}
	+\int_z^{x_j}\!\!\d x_{j+1}\,p_{t;j+1,m}(x_{j+1})\,\ii{x_{j+1}\ge t}. 
\end{equation}

In the case when $x_j\ge t$ and $z\ge t$, \eqref{eq:Dp^+} follows by \eqref{eq:D_w} and the first equality in \eqref{eq:Dp}. 

In the case when $x_j\ge t$ and $z<t$, the integral in \eqref{eq:Dp^+} equals \break $\int_t^{x_j}\!\!\d x_{j+1}\,p_{t;j+1,m}(x_{j+1})$, and so, 
\eqref{eq:Dp^+} follows by \eqref{eq:D_w} and the two equalities in \eqref{eq:Dp}. 

The case of $x_j<t$ and $z\ge t$ is quite similar to that of $x_j\ge t$ and $z<t$, as the roles of $x_j$ and $z$ are interchangeable.  

In the case when $x_j<t$ and $z<t$, \eqref{eq:Dp^+} is obvious, as each of the three indicators in \eqref{eq:Dp^+} equals $0$.  
\end{proof}


In the unit-gauges case, for $j<m$ and $x_j\in I$ 
\begin{equation}\label{eq:p_t,unit}
p_{t;j,m}(x_j)=\frac{(x_j-t)^{m-j}}{(m-j)!}\quad\text{and}\quad
p_{t;j,m}^+(x_j)=\frac{(x_j-t)_+^{m-j}}{(m-j)!}	
\end{equation}
if $t\ne-\infty$, and $p_{-\infty;j,m}=p_{-\infty;j,m}^+=\infty$; also, $p_{t;m,m}=1$ and $p_{t;m,m}^+(x_m)=\ii{x_m\ge t}$ for $x_m\in I$.

\subsection{Another chain of \texorpdfstring{$\w$}{ww}-polynomials vanishing at a point}\label{poly-vanish2}
 
Fix an arbitrary 
\begin{equation}\label{eq:z}
	z\in(a,b) 
\end{equation}
and recall \eqref{eq:F_}. 

Take any $(k,j)\in F$ and $i\in\intr0k$, and 
define the functions $p_{a,z;i:k:j}\colon I\to\R
$ by the conditions 
\begin{align}
	p_{a,z;k:k:j}(x_k)&=p_{a;k,j}(x_k)\quad\text{for all }x_k\in I; \label{eq:p_{a,z;k:k:j}} \\ 
	p_{a,z;i:k:j}(x_i)&=w_i(x_i)\int_z^{x_i}\d x_{i+1}\,p_{a,z;i+1:k:j}(x_{i+1})\quad\text{for all }x_i\in I\text{\quad if }i<k; \label{eq:p_{a,z;i:k:j}} 
\end{align}
then $p_{a,z;i:k:j}\in\PP_{\S^i\w}^{j-i}$. 

Indeed, by \eqref{eq:F_}, \eqref{eq:p_{a,z;k:k:j}}, and \eqref{eq:p_{a,z;i:k:j}}, the functions $p_{a,z;i:k:j}$ are nonnegative and finite. Also, by \eqref{eq:D_j}, 
\begin{equation}\label{eq:D_i p}
D_i\,p_{a,z;i:k:j}=p_{a,z;i+1:k:j}\text{\quad if\quad }i<k,  	
\end{equation}
and hence $D_{\S^i\w}^{k-i}\,p_{a,z;i:k:j}=p_{a,z;k:k:j}=p_{a;k,j}$. 
Moreover, \big(cf.\ \eqref{eq:p}\big) $\big(p_{a;k,j}\big)_{\S^k\w}^{(j-k)}=1$ and hence $\big(p_{a;k,j}\big)_{\S^k\w}^{(j-k+1)}=0$, 
\begin{equation}\label{eq:=1}
\big(p_{a,z;i:k:j}\big)_{\S^i\w}^{(j-i)}=1,  	
\end{equation}
and $\big(p_{a,z;i:k:j}\big)_{\S^i\w}^{(j-i+1)}=0$, which indeed yields 
\begin{equation}\label{eq:p_{a,z;i:k:j} in}
p_{a,z;i:k:j}\in\PP_{\S^i\w}^{j-i}.  	
\end{equation} 

In the unit-gauges case, for all $i\in\intr0k$ and $x_i\in I$,  
\begin{equation}\label{eq:p_a,unit,1}
	p_{a,z;i:k:j}(x_i)=\frac1{(j-i)!}\,
	\Big[(x_i-a)^{j-i}-\sum_{\ga=0}^{k-i-1}\binom{j-i}\ga\,(z-a)^{j-i-\ga}(x_i-z)^\ga\Big]
\end{equation}
if $a>-\infty$ and $j\in\intr k\infty$,  
and 
\begin{equation}\label{eq:p_a,unit,2}
	p_{a,z;i:k:k}(x_i)=\frac{(x_i-z)^{k-i}}{(k-i)!}     
\end{equation} 
whether $a=-\infty$ or $a>-\infty$. 
Recall here that the generalized polynomials $p_{a,z;i:k:j}$ were defined for $(k,j)\in F$ and $i\in\intr0k$; recall also that, in the unit-gauges case, $F=\{(j,m)\in\intr0\infty\,^2\colon j\le m\}$ if $a>-\infty$ and $F=\{(m,m)\colon m\in\intr0\infty\}$ if $a=-\infty$. 
Besides, in the case when $a>-\infty$   
\begin{equation*}
	p_{a,z;i:k:j}(x_i)\underset{z\downarrow a}\longrightarrow\frac{(x_i-a)^{j-i}}{(j-i)!} 
\end{equation*}
for all $i\in\intr0k$ and $x_i\in I$.

\section{Convex cones of generalized multiply monotone (g.m.m.) functions}\label{cones}

\subsection{Convex cones \texorpdfstring{$\H_+^{i:n}$}{H} of g.m.m.\ functions}\label{H_+}
 
Let 
$\M_+$ denote the set of all nonnegative measures $\mu$ defined on 
the Borel $\si$-algebra over $I$ such that $\mu(I\cap\{b\})=0$. 

For $j\in\intr0n$, $\mu\in\M_+$, and $x\in I$, let 
\begin{equation}\label{eq:h_{j;mu}:=}
	h_{j;\mu}(x):=h_{j:n;\mu}(x):=\smallint_I\;\mu(\d t)\,p_{t;j,n}^+(x), 
\end{equation}
so that $h_{j;\mu}(x)\in[0,\infty]$. 
Note that, if $\mu\in\M_+$ is such that $h_{j;\mu}(x)<\infty$ for all $x\in I$, then one has a function $h_{j;\mu}\colon I\to\R$. 

For each $i\in\intr0n$, let $\H_+^{i:n}$ denote the set of all functions $h\colon I\to\R$ such that (i) $h(x)=h_{i;\mu}(x)$ for some $\mu\in\M_+$ and all $x\in I$ and (ii) $h_{j;\mu}(x)<\infty$ for all $j\in\intr in$ and all $x\in I$.


\begin{lemma}\label{lem:h_{i;mu}}
Take any $i\in\intr0n$ and any $\mu\in\M_+$ such that 
$h_{i;\mu}\in\H_+^{i:n}$. 
Then $h_{i;\mu}\in\LD_{w_i}$. 
Also, the function $h_{i;\mu}/w_i$ is nondecreasing and hence 
locally bounded. 
Moreover, if $i\in\intr0{n-1}$, then $h_{i;\mu}\in\L_{w_{i+1}}$ and  
\begin{equation}\label{eq:Dh_{i;mu}}
	D_i h_{i;\mu}=h_{i+1;\mu}. 
\end{equation} 
\end{lemma}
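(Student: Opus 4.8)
The plan is to prove each assertion of Lemma~\ref{lem:h_{i;mu}} essentially by interchanging integration (over $t$) with the operations defining the $\w$-derivatives, relying on the fact that the finiteness hypothesis $h_{j;\mu}\in\H_+^{i:n}$ guarantees all the integrals that appear are finite. I would proceed in three stages, following the three claims of the statement.

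First I would show that $h_{i;\mu}/w_i$ is nondecreasing on $I$. By the definition \eqref{eq:h_{j;mu}:=} and \eqref{eq:p^+}, one has $h_{i;\mu}(x)/w_i(x)=\int_I\mu(\d t)\,p_{t;i,n}^+(x)/w_i(x)$, and for each fixed $t$ the function $x\mapsto p_{t;i,n}^+(x)/w_i(x)=\big(p_{t;i,n}(x)/w_i(x)\big)\ii{x\ge t}$ is nondecreasing: indeed, for $x<t$ it is $0$, while for $x\ge t$ it equals $\int_{t+}^{x}\d x_{i+1}\,p_{t;i+1,n}(x_{i+1})$ by \eqref{eq:p_tjm}, which is nondecreasing in $x$ because the integrand $p_{t;i+1,n}$ is nonnegative \big(cf.\ \eqref{eq:p^+_{t;j,m}>0}\big). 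Since each integrand is nondecreasing and $\mu$ is a nonnegative measure, the integral $h_{i;\mu}/w_i$ is nondecreasing in $x$, and being monotone and finite \big(by the hypothesis $h_{i;\mu}\in\H_+^{i:n}$\big) it is locally bounded.

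Next, to get $h_{i;\mu}\in\LD_{w_i}$ I must verify, via \eqref{eq:LD_w}, that $h_{i;\mu}/w_i\in\LD$, i.e.\ that it is right-continuous on $I\setminus\{b\}$ and left-continuous at $b$ if $b\in I$. A monotone function is automatically one-sidedly continuous except at jumps, but the precise side of continuity must be tracked. I would establish right-continuity at each $x\in I\setminus\{b\}$ by dominated (or monotone) convergence applied to $\int_I\mu(\d t)\,p_{t;i,n}^+(x')/w_i(x')$ as $x'\downarrow x$, using that $p^+_{t;i,n}(\cdot)/w_i(\cdot)\in\LD_{w_i}$ by \eqref{eq:p^+ in LD} so each integrand is itself right-continuous in its argument, and that the monotone increments are controlled by the finiteness of $h_{i+1;\mu}$. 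The left-continuity at $b$ (when $b\in I$) is handled symmetrically. This interchange-of-limit-and-integral step is where I expect the main technical care to be needed, since one must justify passing the one-sided limit inside the $\mu$-integral uniformly enough; the monotone structure and the standing finiteness hypotheses are exactly what make this work.

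Finally, for $i\in\intr0{n-1}$ I would prove $h_{i;\mu}\in\L_{w_{i+1}}$ together with the recursion \eqref{eq:Dh_{i;mu}}. By \eqref{eq:D_w} it suffices to show that $h_{i;\mu}(x)=h_{i;\mu}(z)+\int_z^x h_{i+1;\mu}(u)\,w_{i+1}(u)\,\d u$ for all $x,z\in I$, since $h_{i+1;\mu}\in\LD_{w_{i+1}}$ by the already-treated first part applied with $i+1$ in place of $i$, which identifies $D_i h_{i;\mu}=h_{i+1;\mu}$. Starting from $\int_I\mu(\d t)\,p^+_{t;i,n}(x)$ and using Lemma~\ref{lem:Dp^+}, namely $p^+_{t;i+1,n}=D_i\,p^+_{t;i,n}$, one has for each $t$ the pointwise identity $p^+_{t;i,n}(x)=p^+_{t;i,n}(z)+\int_z^x p^+_{t;i+1,n}(u)\,w_{i+1}(u)\,\d u$ by \eqref{eq:D_w} and \eqref{eq:D_j}. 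Integrating against $\mu(\d t)$ and applying Tonelli's theorem to swap the order of integration \big(legitimate because all integrands are nonnegative by \eqref{eq:p^+_{t;j,m}>0}, and the resulting integrals are finite since $h_{i;\mu}$ and $h_{i+1;\mu}$ are finite by the hypothesis $h_{i;\mu}\in\H_+^{i:n}$\big) yields precisely the desired integral representation, completing the proof.
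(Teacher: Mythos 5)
Your first two parts (monotonicity of $h_{i;\mu}/w_i$, and membership in $\LD_{w_i}$ via dominated convergence after discarding the $\mu$-null set $I\cap\{b\}$) follow the paper's argument essentially verbatim and are fine. The third part, however, rests on two identities that are not correct for general gauge functions, and this is where the step as written would fail. By \eqref{eq:D_j}, $D_i=D_{w_{i+1}}R_{w_i}$, so by \eqref{eq:D_w} the relation $D_ih_{i;\mu}=h_{i+1;\mu}$ means
\begin{equation*}
	\frac{h_{i;\mu}(x)}{w_i(x)}=\frac{h_{i;\mu}(z)}{w_i(z)}+\int_z^x h_{i+1;\mu}(u)\,\d u,
\end{equation*}
i.e.\ the left-hand side must be divided by $w_i$ and the integrand carries \emph{no} factor $w_{i+1}$ (that factor is already built into $h_{i+1;\mu}$ through $p_{t;i+1,n}=w_{i+1}\int_{t+}^{\cdot}p_{t;i+2,n}$; the $h_{j;\mu}$ are analogues of $D^jf=f^{(j)}w_j$, not of the gauged derivatives $f^{(j)}$ appearing in \eqref{eq:f^(j)more}). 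The same correction applies to your ``pointwise identity'': what Lemma~\ref{lem:Dp^+} (i.e.\ \eqref{eq:Dp^+}) actually gives is
\begin{equation*}
	\frac{p^+_{t;i,n}(x)}{w_i(x)}=\frac{p^+_{t;i,n}(z)}{w_i(z)}+\int_z^x p^+_{t;i+1,n}(u)\,\d u,
\end{equation*}
whereas the version you wrote, $p^+_{t;i,n}(x)=p^+_{t;i,n}(z)+\int_z^x p^+_{t;i+1,n}(u)\,w_{i+1}(u)\,\d u$, cannot hold in general: its right-hand side is absolutely continuous in $x$, while $p^+_{t;i,n}=w_i\cdot(\cdots)$ inherits the possible discontinuities of the merely Borel-measurable gauge $w_i$. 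Likewise, the conclusion should be $h_{i;\mu}/w_i\in\L_{w_{i+1}}$ (equivalently $h_{i;\mu}\in R_{w_i}^{-1}(\L_{w_{i+1}})=\dom D_i$), which is what the displayed relation delivers.

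Once these identities are corrected, the remainder of your argument — integrate the pointwise identity against $\mu(\d t)$ and swap the order of integration by Tonelli, using nonnegativity \eqref{eq:p^+_{t;j,m}>0} and the finiteness guaranteed by $h_{i;\mu}\in\H_+^{i:n}$, then identify $D_ih_{i;\mu}$ with $h_{i+1;\mu}$ using $h_{i+1;\mu}\in\LD_{w_{i+1}}$ from the first part applied at level $i+1$ — is exactly the paper's proof. So the error is a misreading of the definitions \eqref{eq:D_w} and \eqref{eq:D_j} rather than a wrong strategy, but as written the key display in your third paragraph is false and must be replaced by its $w_i$-normalized form.
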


\begin{proof}[Proof of Lemma~\ref{lem:h_{i;mu}}] 
Take any $t\in I\setminus\{b\}$. 
By \eqref{eq:p_tjm}, the function $p_{t;i,n}/w_i$ is nonnegative and nondecreasing on the interval $I\cap[t,\infty)$. 
So, by \eqref{eq:p^+}, the function $p_{t;i,n}^+/w_i$ is nonnegative and nondecreasing on the interval $I$. 
Moreover, by \eqref{eq:p in LD}, \eqref{eq:p^+}, and the condition $t\in I\setminus\{b\}$, one has $p_{t;i,n}^+\in\LD_{w_i}$. 
Next, 
\begin{equation*}
	\frac{h_{i;\mu}(x_i)}{w_i(x_i)}=\int_I\mu(\d t)\,\frac{p_{t;i,n}^+(x_i)}{w_i(x_i)}
	=\int_{I\setminus\{b\}}\mu(\d t)\,\frac{p_{t;i,n}^+(x_i)}{w_i(x_i)} 
\end{equation*}
for all $x_i\in I$, because $\mu\in\M_+$ and hence $\mu(I\cap\{b\})=0$. 
So, by dominated convergence, the condition $p_{t;i,n}^+\in\LD_{w_i}$ for $t\in I\setminus\{b\}$ implies $h_{i;\mu}\in\LD_{w_i}$.  
It also follows that $h_{i;\mu}/w_i$ is nondecreasing and hence 
locally bounded. 

Now suppose that $i\in\intr0{n-1}$. Then $h_{i+1;\mu}\in\H_+^{i+1:n}$, and so, $h_{i+1;\mu}\in\LD_{w_{i+1}}$. 
Also, by \eqref
{eq:h_{j;mu}:=}, \eqref
{eq:Dp^+}, and \eqref{eq:D_w}, 
for 
any $x_i$ and $z$ in $I$ 
\begin{equation*}
\begin{aligned}
	\int_z^{x_i}\d x_{i+1}\,h_{i+1;\mu}(x_{i+1})
	&=\int_z^{x_i}\d x_{i+1}\,\int_I\mu(\d t)\,p_{t;i+1,n}^+(x_{i+1}) \\ 
	&=\int_I\mu(\d t)\int_z^{x_i}\d x_{i+1}\,\,p_{t;i+1,n}^+(x_{i+1}) \\ 
	&=\int_I\mu(\d t)
	\Big(\frac{p_{t;i,n}^+(x_i)}{w_i(x_i)}-\frac{p_{t;i,n}^+(z)}{w_i(z)}\Big) \\ 
&	=\frac{h_{i;\mu}(x_i)}{w_i(x_i)}-\frac{h_{i;\mu}(z)}{w_i(z)},
\end{aligned}	  
\end{equation*}
which verifies \eqref{eq:Dh_{i;mu}} and thus completes the proof of Lemma~\ref{lem:h_{i;mu}}. 
\end{proof}

\subsection{Convex cones \texorpdfstring{$\F_+^{k:n}$}{F} of g.m.m.\ functions}\label{F_+}

Recall condition \eqref{eq:k le n+1}. 
Recall also \eqref{eq:f^j} and \eqref{eq:supseteq}, and introduce the class of functions 
\begin{equation}\label{eq:F}
	\F_+^{k:n}:=\F_+^{k:n}(I):=
	\big\{f\in\L^n\colon\text{$f^{(j)}$ is nondecreasing for each $j\in\intr{k-1}n$\,}\big\}. 
\end{equation}

Clearly, $\F_+^{k:n}$ is a convex cone. 
For instance, in the unit-gauges case $\F_+^{1:0}$ is the cone of all nondecreasing 
functions in $\R^I$, $\F_+^{1:1}$ is the cone of all nondecreasing continuous convex functions in $\R^I$, and $\F_+^{2:1}$ is the cone of all continuous convex functions in $\R^I$.
Also in the unit-gauges setting, special cases of 
the cones $\F_+^{k:n}$ (or similar to them) and cones in a sense dual to those cones were considered, more or less explicitly, in a number of papers, including the following: \cite{eaton1,utev-extr,pin94} for $(k,n)=(4,3)$; \cite{bent-liet02,bent-ap} (dealing with the cone $\H_+^{0:2}$; cf.\ Proposition~\ref{prop:H in F} in the present paper, below); \cite{normal} (dealing with the cone $\H_+^{0:5}$; cf.\ Theorem~\ref{th:bernoulli} in the present paper); \cite{klein-ma-priv} for $(k,n)=(2,2)$; 
\cite{asymm} for $(k,n)=(1,3)$; \cite{pin-hoeff-arxiv-reftoAIHP} for $n\in\{2,3\}$ and $k\in\intr1n$; \cite{left-arxiv} for $(k,n)\in\{(1,2),(1,3)\}$; \cite{pin-hoeff-published} (dealing with the cone $\H_+^{0:3}$). 

\begin{proposition}\label{prop:H in F}
$\H_+^{0:n}\subseteq\F_+^{k:n}$.   
\end{proposition}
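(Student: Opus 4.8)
The plan is to peel off one gauged derivative at a time, relying entirely on Lemma~\ref{lem:h_{i;mu}}, and to prove by induction on $i$ that any $h\in\H_+^{0:n}$ already lies in $\L^i$ with $D^i h=h_{i;\mu}$, where $\mu\in\M_+$ is the measure representing $h=h_{0;\mu}$. First I would fix such an $h$ and $\mu$ and record that, since the defining finiteness condition of $\H_+^{0:n}$ forces $h_{j;\mu}(x)<\infty$ for all $j\in\intr0n$ and all $x\in I$, one has $h_{i;\mu}\in\H_+^{i:n}$ for every $i\in\intr0n$ (the finiteness condition for $h_{i;\mu}$ only constrains the orders $j\in\intr in\subseteq\intr0n$). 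Consequently Lemma~\ref{lem:h_{i;mu}} is available at each level $i\in\intr0n$.

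For the induction itself, the base case $i=0$ is immediate: by \eqref{eq:^0} and the Lemma, $h=h_{0;\mu}\in\LD_{w_0}\subseteq\BB=\L^0$ and $D^0 h=h=h_{0;\mu}$. For the step I would assume $i\in\intr0{n-1}$, $h\in\L^i$, and $D^i h=h_{i;\mu}$. Since $E^i=R_{w_i}D^i$ by \eqref{eq:D^j,E^j}, this gives $E^i h=R_{w_i}h_{i;\mu}=h_{i;\mu}/w_i$. Lemma~\ref{lem:h_{i;mu}} provides $D_i h_{i;\mu}=h_{i+1;\mu}$ with $D_i=D_{w_{i+1}}R_{w_i}$; the mere fact that $D_i h_{i;\mu}$ is well defined forces $R_{w_i}h_{i;\mu}=E^i h\in\L_{w_{i+1}}$, which is exactly the condition in the recursive description \eqref{eq:S^j} of $\L^{i+1}$ needed to pass from $h\in\L^i$ to $h\in\L^{i+1}$. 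Finally, the recursion $D^{i+1}=D_i D^i$ of \eqref{eq:D^j,E^j} yields $D^{i+1}h=D_i h_{i;\mu}=h_{i+1;\mu}$, closing the induction. In particular $h\in\L^n$, and by \eqref{eq:f^j} one has $h^{(i)}=E^i h=h_{i;\mu}/w_i$ for every $i\in\intr0n$.

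To finish I would invoke the monotonicity half of Lemma~\ref{lem:h_{i;mu}}: for each $j\in\intr{k-1}n$ — a subset of $\intr0n$, since $k\ge1$ by \eqref{eq:k le n+1} — the function $h^{(j)}=h_{j;\mu}/w_j$ is nondecreasing. By the definition \eqref{eq:F} of $\F_+^{k:n}$, the combination of $h\in\L^n$ with the nondecreasingness of $h^{(j)}$ for $j\in\intr{k-1}n$ says precisely that $h\in\F_+^{k:n}$, which gives the desired inclusion $\H_+^{0:n}\subseteq\F_+^{k:n}$.

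The only delicate point I anticipate is the implication $h\in\L^i\Rightarrow h\in\L^{i+1}$, because $\L^{i+1}$ is defined recursively through membership of the gauged derivative $E^i h$ in the domain $\L_{w_{i+1}}$ of $D_{w_{i+1}}$. All the genuine analysis behind this — that $h_{i;\mu}/w_i$ is right-continuous, locally bounded, and absolutely continuous with generalized derivative $h_{i+1;\mu}$, obtained by differentiating under the integral sign in \eqref{eq:h_{j;mu}:=} — is already packaged into Lemma~\ref{lem:h_{i;mu}}, so what remains is essentially bookkeeping with the recursive definitions \eqref{eq:S^j}, \eqref{eq:D^j,E^j}, and \eqref{eq:f^j}.
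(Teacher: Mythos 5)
Your proof is correct and follows essentially the same route as the paper's: both rest entirely on Lemma~\ref{lem:h_{i;mu}} together with the recursive definitions \eqref{eq:S^j} and \eqref{eq:D^j,E^j}. The only difference is presentational — you make explicit the induction showing $h\in\L^n$ and $D^i h=h_{i;\mu}$, which the paper leaves implicit in its citation of \eqref{eq:Dh_{i;mu}}, and you invoke the monotonicity clause of the lemma uniformly for all $i\in\intr0n$ where the paper derives nondecreasingness of $h^{(i)}$ for $i\le n-1$ from nonnegativity of $h^{(i+1)}$ and treats $i=n$ by a separate explicit computation.
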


\begin{proof}[Proof of Proposition~\ref{prop:H in F}]
Take any $h\in\H_+^{0:n}$, so that 
$h=h_{0;\mu}$ for some \break 
$\mu\in\M_+$. 
By \eqref
{eq:h_{j;mu}:=} and \eqref{eq:p^+_{t;j,m}>0}, $h_{i;\mu}\ge0$ for each $i\in\intr0n$. 
So, by \eqref{eq:f^j} and \eqref{eq:Dh_{i;mu}}, 
$h_{0;\mu}^{(i)}=\big(D^i h_{0;\mu}\big)/w_i=h_{i;\mu}/w_i\ge0$ for each $i\in\intr0n$. 
Hence, 
$h_{0;\mu}^{(i)}$ is nondecreasing for each $i\in\intr0{n-1}$. 
Also, in view of \eqref
{eq:h_{j;mu}:=}, \eqref{eq:p^+}, and \eqref{eq:p_tjm}, for each $x\in I$ 
\begin{equation*}
	h_{0;\mu}^{(n)}(x)=\frac{h_{n;\mu}(x)}{w_n(x)}
	=\frac1{w_n(x)}\,\int_I\;\mu(\d t)\,p_{t;n,n}^+(x)=\mu\big(I\cap(-\infty,x]\big), 
\end{equation*}
which is nondecreasing in $x$. 
Thus, by \eqref{eq:F}, $h=h_{0;\mu}\in\F_+^{k:n}$. 
\end{proof}

Important bounding properties for the functions in the class $\F_+^{k:n}$ are given by 

\begin{proposition}\label{prop:bounds}
Take any $f\in\F_+^{k:n}$ and any $z\in I$. 
\begin{enumerate}[label=\emph{(\Roman*)}]
	\item There exists a $\w$-polynomial $p\in\PP^{\le k-1}$ 
such that 
\begin{enumerate}[label=\emph{(\roman*)}]
	\item if $k$ is even, then $f\ge p$ on the interval $I$; 
	\item if $k$ is odd, then 
	\begin{enumerate}
	\item[\emph{(*)}] $f\ge p$ on the interval $I\cap[z,\infty)$; 
	\item[\emph{(**)}] $f\le p$ on the interval $I\cap(-\infty,z]$.  
\end{enumerate}
\end{enumerate}
Moreover, one may assume that this $\w$-polynomial $p$ 
depends on $f$ and $z$ only via the values of $f^{(0)}(z),\dots,f^{(k-1)}(z)$.
	\item If $k\le n$, then there exists a $\w$-polynomial $q\in\PP^{\le k}_+$ 
such that 
\begin{enumerate}[label=\emph{(\roman*)}]
	\item if $k$ is odd, then $f\ge q$ on the interval $I$; 
	\item if $k$ is even, then 
	\begin{enumerate}
	\item[\emph{(*)}] $f\ge q$ on the interval $I\cap[z,\infty)$; 
	\item[\emph{(**)}] $f\le q$ on the interval $I\cap(-\infty,z]$.  
\end{enumerate}
\end{enumerate}
Moreover, one may assume that this $\w$-polynomial $q$ 
depends on $f$ and $z$ only via the values of $f^{(0)}(z),\dots,f^{(k)}(z)$. 
\end{enumerate} 
\end{proposition}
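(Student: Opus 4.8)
The plan is to produce both $\w$-polynomials by Hermite-type interpolation of the generalized derivatives of $f$ at the single point $z$, and then to convert the built-in monotonicity of the appropriate generalized derivative of $f$ into the claimed one-sided bounds by a direct application of the comparison Proposition~\ref{prop:compar}. The even/odd dichotomy in the statement is not something to be argued separately in each parity; it will fall out automatically from the sign factor $(-1)^k$ that Proposition~\ref{prop:compar} attaches to its left-interval conclusion. Existence and uniqueness of the interpolants is handed to me by Proposition~\ref{prop:interp}, and the ``depends only on $f^{(0)}(z),\dots$'' clauses are immediate from the interpolation data.

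For Part~(I), I would let $p$ be the unique $\w$-polynomial in $\PP^{\le k-1}$ furnished by Proposition~\ref{prop:interp} through the conditions $p^{(j)}(z)=f^{(j)}(z)$ for $j\in\intr0{k-1}$; by construction $p$ depends on $f$ and $z$ only through $f^{(0)}(z),\dots,f^{(k-1)}(z)$. Since $p\in\PP^{\le k-1}$, the top derivative $p^{(k-1)}$ is the constant $f^{(k-1)}(z)$, so $(f-p)^{(k-1)}=f^{(k-1)}-f^{(k-1)}(z)$. Because $f\in\F_+^{k:n}$ forces $f^{(k-1)}$ to be nondecreasing, this difference is $\ge0$ on $I\cap[z,\infty)$ and $\le0$ on $I\cap(-\infty,z]$. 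Both $f$ and $p$ lie in $\L^{k-1}$ (recall $k-1\le n$), and $(f-p)^{(j)}(z)=0$ for $j\in\intr0{k-2}$, so Proposition~\ref{prop:compar} applies at order $k-1$: its right-interval part gives $f\ge p$ on $I\cap[z,\infty)$, while its left-interval part, applied to the pair $(p,f)$, gives $(-1)^{k-1}(p-f)\ge0$, i.e.\ $(-1)^k(f-p)\ge0$, on $I\cap(-\infty,z]$. Reading off the cases $k$ even and $k$ odd then yields exactly (i) and (ii).

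For Part~(II), where $k\le n$, I would instead take $q\in\PP^{\le k}$ with $q^{(j)}(z)=f^{(j)}(z)$ for $j\in\intr0k$, again unique by Proposition~\ref{prop:interp} and depending on $f,z$ only through $f^{(0)}(z),\dots,f^{(k)}(z)$. The genuinely new ingredient is to verify that $q\in\PP_+^{\le k}$, i.e.\ that the constant $q^{(k)}=f^{(k)}(z)$ is nonnegative. This I would deduce from the monotonicity of $f^{(k-1)}$: the integral representation in \eqref{eq:f^(j)more} gives $f^{(k-1)}(x)-f^{(k-1)}(z)=\int_z^x f^{(k)}(u)\,w_k(u)\,\d u$, so nondecreasingness of $f^{(k-1)}$ together with $w_k>0$ forces $f^{(k)}\ge0$ a.e., and the one-sided continuity built into $f^{(k)}\in\LD$ upgrades this to $f^{(k)}\ge0$ everywhere, in particular at $z$. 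Then, exactly as in Part~(I) but now at order $k$ and using that $f^{(k)}$ is nondecreasing, Proposition~\ref{prop:compar} gives $f\ge q$ on $I\cap[z,\infty)$ and $(-1)^k(q-f)\ge0$ on $I\cap(-\infty,z]$. Since the comparison is now at order $k$ instead of $k-1$, the sign factor is $(-1)^k$ in place of $(-1)^{k-1}$, which interchanges the even and odd cases relative to Part~(I) and produces precisely (i) and (ii) there.

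I expect the only delicate point to be the nonnegativity $f^{(k)}\ge0$ needed to land $q$ in the cone $\PP_+^{\le k}$, since this is where one must pass from the plain monotonicity of $f^{(k-1)}$ to a genuine pointwise sign on $f^{(k)}$, leaning on both $w_k>0$ and the right-/left-continuity encoded in $\LD$. Everything else is essentially bookkeeping: the interpolants are supplied by Proposition~\ref{prop:interp}, and the conversion of derivative-monotonicity into two-sided bounds is a direct invocation of Proposition~\ref{prop:compar}, with the parity assertions emerging from the sign factor $(-1)^{k-1}$ (resp.\ $(-1)^k$).
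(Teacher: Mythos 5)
Your proposal is correct and follows essentially the same route as the paper: both parts take the interpolating $\w$-polynomial from Proposition~\ref{prop:interp} matching $f^{(0)}(z),\dots,f^{(k-1)}(z)$ (resp.\ $f^{(0)}(z),\dots,f^{(k)}(z)$), use the constancy of the top generalized derivative of the interpolant together with the monotonicity of $f^{(k-1)}$ (resp.\ $f^{(k)}$ and its nonnegativity when $k\le n$), and then invoke Proposition~\ref{prop:compar}, with the parity cases falling out of the sign factor $(-1)^{k-1}$ versus $(-1)^k$. Your extra elaboration of why $f^{(k)}\ge0$ (via the integral representation, $w_k>0$, and the one-sided continuity from $\LD$) is just a spelled-out version of what the paper asserts directly from \eqref{eq:f^j}.
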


\begin{proof}[Proof of Proposition~\ref{prop:bounds}]\ 
By Proposition~\ref{prop:interp}, there exists a unique $\w$-poly\-nomial $p\in\PP^{\le k-1}$ such that $p^{(i)}(z)=f^{(i)}(z)$ for all $i\in\intr0{k-1}$; moreover, then the condition $p\in\PP^{\le k-1}$ implies that $p^{(k-1)}(x)=p^{(k-1)}(z)=f^{(k-1)}(z)$ for all $x\in I$. 
On the other hand, the condition $f\in\F_+^{k:n}$ 
implies that the function $f^{(k-1)}$ is nondecreasing. 
Therefore,  
$f^{(k-1)}\ge p^{(k-1)}$ on the interval $I\cap[z,\infty)$, and  
$f^{(k-1)}\le p^{(k-1)}$ on the interval $I\cap(-\infty,z]$. 
To complete the proof of part~(I) of Proposition~\ref{prop:bounds}, it remains to recall Proposition~\ref{prop:compar}. 

Part~(II) of Proposition~\ref{prop:bounds} is proved similarly, by letting $q$ be the unique $\w$-poly\-nomial in $\PP^{\le k}$ such that $p^{(i)}(z)=f^{(i)}(z)$ for all $i\in\intr0k$. 
Here the additional condition $k\le n$ \big(together with 
the condition $f\in\F_+^{k:n}$\big) implies, in view of \eqref{eq:f^j}, that the function $f^{(k)}$ is nonnegative, and so, $q^{(k)}(x)=q^{(k)}(z)=f^{(k)}(z)\ge0$ for all $x\in I$.  
Therefore and because $q\in\PP^{\le k}$, it follows that 
$q\in\PP_+^{\le k}$. 
Moreover, since $f^{(k)}$ is nondecreasing, it follows that $f^{(k)}\ge q^{(k)}$ on the interval $I\cap[z,\infty)$, and  
$f^{(k)}\le q^{(k)}$ on the interval $I\cap(-\infty,z]$. 
\end{proof}

\subsection{Generalized Taylor expansion at the left endpoint \texorpdfstring{$a$}{a} of the interval \texorpdfstring{$I$}{I} of the generalized derivatives \texorpdfstring{$f^{(j)}$}{fj} for \texorpdfstring{$j\in\intr kn$}{j in} of a function \texorpdfstring{$f$}{f} in \texorpdfstring{$\F_+^{k:n}$}{F}
}\label{taylor,f^j}

Take any $f\in\F_+^{k:n}$. 
It follows by \eqref{eq:f^j} that 
\begin{equation}\label{eq:f^j(a+)}
\text{for each}\  j\in\intr kn,\quad 
\left\{ 
\begin{aligned}
	&f^{(j)}\ \text{is nonnegative and nondecreasing, and so,} \\ 
	&\text{there exists a limit}\ f^{(j)}(a+) 
	\in[0,\infty). 
\end{aligned}	
\right.
\end{equation}

Now one state the following generalized Taylor expansion. 
\begin{lemma}\label{lem:taylor-k}
For all $j\in\intr kn$
\begin{equation}\label{eq:taylor}
\begin{gathered}
	f^{(j)}\,w_j=p_j+h_j, \quad\text{where} \\ 
	p_j:=\sum_{i\in\intr jn}f^{(i)}(a+)\,p_{a;j,i}
	\quad\text{and}\quad
	h_j:=\int_{I
	}\d f^{(n)}(t)\,p_{t;j,n}^+. 
\end{gathered}
\end{equation}
\end{lemma}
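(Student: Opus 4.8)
The plan is to prove \eqref{eq:taylor} by downward induction on $j$, from $j=n$ down to $j=k$, exploiting that the operator $D_j$ carries the level-$j$ identity to the level-$(j+1)$ one. For the base case $j=n$ everything is explicit: by \eqref{eq:p_tjm} one has $p_{a;n,n}=w_n$, so $p_n=f^{(n)}(a+)\,w_n$, while $p_{t;n,n}^+(x)=w_n(x)\,\ii{x\ge t}$ gives $h_n(x)=w_n(x)\,\mu\big(I\cap(-\infty,x]\big)$ with $\mu:=\d f^{(n)}$. Since $f^{(n)}$ is nondecreasing and right-continuous, $\mu\big(I\cap(-\infty,x]\big)=f^{(n)}(x)-f^{(n)}(a+)$, and adding $p_n$ recovers $f^{(n)}\,w_n$. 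Here $\mu\in\M_+$ because $f^{(n)}$ is left-continuous at $b$, so $h_n=h_{n;\mu}$ in the notation of \eqref{eq:h_{j;mu}:=}.

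For the inductive step (with $k\le j<n$) I would first record three differentiation identities. By \eqref{eq:f^j} and \eqref{eq:D^j,E^j}, $f^{(j)}w_j=D^jf$, whence $D_j(f^{(j)}w_j)=D^{j+1}f=f^{(j+1)}w_{j+1}$. Next, since $p_{a;j,j}=w_j$ satisfies $D_jp_{a;j,j}=D_jw_j=0$ while $D_jp_{a;j,i}=p_{a;j+1,i}$ for $i>j$ by \eqref{eq:p_tjm} (cf.\ \eqref{eq:Dp}), one gets $D_jp_j=p_{j+1}$. Finally, identifying $h_j$ with $h_{j;\mu}$ and invoking Lemma~\ref{lem:h_{i;mu}} (equivalently, carrying $D_j$ under the integral sign by Lemma~\ref{lem:Dp^+}) yields $D_jh_j=h_{j+1}$. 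Combining these with the inductive hypothesis $f^{(j+1)}w_{j+1}=p_{j+1}+h_{j+1}$ shows $D_j\big(f^{(j)}w_j-p_j-h_j\big)=0$, so that $\big(f^{(j)}w_j-p_j-h_j\big)/w_j$ is a constant $c_j$. It then remains to prove $c_j=0$, which I would do by letting $x\downarrow a$ in $f^{(j)}(x)-p_j(x)/w_j(x)-h_j(x)/w_j(x)=c_j$: the limit of $f^{(j)}$ is $f^{(j)}(a+)$, finite by \eqref{eq:f^j(a+)}; the ratio $p_j/w_j$ tends to $f^{(j)}(a+)$ because $p_{a;j,j}/w_j\equiv1$ and $p_{a;j,i}/w_j\to0$ for $i>j$; and $h_j/w_j$ tends to $0$.

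The main obstacle is to make sense of $p_j$ and $h_j$ as \emph{finite} functions and to justify the two limits above when $a\notin I$, in particular when $a=-\infty$, where the classical Taylor expansion is unavailable. The crucial auxiliary fact is that $f^{(i)}(a+)=0$ whenever $(j,i)\notin F$, i.e.\ whenever the generalized monomial $p_{a;j,i}$ is identically $+\infty$; under the convention \eqref{eq:0 infty} this guarantees that $p_j$ is finite and isolates exactly the degrees that survive near $a$. I would establish it by contradiction: if $f^{(i)}(a+)=c>0$, then $f^{(i)}\ge c$ on $I$ (as $f^{(i)}$ is nonnegative and nondecreasing for $i\in\intr kn$), and comparing $f^{(j)}$ on $I\cap[y,\infty)$ with the $\S^j\w$-polynomial matching its lower $\S^j\w$-derivatives at $y$ augmented by $c\,p_{y;j,i}$, Proposition~\ref{prop:compar} gives $f^{(j)}(x)\ge c\,p_{y;j,i}(x)$ there; letting $y\downarrow a$ forces $f^{(j)}(x)=+\infty$, contradicting $f\in\L^n$.

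The same comparison, together with the monotonicity and nonnegativity of $h_j/w_j$ from Lemma~\ref{lem:h_{i;mu}} and monotone (or dominated) convergence controlled by the finite mass $\mu\big(I\cap(-\infty,x]\big)=f^{(n)}(x)-f^{(n)}(a+)$, is what I would use both to verify that $\mu=\d f^{(n)}$ yields $h_{k;\mu}\in\H_+^{k:n}$ (so that all the $h_j$ are finite and Lemma~\ref{lem:h_{i;mu}} applies) and to prove $h_j(x)/w_j(x)\to0$ as $x\downarrow a$: for each fixed $t$ the integrand $p_{t;j,n}^+(x)/w_j(x)$ decreases to $0$, the contribution of $t$ near $a$ being suppressed precisely by $f^{(n)}(a+)$ being finite (indeed $0$ when $(j,n)\notin F$). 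I expect this convergence argument at the infinite endpoint, rather than the algebra of the induction, to be where the real work lies.
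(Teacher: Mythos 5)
Your architecture is genuinely different from the paper's: you apply $D_j$ to the level-$j$ identity and then pin down the resulting constant of integration by letting $x\downarrow a$, whereas the paper integrates the level-$j$ identity from $a+$ to $x$, so that the new coefficient $f^{(j-1)}(a+)$ appears automatically and no separate limit at $a$ has to be evaluated. Within your scheme, the base case, the three identities $D_j(f^{(j)}w_j)=f^{(j+1)}w_{j+1}$, $D_jp_j=p_{j+1}$, $D_jh_j=h_{j+1}$, the claim that $f^{(i)}(a+)=0$ whenever $p_{a;j,i}\equiv\infty$ (your iterated-comparison argument via Proposition~\ref{prop:compar} does prove this), and the limit $p_{a;j,i}(x)/w_j(x)\to0$ for $(j,i)\in F$, $i>j$, are all correct.

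The gap is the finiteness of $h_j$, which your route needs \emph{as an input} at two places: before you may invoke Lemma~\ref{lem:Dp^+} or Lemma~\ref{lem:h_{i;mu}} to write $D_jh_j=h_{j+1}$ (these apply only to finite $h_{j;\mu}\in\H_+^{j:n}$, and without finiteness $h_j/w_j$ need not even lie in $\L_{w_{j+1}}$), and before you may run the decreasing monotone-convergence argument for $h_j(x)/w_j(x)\to0$ as $x\downarrow a$. Your stated control --- ``monotone convergence controlled by the finite mass $\mu\big(I\cap(-\infty,x]\big)$'' --- does not suffice: the kernel $t\mapsto p_{t;j,n}(x)$ is in general unbounded on $(a,x]$, since it increases to $p_{a;j,n}(x)=+\infty$ whenever $(j,n)\notin F$, which is exactly the situation of interest when $a=-\infty$ (e.g.\ the unit-gauges case). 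Finite total mass of $\d f^{(n)}$ near $a$ therefore does not by itself give $h_j(x)<\infty$. The natural repair is to first prove the generalized Taylor expansion of $f^{(j)}$ anchored at a \emph{finite} point $y\in I$, namely $f^{(j)}(x)\,w_j(x)=\sum_{i\in\intr jn}f^{(i)}(y)\,p_{y;j,i}(x)+\int_{(y,x]}\d f^{(n)}(t)\,p_{t;j,n}(x)$; its boundary terms are nonnegative, so $\int_{(y,x]}\d f^{(n)}(t)\,p_{t;j,n}(x)\le f^{(j)}(x)\,w_j(x)$ uniformly in $y$, and letting $y\downarrow a$ gives $h_j(x)<\infty$. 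But that expansion is itself an induction of exactly the type at issue, and once you have it the differentiate-and-evaluate-the-constant detour becomes redundant. The paper avoids the problem altogether by performing the downward induction with Fubini and monotone convergence in $[0,\infty]$: it shows $\int_{a+}^{x}h_j=h_{j-1}/w_{j-1}$ and then reads off the finiteness of $h_{j-1}$ from the finiteness of $f^{(j-1)}(x)-f^{(j-1)}(a+)$ in the resulting identity, rather than requiring it beforehand.
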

The integral in \eqref{eq:taylor} is understood in the ``pointwise'' sense, 
so that $h_j(x_j)=\int_I\d f^{(n)}(t)\,p_{t;j,n}^+(x_j)$ for all $x_j\in I$;  
the latter integral exists (in $[0,\infty]$), since $p_{t;j,n}^+\ge0$ and the function $f^{(n)}$ is nondecreasing. 

 
\begin{proof}[Proof of Lemma~\ref{lem:taylor-k}]
This is done by downward induction in $j$, starting with $j=n$. Indeed, by the definitions of $p_j$ in \eqref{eq:taylor} and of $p_{t;j,m}$ in \eqref{eq:p_tjm}, 
\begin{equation*}
p_n=f^{(n)}(a+)\,w_n. 	
\end{equation*}
By the definitions of $h_j$ in \eqref{eq:taylor} and of $p_{t;j,m}^+$ in \eqref{eq:p^+}, for all $x_n\in I$ 
\begin{equation}
	h_n(x_n)=\int_{I
	}\d f^{(n)}(t)\,w_n(x_n)\ii{x_n\ge t}
	=\big(f^{(n)}(x_n)-f^{(n)}(a+)\big)\,w_n(x_n);  
\end{equation}
here we also used the fact that $f^{(n)}\in\LD$, which was noted in \eqref{eq:f^(j)more}. 
So, the equality $f^{(j)}\,w_j=p_j+h_j$ holds for $j=n$. 
Suppose now this equality holds for some $j\in\intr{k+1}n$. 
It remains to show that then this equality holds with $j-1$ instead of $j$. 
By \eqref{eq:f^j} 
and the induction assumption, 
for all $x_{j-1}\in I$ 
\begin{align}\label{eq:=J1+J2}
	f^{(j-1)}(x_{j-1})-f^{(j-1)}(a+)  
	=\int_{a+}^{x_{j-1}}\d x_j\,f^{(j)}(x_j)\,w_j(x_j)  = J_1(x_{j-1})+J_2(x_{j-1}), 
\end{align}
where 
\begin{align}
&\begin{aligned}
	J_1(x_{j-1})&:=\int_{a+}^{x_{j-1}}\d x_j\,p_j(x_j) \\ 
	&=\sum_{i\in\intr jn}f^{(i)}(a+)\,\int_{a+}^{x_{j-1}}\d x_j\,p_{a;j,i}(x_j) \\ 
	&=\sum_{i\in\intr jn}f^{(i)}(a+)\,\frac{p_{a;j-1,i}(x_{j-1})}{w_{j-1}(x_{j-1})}
	=\frac{p_{j-1}(x_{j-1})}{w_{j-1}(x_{j-1})} -f^{(j-1)}(a+)    
\end{aligned} \label{eq:J_1}
\\ 
\intertext{by \eqref{eq:taylor} and \eqref{eq:p_tjm}, whereas}  
&\begin{aligned}
	J_2(x_{j-1})&:=\int_{a+}^{x_{j-1}}\d x_j\,h_j(x_j) \\ 
	&=\int_{a+}^{x_{j-1}}\d x_j\,\int_{I
	}\d f^{(n)}(t)\,p^+_{t;j,n}(x_j) \\ 
	&=\int_{I
	}\d f^{(n)}(t)\,\int_{a+}^{x_{j-1}}\d x_j\,p_{t;j,n}(x_j)\,\ii{x_j\ge t} \\ 
	&=\int_{I
	}\d f^{(n)}(t)\,\int_{t+}^{x_{j-1}}\d x_j\,p_{t;j,n}(x_j)\,\ii{x_{j-1}\ge t} \\ 
	&=\int_{I
	}\d f^{(n)}(t)\,\frac{p_{t;j-1,n}(x_{j-1})}{w_{j-1}(x_{j-1})}\,\ii{x_{j-1}\ge t} \\ 
	&=\int_{I
	}\d f^{(n)}(t)\,\frac{p^+_{t;j-1,n}(x_{j-1})}{w_{j-1}(x_{j-1})}
	=\frac{h_{j-1}(x_{j-1})}{w_{j-1}(x_{j-1})}   
\end{aligned}	\label{eq:J_2} 
\end{align}
by \eqref{eq:taylor}, \eqref{eq:p^+_{t;j,m}>0}, the Fubini theorem, \eqref{eq:p^+}, \eqref{eq:p_tjm}, again \eqref{eq:p^+}, and again \eqref{eq:taylor}. 
Now \eqref{eq:=J1+J2}, \eqref{eq:J_1}, and \eqref{eq:J_2} indeed yield $f^{(j-1)}w^{(j-1)}=p^{(j-1)}+h^{(j-1)}$. 
\end{proof}

Take any $j\in\intr kn$, as in Lemma~\ref{lem:taylor-k}.  
Since 
$p_{a;j,i}\ge0$ for $i\in\intr jn$,  
it follows from Lemma~\ref{lem:taylor-k} and \eqref{eq:f^j(a+)} that the values of the function $h_j$ 
are all in $[0,\infty)$, i.e., are finite and nonnegative. 
It also follows, in view of \eqref{eq:F_}, 
that necessarily 
\begin{equation}\label{eq:f^j(-infty)=0}
	f^{(i)}(a+)=0\quad\text{for all}\quad i\in\intr jn\,\setminus F_{j\bullet}
	=\intr jn\,\setminus F_{\bullet n}. 
\end{equation}
Moreover, in view of \eqref{eq:taylor} and \eqref{eq:0 infty}, 
\begin{equation}\label{eq:p_k=}
	p_j=\sum_{i\in F_{j,n}}f^{(i)}(a+)\,p_{a;j,i},  
\end{equation}
where 
\begin{equation}\label{eq:F_kn}
F_{i,n}:=F_{\w;i,n}:=\intr in\,\cap F_{i\bullet}=\intr in\,\cap F_{\bullet n}=\intr{j_n\vee i}n    	
\end{equation}
for all $i\in\intr0\infty$,  
with $j_n$ defined according to \eqref{eq:j_m}. 
 
In the unit-gauges case, 
\begin{equation}\label{eq:F_kn,unit}
	F_{i,n}=
	\left\{
	\begin{aligned}
	\intr in&\text{\quad if }a>-\infty \text{ and }i\le n, \\
	\{i\}&\text{\quad if }a=-\infty \text{ and }i\le n,  \\  
	\emptyset&\text{\quad if }
	i>n,   
	\end{aligned}
	\right.
\end{equation}
and
\eqref{eq:taylor} becomes the almost usual Taylor expansion \big(of the function $f^{(j)}$ ``at the point $a+$''\big) given by the formula 
\begin{equation}\label{eq:taylor-unit}
	f^{(j)}(x_j)=\sum_{i\in\intr jn}f^{(i)}(a+)\,\frac{(x_j-a)^{i-j}}{(i-j)!}
	+\int_{I
	}\d f^{(n)}(t)\frac{(x_j-t)_+^{n-j}}{(n-j)!}    
\end{equation}
for $j\in\intr kn$ and $x_j\in I$. Here in the case when $a=-\infty$ one necessarily has $f^{(i)}(a+)=0$ for all $i\in\intr{j+1}n$ \big(cf.\ \eqref{eq:f^j(-infty)=0}\big), and then the sum in \eqref{eq:taylor-unit} reduces simply to $f^{(j)}(-\infty)$. 
For simplicity, we write $g(-\infty)$ in place of $g\big((-\infty)+\!\big)$, for any function $g$. 

Note that the set $\intr kn$ is empty if $k=n+1$, and then \eqref{eq:f^j(a+)}, Lemma~\ref{lem:taylor-k}, and \eqref{eq:taylor-unit} become vacuous. However, the definition of $p_j$ in \eqref{eq:taylor} and the expression of $p_j$ in \eqref{eq:p_k=} make sense even for $j=n+1$, 
if one uses the standard convention that the sum of any empty family is $0$, so that 
\begin{equation}\label{p_{n+1}=0}
	p_{n+1}=0. 
\end{equation}

\subsection{Truncation of the generalized Taylor expansion at the point \texorpdfstring{$a$}{a} of the generalized derivative \texorpdfstring{$f^{(k)}$}{fk} of a function \texorpdfstring{$f$}{f} in \texorpdfstring{$\F_+^{k:n}$}{F}
}\label{trunc-taylor,f^k} 

Recall \eqref{eq:z}. 
Take then any 
\begin{equation}\label{eq:y}
	y\in(a,z], 
\end{equation}
recall \eqref{eq:F_kn}, 
and introduce the function 
\begin{equation}\label{eq:tg_y}
	\tg_y:=\tg_{z,y}:=
	\sum_{j\in F_{k,n}}f^{(j)}(a+)\,p_{a,z;0:k:j}+h_{0,y}, 
\end{equation}
where $p_{a,z;0:k:j}$ is understood according to \eqref{eq:p_{a,z;k:k:j}}--\eqref{eq:p_{a,z;i:k:j}} and 
\begin{equation}\label{eq:h_i,y}
h_{i,y}:=\int_{I\cap[y,\infty)}\d f^{(n)}(t)\,p_{t;i,n}^+      
\end{equation}
for $i\in\intr0n$. 
The latter integral \big(understood in the ``pointwise'' sense, similarly to the integral expressing $h_j$ in \eqref{eq:taylor}\big) exists (in $[0,\infty]$), again because $p_{t;j,n}^+\ge0$ and the function $f^{(n)}$ is nondecreasing. 
In fact, 
$h_{i,y}(x_i)=\int_{I\cap[y,\infty)}\d f^{(n)}(t)\,p_{t;i,n}^+(x_i)\break 
=\int_{[y,y\vee x_i]}\d f^{(n)}(t)\,p_{t;i,n}(x_i)<\infty$ for all $x_i\in I$ -- because, by \eqref{eq:p_tjm} and the local boundedness of the functions $w_0,w_1,\dots$, the expression $p_{t;i,n}(x_i)$ is locally bounded in $t\in I$ for each $x_i\in I$ \big(actually, $p_{t;i,n}(x_i)$ is locally bounded in $(t,x_i)\in I^2$\big). 
So, 
in view of \eqref{eq:h_{j;mu}:=}, 
\begin{equation}\label{eq:h in}
	h_{i,y}=h_{i;\mu_{n,y}}\in\H_+^{i:n}, 
\end{equation}
where the measure $\mu_{n,y}\in\M_+$ is defined by the condition that $\mu_{n,y}\big(I\cap(-\infty,x]\big)\break =f^{(n)}(x\vee y)-f^{(n)}(y)$ for all $x\in I$; 
note here that $\mu_{n,y}(I\cap\{b\})=\mu_{n,y}(\{b\})\break
=f^{(n)}(b)-f^{(n)}(b-)=0$ if $b\in I$, since $f^{(n)}\in\LD$ by \eqref{eq:f^(j)more}, and trivially $\mu_{n,y}(I\cap\{b\})=\mu_{n,y}(\emptyset)=0$ if $b\notin I$. 
So, in either case, $\mu_{n,y}(I\cap\{b\})=0$. 

Moreover, Lemma~\ref{lem:h_{i;mu}} immediately yields 

\begin{lemma}\label{lem:h_{i,y}}
Take any $i\in\intr0n$. Then $h_{i,y}\in\LD_{w_i}$. 
Also, the function $h_{i;y}/w_i$ is nondecreasing and hence 
locally bounded. 
Moreover, if $i\in\intr0{n-1}$, then $h_{i,y}\in\L_{w_{i+1}}$ and 
\begin{equation}\label{eq:Dh_{i,y}}
	D_i h_{i,y}=h_{i+1,y}. 
\end{equation} 
\end{lemma}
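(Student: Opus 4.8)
The plan is to recognize that $h_{i,y}$ is nothing but a particular instance of the function $h_{i;\mu}$ studied in Subsection~\ref{H_+}, and then to read off every assertion of the lemma from Lemma~\ref{lem:h_{i;mu}}. Concretely, I would start from the identity \eqref{eq:h in}, which records that $h_{i,y}=h_{i;\mu_{n,y}}$ for the measure $\mu_{n,y}\in\M_+$ whose cumulative behaviour is $\mu_{n,y}\big(I\cap(-\infty,x]\big)=f^{(n)}(x\vee y)-f^{(n)}(y)$. The membership $\mu_{n,y}\in\M_+$, i.e.\ $\mu_{n,y}(I\cap\{b\})=0$, has already been checked in the paragraph preceding the lemma, using $f^{(n)}\in\LD$.

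Next, I would verify that the hypothesis of Lemma~\ref{lem:h_{i;mu}} is met, namely that $h_{i;\mu_{n,y}}\in\H_+^{i:n}$. This is exactly what the computation surrounding \eqref{eq:h_i,y} provides: for every index $j\in\intr0n$ one has $h_{j,y}(x_j)=\int_{[y,\,y\vee x_j]}\d f^{(n)}(t)\,p_{t;j,n}(x_j)<\infty$ for all $x_j\in I$, since $p_{t;j,n}(x_j)$ is locally bounded in $t$ and $f^{(n)}$ is nondecreasing. In particular $h_{j;\mu_{n,y}}=h_{j,y}$ is real-valued for all $j\in\intr in$, so $h_{i;\mu_{n,y}}\in\H_+^{i:n}$ by the defining conditions of that cone.

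With these two points in place, I would simply apply Lemma~\ref{lem:h_{i;mu}} with $\mu:=\mu_{n,y}$. This instantly delivers $h_{i;\mu_{n,y}}\in\LD_{w_i}$, the monotonicity (hence local boundedness) of $h_{i;\mu_{n,y}}/w_i$, and, when $i\in\intr0{n-1}$, both $h_{i;\mu_{n,y}}\in\L_{w_{i+1}}$ and $D_i h_{i;\mu_{n,y}}=h_{i+1;\mu_{n,y}}$. Translating back through $h_{i,y}=h_{i;\mu_{n,y}}$ and $h_{i+1,y}=h_{i+1;\mu_{n,y}}$ turns these into the four claimed assertions, the last being $D_i h_{i,y}=h_{i+1,y}$, which is \eqref{eq:Dh_{i,y}}.

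There is essentially no genuine obstacle here: all the analytic content -- absolute continuity, the monotonicity of the ratio, and the identity relating consecutive generalized derivatives -- was already carried out once and for all in Lemma~\ref{lem:h_{i;mu}}. The only thing requiring attention is the bookkeeping of confirming that the specific measure $\mu_{n,y}$ really lies in $\M_+$ and that $h_{i;\mu_{n,y}}$ really lies in $\H_+^{i:n}$, and both of these were dispatched in the preparatory paragraph, so the lemma follows as a direct corollary.
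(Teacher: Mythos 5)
Your proposal is correct and coincides with the paper's own argument: the paper derives the lemma as an immediate consequence of Lemma~\ref{lem:h_{i;mu}} applied to $\mu=\mu_{n,y}$, using exactly the preparatory facts you cite (that $\mu_{n,y}\in\M_+$ via $f^{(n)}\in\LD$, and that $h_{j,y}=h_{j;\mu_{n,y}}$ is finite for all $j$, so $h_{i,y}\in\H_+^{i:n}$ as recorded in \eqref{eq:h in}). Nothing further is needed.
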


Now combine \eqref{eq:tg_y}, \eqref{eq:f^j}, \eqref{eq:D^j,E^j}, \eqref{eq:D_i p}, \eqref{eq:p_{a,z;k:k:j}}, \eqref{eq:p_k=}, and \eqref{eq:Dh_{i,y}} to conclude that   
\begin{equation}\label{eq:taylor_y}
\begin{gathered}
	(\tg_y)^{(k)}\,w_k=p_k+h_{k,y}     
\end{gathered}
\end{equation}
\big(here one may want to recall that in the case when $k=n+1$ one has $\tg_y=h_{0,y}$ and, by \eqref{p_{n+1}=0}, $p_k=0$\big). 
Similarly \big(but using \eqref{eq:p_{a,z;i:k:j}} instead of \eqref{eq:p_{a,z;k:k:j}}\big), one can  also observe that 
\begin{equation}\label{eq:tg^i=h^i}
(\tg_y)^{(i)}(z)=h_{i,y}(z)\quad\text{for all }i\in\intr0{k-1},   	
\end{equation}
since $p_{a,z;i:k:j}(z)=0$ if $i<k$. 

Recalling again that $f^{(n)}\in\LD$, one has $\int_{I\cap\{a\}}\d f^{(n)}(t)\,p_{t;i,n}^+=0$. 
So, on comparing \eqref{eq:taylor_y} with \eqref{eq:taylor}, one concludes that 
\begin{equation}\label{eq:tg down}
(\tg_y)^{(k)}\near{y\downarrow a}f^{(k)}
\end{equation}
(pointwise, on $I$). 
%

\subsection{Lifting the truncated generalized Taylor expansion of \texorpdfstring{$f^{(k)}$}{fk} to an approximation \texorpdfstring{$g_y\in\PP_+^{k:n}+\H_+^{0:n}$}{gy in} of a function \texorpdfstring{$f\in\F_+^{k:n}$}{f in}
}\label{lift}

In accordance with Proposition~\ref{prop:interp}, let $q_{k;z,y}$ be the unique $\w$-polynomial  in $\PP^{\le k-1}$ such that 
\begin{equation*}
	\text{$(q_{k;z,y})^{(i)}(z)=(f-\tg_y)^{(i)}(z)$\quad for all $i\in\intr0{k-1}$.} 
\end{equation*}

Let now 
\begin{equation}\label{eq:g_y}
	g_y=g_{z,y}:=q_{k;z,y}+\tg_y. 
\end{equation}
Then 
\begin{equation}\label{eq:g_y^j}
	\text{$(g_y)^{(i)}(z)=f^{(i)}(z)$ for all $i\in\intr0{k-1}$}
\end{equation}
and 
\begin{equation}\label{eq:g^k=tg^k}
(g_y)^{(k)}=(\tg_y)^{(k)}, 	
\end{equation}
so that, by \eqref{eq:tg down}, 
\begin{equation}\label{eq:g down}
	(g_y)^{(k)}\near{y\downarrow a}f^{(k)}. 
\end{equation}
In view of 
\eqref{eq:tg_y}, 
one can rewrite \eqref{eq:g_y} as 
\begin{equation}\label{eq:g_y=}
	g_y=P_{z,y}+R_{z,y},  
\end{equation}
where 
\begin{equation}\label{eq:P=,R=}
	P_{z,y}:=q_{k;z,y}+\sum_{j\in F_{k,n}}f^{(j)}(a+)\,p_{a,z;0:k:j}\quad\text{and}\quad
	R_{z,y}:=h_{0,y}. 
\end{equation} 

Take any $j\in F_{k,n}\subseteq\intr kn$. 
By \eqref{eq:D^j,E^j}, \eqref{eq:D_i p}, and \eqref{eq:p_{a,z;k:k:j}}, $D^k\,p_{a,z;0:k:j}=p_{a,z;k:k:j}=p_{a;k,j}$. 
Therefore and by \eqref{eq:f^j} and \eqref{eq:Dp}, for each $s\in\intr kj$ one has 
$p_{a,z;0:k:j}^{(s)}=p_{a;s,j}/w_s$, which is nonnegative and nondecreasing, by \eqref{eq:p_tjm}. 
So, $p_{a,z;0:k:j}^{(s)}$ is nondecreasing
for each $s\in\intr{k-1}j$. 
Also, by \eqref{eq:=1}, $p_{a,z;0:k:j}^{(s)}=0$ 
for each $s\in\intr{j+1}n$. 
We conclude that $p_{a,z;0:k:j}^{(s)}$ is nondecreasing
for each $s\in\intr{k-1}n$. 
Also, by \eqref{eq:p_{a,z;i:k:j} in}, $p_{a,z;0:k:j}\in\PP^j\subseteq\PP^{\le n}$. 
%
%
So, 
\begin{equation}\label{eq:p_a in}
	p_{a,z;0:k:j}\in\PP_+^{k:n}:=\PP^{\le n}\,\cap\,\F_+^{k:n}
	\quad\text{for }j\in F_{k,n}.  
\end{equation}
Hence, by the condition $q_{k;z,y}\in\PP^{\le k-1}$, 
\begin{equation}\label{eq:P in}
	P_{z,y}\in\PP_+^{k:n}.  
\end{equation}
Thus, \eqref{eq:g_y=} may be considered as a Taylor-type expansion of the function $g_y$ \big(which latter is in turn an approximation to $f$, as seen from \eqref{eq:betw} below\big); at that, $R_{z,y}$ may be considered the remainder term, which vanishes when the function $f^{(n)}$ is constant on the interval $I\cap[y,\infty)$. 
In view of \eqref{eq:P=,R=}, \eqref{eq:h in}, and Proposition~\ref{prop:H in F}, 
\begin{equation}\label{eq:R in}
	R_{z,y}\in\H_+^{0:n}
	\subseteq\F_+^{k:n};  
\end{equation}
It follows from \eqref{eq:g_y=}, \eqref{eq:P in},  and \eqref{eq:R in} that 
\begin{equation}\label{eq:g_y in}
	g_y\in\PP_+^{k:n}+\H_+^{0:n}\subseteq\F_+^{k:n}. 
\end{equation}

In view of \eqref{eq:F_kn,unit}, in the unit-gauges case with $a=-\infty$ and $k\le n$, the summands $P_{z,y}$ and $R_{z,y}$ in \eqref{eq:g_y=} 
are as follows: for all $x_0\in I$,  
\begin{align}
&
	P_{z,y}(x_0)=\sum_{i=0}^{k-1}c_{i;z,y}\frac{(x_0-z)^i}{i!}
	+f^{(k)}(-\infty)\,\frac{(x_0-z)^k}{k!},  \quad\text{with } \label{eq:P_zy} \\ 
	&\qquad\qquad\qquad\ \begin{aligned}c_{i;z,y}:=&f^{(i)}(z)-(\tg_y)^{(i)}(z) \\ 
	=&f^{(i)}(z)-h_{i,y}(z)\\ 
	=&f^{(i)}(z)-\int_{I\cap[y,\infty)}\d f^{(n)}(x_n)\,\frac{(z-x_n)_+^{n-i}}{(n-i)!},   
\end{aligned}
 \notag \\
\intertext{and}
	&R_{z,y}(x_0)=\int_{I\cap[y,\infty)}\d f^{(n)}(x_n)\,\frac{(x_0-x_n)_+^n}{n!};  \label{eq:R_zy}
\end{align}
here the second expression for $c_{i;z,y}$ is obtained by \eqref{eq:tg^i=h^i}; if $a>-\infty$ or $k=n+1$, the expression for $P_{z,y}$ is simpler than the one in \eqref{eq:P_zy}. 

By \eqref{eq:P in}, 
Remark~\ref{rem:p}, Lemma~\ref{lem:h_{i,y}}, and the 
continuity of the $w_i$'s, the functions $P_{z,y}$ and $h_{0,y}$ are locally bounded, for each 
$y$. 
Similarly, by \eqref{eq:g^k=tg^k} and \eqref{eq:taylor_y}, $
(g_y)^{(k)}\,w_k$ is locally bounded, for each $y$. In particular, $D_\w^k g_z$ is locally bounded. 
Now by 
Proposition~\ref{prop:compar} and monotone convergence one obtains the following approximative representation of any function $f\in F_+^{k:n}$ by mixtures of $\w$-polynomials and ``positive parts'' thereof. 

\begin{theorem}\label{th:g_y->f} 
For any $f\in F_+^{k:n}$ and $g_y$ as in \eqref{eq:g_y=}--\eqref{eq:P=,R=}, 
\begin{equation}\label{eq:betw}
g_y\near{y\downarrow a}f\ \text{on $I\cap[z,\infty)$\quad and\quad } 	
(-1)^k (f-g_y)\sear{y\downarrow a}0\ \text{on $I\cap(-\infty,z]$}. 
\end{equation}
\end{theorem}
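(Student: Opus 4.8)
The plan is to obtain \eqref{eq:betw} entirely from two facts already recorded for $g_y$: the monotone pointwise convergence $(g_y)^{(k)}\near{y\downarrow a}f^{(k)}$ of \eqref{eq:g down}, and the matching of the Cauchy data at $z$, namely $(g_y)^{(i)}(z)=f^{(i)}(z)$ for $i\in\intr0{k-1}$, from \eqref{eq:g_y^j}. Since $f$ and every $g_y$ belong to $\L^n\subseteq\L^k$ and $\L^k$ is a linear space, the differences $f-g_y$ and $g_{y_2}-g_{y_1}$ again lie in $\L^k$, so Proposition~\ref{prop:compar} applies to them. Throughout, the arrow $\near{y\downarrow a}$ encodes both monotonicity in $y$ and the value of the limit.

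First I would read off the monotonicity. By \eqref{eq:g down}, for $a<y_2<y_1\le z$ one has $(g_{y_2})^{(k)}\ge(g_{y_1})^{(k)}$ on all of $I$, while $(g_{y_2})^{(i)}(z)=f^{(i)}(z)=(g_{y_1})^{(i)}(z)$ for $i\in\intr0{k-1}$. Applying Proposition~\ref{prop:compar} to the pair $(g_{y_2},g_{y_1})$ gives $g_{y_2}\ge g_{y_1}$ on $I\cap[z,\infty)$ and $(-1)^k(g_{y_2}-g_{y_1})\ge0$ on $I\cap(-\infty,z]$; the second inequality rearranges to $(-1)^k(f-g_{y_2})\le(-1)^k(f-g_{y_1})$. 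Thus $g_y$ is nondecreasing on $I\cap[z,\infty)$ and $(-1)^k(f-g_y)$ nonincreasing on $I\cap(-\infty,z]$ as $y\downarrow a$, which is the monotone content of \eqref{eq:betw}. The same comparison applied to the pair $(f,g_y)$, using that $(g_y)^{(k)}\le f^{(k)}$ by \eqref{eq:g down}, yields $g_y\le f$ on $I\cap[z,\infty)$ and $(-1)^k(f-g_y)\ge0$ on $I\cap(-\infty,z]$, so the two monotone families are bounded and their pointwise limits exist.

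It remains to identify these limits with $f$, and this is where monotone convergence enters. Put $\delta_y:=f^{(k)}-(g_y)^{(k)}\ge0$; by \eqref{eq:g down} one has $\delta_y\searrow0$ pointwise and $\delta_y\le\delta_{y_0}$ whenever $y\le y_0$. Subtracting the integral recursions \eqref{eq:f^(j)more} written for $f$ and for $g_y$, and using $(f-g_y)^{(i)}(z)=0$ for $i\in\intr0{k-1}$, I would integrate up from $z$ step by step to express $(f-g_y)^{(i)}$, for each $i$ from $k-1$ down to $0$, as a $(k-i)$-fold iterated integral of $\delta_y$ against the weights $w_{i+1},\dots,w_k$. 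Each integrand is dominated on every compact subinterval by the corresponding expression built from $\delta_{y_0}$, which is finite there since $f^{(k)}w_k$, $(g_{y_0})^{(k)}w_k$ and the $w_j$ are locally bounded; dominated convergence applied from the innermost integral outward then forces every iterated integral to tend to $0$ as $y\downarrow a$. In particular $(f-g_y)^{(0)}\to0$ pointwise, and since $f-g_y=(f-g_y)^{(0)}w_0$ this gives $g_y\to f$ pointwise on $I$. Combined with the monotonicity and bounds of the previous paragraph, this is exactly \eqref{eq:betw}.

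I expect the limit-identification to be the only real obstacle: Proposition~\ref{prop:compar} disposes of the monotonicity and the sign pattern cleanly, but upgrading ``the top derivatives converge monotonically'' to ``the functions converge'' requires passing the limit through $k$ nested integrations, which forces one to check the local finiteness of the dominating iterated integral and to track the sign the integral acquires on $I\cap(-\infty,z]$ -- precisely the factor $(-1)^k$ in the statement, and the reason the two-sided form of Proposition~\ref{prop:compar} is indispensable. The borderline value $k=n+1$ must be handled by the same comparison principle but applied at order $n$ rather than $k$: there $(g_y)^{(n)}(\,\cdot\,)=f^{(n)}(\,\cdot\vee y)$, so $g_y\equiv f$ on $I\cap[z,\infty)$ while $(g_y)^{(n)}\searrow f^{(n)}$ on $I\cap(-\infty,z]$, and Proposition~\ref{prop:compar} at order $n$ delivers the remaining half of \eqref{eq:betw}.
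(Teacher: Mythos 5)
Your proof is correct and follows essentially the same route the paper takes, which is stated only in outline there: Proposition~\ref{prop:compar} applied to the pairs $(g_{y_2},g_{y_1})$ and $(f,g_y)$ for the monotonicity and sign pattern, plus monotone/dominated convergence pushed through the nested integrations (using the local boundedness of $(g_y)^{(k)}w_k$ established just before the theorem) to identify the limit. Your separate treatment of $k=n+1$ via the comparison at order $n$ is a correct and welcome refinement of a point the paper leaves implicit, since $f\in\L^n$ need not lie in $\L^{n+1}$.
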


\begin{remark}\label{rem:left}
In view of \eqref{eq:betw} and \eqref{eq:g_y in}, the cone $\F_+^{k:n}$ of functions $f$ on $I$ can be viewed as the closure, in a certain topology, of the cone $\PP_+^{k:n}+\H_+^{0:n}$. 
One may therefore 
ask whether these two cones are the same, that is, whether $\F_+^{k:n}=\PP_+^{k:n}+\H_+^{0:n}$. 
However, in general this is not the case, for any $n$ and $k$ as in \eqref{eq:k le n+1}. 
For example, in the unit-gauges setting let $I=\R$ 
and 
\begin{equation}\label{eq:ex}
	f(x):=g(x)\ii{x\le0}+p(x)\ii{x>0} 
\end{equation}
for all $x\in\R$, 
where $g(x):=(-1)^k(1-x)^{k-1/2}$ and $p(x):=\sum_{i=0}^{n+1}g^{(i)}(0)x^i/i!$. 
Then $f^{(j)}(x)=g^{(j)}(x)\ii{x\le0}+\sum_{i=j}^{n+1}g^{(i)}(0)x^{i-j}\ii{x>0}/(i-j)!>0$ for all $j\in\intr k{n+1}$ and $x\in\R$, and hence $f\in\F_+^{k:n}$. 
On the other hand, $f\notin\PP_+^{k:n}+\H_+^{0:n}$. 
Indeed, take any $q\in\PP_+^{k:n}$ and $h\in\H_+^{0:n}$. Then, 
by \cite[Lemma~2 on page~619 and formula~(1) on page~606]{asymm}, 
$h(-\infty)=0$; so, for $x\to-\infty$, either $q(x)+h(x)\sim c|x|^k$ for some $c\in(0,\infty)$ or $q(x)+h(x)=O(|x|^{k-1})$ -- depending on whether the degree of the polynomial $q$ is $n$ or $<n$, whereas $|f(x)|\sim|x|^{k-1/2}$.   

Quite similarly, one can show that $\F_+^{k:n}\ne\PP_+^{k:n}+\H_+^{0:n}$ for any interval $I$ with the left endpoint $a=-\infty$, again in the unit-gauges case and again for any $n$ and $k$ as in \eqref{eq:k le n+1}. 
Moreover, in view of Remark~\ref{rem:invar}, it is easy to see that $\F_+^{k:n}\ne\PP_+^{k:n}+\H_+^{0:n}$ for any given nonzero-length interval $I$ and any $n$ and $k$ as in \eqref{eq:k le n+1}, with an appropriate choice of gauge functions $w_0,w_1,\dots$. 

The idea of construction \eqref{eq:ex} comes from \cite{asymm}; cf.\ Propositions~ 1 and 2 therein. As mentioned before, in \cite{asymm} the special case with $n=3$ and $k=1$ was considered. 
\end{remark}

\section{Convex cone dual to \texorpdfstring{$\F_+^{k:n}$}{F} }\label{dual}
Let us recall that condition \eqref{eq:k le n+1} continues to hold in this section.  
Here we shall define and completely characterize the convex cone dual to any set $\G$ such that 
\begin{equation}\label{eq:G}
	\PP_+^{k:n}\cup\PP^+_{0:n}\subseteq\G\subseteq\F_+^{k:n},   
\end{equation}
where $\PP_+^{k:n}$ is as defined in \eqref{eq:p_a in} and 
\begin{equation}\label{eq:PP^+_{0:n}}
	\PP^+_{0:n}:=\{p_{t;0,n}^+\colon t\in I\}, 
\end{equation}
with $p_{t;0,n}^+$ defined according to \eqref{eq:p^+}. 
 
Note that  
\begin{gather}
	\PP^{\le k-1}\subseteq\PP_+^{\le k}, \label{eq:in in,1} \\ 
	\PP^{\le k-1}\subseteq\G, \label{eq:in in,2} \\ 
	k\le n\implies\PP_+^{\le k}\subseteq\G. \label{eq:in in,3} 
\end{gather}
Indeed, by the definition, if $p\in\PP^{\le k-1}$, then $p^{(k)}=0$ and $p^{(k-1)}$ is a constant, whence $p\in\PP_+^{\le k}$, by \eqref{eq:P_+}, and $p\in\PP^{\le n}\,\cap\,\F_+^{k:n}=\PP_+^{k:n}\subseteq\G$, by \eqref{eq:k le n+1}, \eqref{eq:F}, \eqref{eq:p_a in}, and \eqref{eq:G}. This yields \eqref{eq:in in,1} and \eqref{eq:in in,2}. 

If now $k\le n$ and $p\in\PP_+^{\le k}$, then obviously $p\in\PP^{\le n}$, and also $p^{(k)}\ge0$ and $p^{(k+1)}=0$, whence, in view of \eqref{eq:f^j}, $p^{(k-1)}$ is nondecreasing and $p^{(j)}$ is constant for each $j\in\intr kn$, so that, by \eqref{eq:F}, $p\in\F_+^{k:n}$. Thus, recalling again the definition of $\PP_+^{k:n}$ in \eqref{eq:p_a in} and the first set inclusion in \eqref{eq:G}, one obtains \eqref{eq:in in,3}. 


Also, one has 
\begin{proposition}\label{prop:p^+}
$\PP^+_{0:n}\subseteq\F_+^{1:n}\subseteq\F_+^{k:n}$. 
\end{proposition}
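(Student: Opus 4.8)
The plan is to treat the two inclusions separately. The containment $\F_+^{1:n}\subseteq\F_+^{k:n}$ is purely formal, whereas the containment $\PP^+_{0:n}\subseteq\F_+^{1:n}$ I would obtain by recognizing each generator $p^+_{t;0,n}$ as the function $h_{0;\mu}$ attached to a point mass, so that the already-established Proposition~\ref{prop:H in F} applies directly.

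For the inclusion $\F_+^{1:n}\subseteq\F_+^{k:n}$ I would simply unwind the definition \eqref{eq:F}. If $f\in\F_+^{1:n}$, then $f\in\L^n$ and $f^{(j)}$ is nondecreasing for every $j\in\intr0n$. Since $k\ge1$ by \eqref{eq:k le n+1}, we have $\intr{k-1}n\subseteq\intr0n$, so $f^{(j)}$ is nondecreasing for every $j\in\intr{k-1}n$ as well; this is precisely the defining requirement for membership in $\F_+^{k:n}$, and the inclusion follows.

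For the inclusion $\PP^+_{0:n}\subseteq\F_+^{1:n}$, fix $t\in I$. Because $p^+_{t;0,n}$ is defined through \eqref{eq:p^+}, whose admissible values of $t$ form the set $[a,b)$ by \eqref{eq:t in}, we may take $t\in I\setminus\{b\}$; then the point mass $\delta_t$ satisfies $\delta_t(I\cap\{b\})=0$ and hence $\delta_t\in\M_+$. Substituting $\delta_t$ for $\mu$ in the defining integral \eqref{eq:h_{j;mu}:=} gives $h_{j;\delta_t}=p^+_{t;j,n}$ for every $j\in\intr0n$, and by the local boundedness of the gauge functions these are all real-valued for $t\in I\setminus\{b\}$. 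Thus the finiteness condition in the definition of $\H_+^{0:n}$ holds, so $p^+_{t;0,n}=h_{0;\delta_t}\in\H_+^{0:n}$. Applying Proposition~\ref{prop:H in F} with $k=1$ (legitimate since $1\le1\le n+1$) then gives $p^+_{t;0,n}\in\H_+^{0:n}\subseteq\F_+^{1:n}$, as required.

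The one point requiring separate care is the endpoint $t=b$ when $b\in I$, where $\delta_t\notin\M_+$ and the argument above breaks down; I expect this to be the only genuine obstacle. I would dispose of it by direct inspection: since every $x\in I$ satisfies $x\le b$, the indicator in \eqref{eq:p^+} reduces $\ii{x\ge b}$ to $\ii{x=b}$, so for $n\ge1$ the base-point vanishing $p_{b;0,n}(b)=0$ from \eqref{eq:Dp} forces $p^+_{b;0,n}\equiv0\in\F_+^{1:n}$, while for $n=0$ one has $p^+_{b;0,0}=w_0(b)\,\ii{\cdot=b}$, which vanishes on $I\setminus\{b\}$ and is therefore nondecreasing, hence in $\F_+^{1:0}$. (If one reads the index set in \eqref{eq:PP^+_{0:n}} as $t\in I\cap[a,b)$, in accordance with \eqref{eq:t in}, this case does not arise at all.) Once the identification $p^+_{t;0,n}=h_{0;\delta_t}$ is in hand, the remainder of the proof is immediate from Proposition~\ref{prop:H in F}.
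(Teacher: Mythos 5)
Your argument is correct, but it takes a genuinely different route from the paper's. The paper proves $\PP^+_{0:n}\subseteq\F_+^{1:n}$ by direct computation: iterating Lemma~\ref{lem:Dp^+} gives $(p^+_{t;0,n})^{(j)}=p^+_{t;j,n}/w_j\ge0$ for all $j\in\intr0n$, so each $(p^+_{t;0,n})^{(j)}$ with $j\le n-1$ is nondecreasing because $(p^+_{t;0,n})^{(j+1)}w_{j+1}=p^+_{t;j+1,n}\ge0$, while $(p^+_{t;0,n})^{(n)}=\ii{\cdot\ge t}$ is nondecreasing by inspection; the inclusion $\F_+^{1:n}\subseteq\F_+^{k:n}$ is then read off from \eqref{eq:F} exactly as you do. You instead recognize $p^+_{t;0,n}$ as $h_{0;\delta_t}$ for the point mass $\delta_t$ and delegate to Proposition~\ref{prop:H in F}; this is sound — the proof of that proposition in fact shows that all of $h_{0;\mu}^{(0)},\dots,h_{0;\mu}^{(n)}$ are nondecreasing, so membership in $\F_+^{1:n}$ (the ``$k=1$'' reading) is indeed what it delivers — and it is conceptually appealing in that it exhibits the generators of $\PP^+_{0:n}$ as precisely the point-mass elements of $\H_+^{0:n}$. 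The price of your route is the corner case $t=b\in I$, where $\delta_b\notin\M_+$; you handle it correctly ($p^+_{b;0,n}\equiv0$ for $n\ge1$ by the vanishing in \eqref{eq:Dp}, and $\ii{\cdot\ge b}$ is nondecreasing on $I$ when $n=0$), and you rightly observe that under the reading $t\in[a,b)$ imposed by \eqref{eq:t in} this case does not arise at all. The paper's direct verification avoids that case split entirely, but both arguments ultimately rest on the same identity $D_j\,p^+_{t;j,n}=p^+_{t;j+1,n}$ of Lemma~\ref{lem:Dp^+}, which also underlies \eqref{eq:Dh_{i;mu}} and hence Proposition~\ref{prop:H in F}.
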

\begin{proof}[Proof of Proposition~\ref{prop:p^+}]
Take any $t\in I$. In view of \eqref{eq:p^+_{t;j+1,m}=}, \eqref{eq:f^j}, and \eqref{eq:p^+_{t;j,m}>0}, $\big(p_{t;0,n}^+\big)^{(j)}=p_{t;j,n}^+/w_j\ge0$ for all $j\in\intr0n$. 
In particular, $\big(p_{t;0,n}^+\big)^{(n)}(x_0)=\ii{x_0\ge t}$ is obviously nondecreasing in $x_0\in I$. 
It also follows that for each $j\in\intr0{n-1}$ one has $(p_{t;0,n}^+)^{(j+1)}w_{j+1}=p_{t;j+1,n}^+\ge0$ and hence, by \eqref{eq:f^j},  $(p_{t;0,n}^+)^{(j)}$ is nondecreasing. 
So, $p_{t;0,n}^+\in\F_+^{1:n}\subseteq\F_+^{k:n}$. 
Now Proposition~\ref{prop:p^+} follows by \eqref{eq:PP^+_{0:n}}. 
\end{proof}

By \eqref
{eq:p_a in} and Proposition~\ref{prop:p^+}, there always is a set $\G$ satisfying conditions~\eqref{eq:G},  
which will be the only conditions generally imposed on $\G$ in this paper. 
In particular, the set $\G$ will not have to be convex or a cone. However, the cone dual to $\G$, to be denoted by $\hG$ and defined later in this section, will be a convex cone indeed. Moreover, it will turn out that in most cases the dual cone $\hG$ will not depend on the choice of $\G$ as long as conditions \eqref{eq:G} are satisfied -- the only exception in this regard being the case when all of the following conditions hold: 
\begin{equation}\label{eq:except}
\text{$k=n+1$, $k$ is odd, and $a\notin I$.}	
\end{equation}
So, unless this exceptional case takes place, the dual cone $\hG$ will coincide with $\hF_+^{k:n}$.

\subsection{Admissible set of measures}\label{admissible}
In accordance with the general definition of the dual cone (see e.g.\ \cite[Ch.\ III, Section~5]{ekeland-temam} or \cite[page~7]{zalinescu}), it appears natural to define the cone $\hG$ dual to the set $\G$ of functions on $I$ as consisting of signed measures on the Borel $\si$-algebra -- say $\B$ --  
over $I$. 
However, we shall take a more general approach by letting $\hG$ be a set of ordered pairs $(\nu_1,\nu_2)$ of nonnegative (not necessarily finite) measures on $\B$ such that $\nu_1(f)\ge\nu_2(f)$ for all $f\in\G$. Here and subsequently, we use the common definition $\nu(f):=\int_I\,f\d\nu$ for a Borel-measurable function $f\colon I\to\R$ and a nonnegative measure $\nu$ on $\B$, if the integral exists in the extended sense, that is, if at least one of the values $\nu(f_+)$ or $\nu(f_-)$ is finite, and in such a case we let $\nu(f):=\nu(f_+)-\nu(f_-)$; as usual, $f_+:=f\vee0$ and $f_-:=(-f)_+$. 
For brevity (unless otherwise indicated), when we say that $\nu(f)$ satisfies a certain condition, it will actually mean that $\nu(f)$ \emph{exists and} satisfies that condition. E.g., if we say $\nu(f)>-\infty$, it actually means that $\nu(f)$ exists and does not equal $-\infty$ \big(which is equivalent to the statement that $\nu(f_-)<\infty$\big). 

Of course, if at least one of the nonnegative measures $\nu_1,\nu_2$ is finite, then one can introduce the signed measure $\nu:=\nu_1-\nu_2$; if, moreover, at least one of the integrals $\nu_1(f),\nu_2(f)$ is finite, then one can also let $\nu(f):=\nu_1(f)-\nu_2(f)$ and write the usual duality condition $\nu(f)\ge0$ instead of $\nu_1(f)\ge\nu_2(f)$.  
However, such additional restrictions on the finiteness of one of the measures $\nu_1,\nu_2$ or one of the integrals $\nu_1(f),\nu_2(f)$ are unnecessary for our results on the dual cone or in the relevant applications. 

Yet, in order to ensure that the dual cone $\hG$ be convex, one cannot allow two pairs $(\nu_1,\nu_2)$ and $(\rho_1,\rho_2)$ of measures to both belong to $\hG$ if $\{\nu_j(f),\rho_j(f)\}=\{\infty,-\infty\}$ for some $f\in\G$ and some $j\in\{1,2\}$ -- because in that case the integral $(\nu_j+\rho_j)(f)$ would not exist and thus the pair $(\nu_1,\nu_2)+(\rho_1,\rho_2)=(\nu_1+\rho_1,\nu_2+\rho_2)$ could not possibly belong to $\hG$. 
For this reason, only pairs $(\nu_1,\nu_2)$ of nonnegative measures such that $\nu_1(f)\wedge\nu_2(f)>-\infty$ for all $f\in\G$ will be allowed to belong to the dual cone $\hG$; such pairs of measures may be referred to as admissible.  

To formalize this approach to admissibility (which works well in the applications), let us first introduce the notation $\NN_+$ for the set of all nonnegative (not necessarily finite) measures on $\B$.   
Introduce next the set
\begin{align}\label{eq:N_+^k}  
		\NN_+(\G)
		&:=\big\{\nu\in\NN_+\colon\nu(f)>-\infty\ \text{for all }f\in\G\big\},    
\end{align}
which may be referred to as the admissible set (of nonnegative measures corresponding to the set $\G$ of functions). 
One has the following characterization of this admissible set. 

\begin{proposition}\label{prop:N_+^k}
Take any $\nu\in\NN_+$.  
\begin{enumerate}[label=\emph{(\roman*)}]
	\item If $k\le n$, then 
\begin{equation}\label{eq:iff1}
\begin{aligned}
\nu\in\NN_+(\G)&\iff\nu(p)>-\infty\text{ for all }p\in\PP_+^{\le k}.   
\end{aligned}
\end{equation}	
	\item If $k$ is even or $a\in I$, then 
\begin{equation}\label{eq:iff2}
\begin{aligned}
\nu\in\NN_+(\G)&\iff\nu(p)>-\infty\text{ for all }p\in\PP^{\le k-1} \\
  &\iff\nu(p)\in\R\text{ for all }p\in\PP^{\le k-1}
\end{aligned}
\end{equation}
\item If the exceptional case \eqref{eq:except} takes place
, then  
\begin{equation}\label{eq:iff3}
\begin{aligned}
\nu\in\NN_+\big(\F_+^{k:n}\big)\iff &
\nu(p)\in\R\text{ for all }p\in\PP^{\le k-1} \\
&\quad\&\ \nu\big((a,\ta)\big)=0\text{ for some }\ta\in I \\
\implies & \nu\in\NN_+(\G).   
\end{aligned}
\end{equation}	
\end{enumerate}
\end{proposition}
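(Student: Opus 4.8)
The unifying observation behind all three parts is the elementary monotonicity of integration: for a nonnegative measure $\nu$ and functions with $f\ge p$ pointwise on $I$ one has $f_-\le p_-$, hence $\nu(f_-)\le\nu(p_-)$, so that $\nu(p)>-\infty$ forces $\nu(f)>-\infty$. Thus every ``$\Leftarrow$'' direction reduces to producing, for an arbitrary $f\in\G\subseteq\F_+^{k:n}$, a \emph{global} pointwise lower bound $f\ge p$ by a $\w$-polynomial $p$ lying in the relevant test class, which is exactly what Proposition~\ref{prop:bounds} supplies. Each ``$\Rightarrow$'' direction is immediate from the inclusions \eqref{eq:in in,2} and \eqref{eq:in in,3}, which place the pertinent polynomials inside $\G$; and since $\PP^{\le k-1}$ is a linear space, applying the hypothesis to both $p$ and $-p$ upgrades ``$\nu(p)>-\infty$ for all $p\in\PP^{\le k-1}$'' to ``$\nu(p)\in\R$ for all $p\in\PP^{\le k-1}$'', which gives the second equivalence in part~(ii) for free.

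For part~(i) the ``$\Rightarrow$'' direction is $\PP_+^{\le k}\subseteq\G$ from \eqref{eq:in in,3}. For ``$\Leftarrow$'' I would split on parity: when $k$ is even, Proposition~\ref{prop:bounds}(I)(i) gives $f\ge p$ on all of $I$ with $p\in\PP^{\le k-1}\subseteq\PP_+^{\le k}$ (using \eqref{eq:in in,1}); when $k$ is odd, Proposition~\ref{prop:bounds}(II)(i) gives $f\ge q$ on all of $I$ with $q\in\PP_+^{\le k}$. Either way the global bound lies in $\PP_+^{\le k}$, so the hypothesis yields $\nu(f)>-\infty$. For part~(ii) the ``$\Rightarrow$'' direction is $\PP^{\le k-1}\subseteq\G$, and for ``$\Leftarrow$'' the goal is a global bound $f\ge p$ with $p\in\PP^{\le k-1}$. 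If $k$ is even this is Proposition~\ref{prop:bounds}(I)(i) directly; if $a\in I$ (with $k$ odd) the key trick is to apply Proposition~\ref{prop:bounds}(I)(ii) \emph{at the left endpoint} $z=a$, so that $I\cap[a,\infty)=I$ turns ``$f\ge p$ on $I\cap[z,\infty)$'' into a genuine global bound while the companion inequality on $I\cap(-\infty,a]=\{a\}$ is vacuous.

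Part~(iii) is where the real work lies. Writing $A$ for $\nu\in\NN_+(\F_+^{k:n})$, $B$ for the conjunction of the polynomial and the mass condition, and $C$ for $\nu\in\NN_+(\G)$, the implication $B\Rightarrow C$ is free because $\G\subseteq\F_+^{k:n}$ gives $\NN_+(\F_+^{k:n})\subseteq\NN_+(\G)$, so it suffices to prove $A\iff B$. For $B\Rightarrow A$: here $\PP^{\le k-1}=\PP^{\le n}$, and $\nu\big((a,\ta_0)\big)=0$ means $\nu$ is carried by $I\cap[\ta_0,\infty)$; on that set Proposition~\ref{prop:bounds}(I)(ii) with $z=\ta_0$ gives $f\ge p$ for some $p\in\PP^{\le n}$, and integrating $f_-\le p_-$ over the support of $\nu$ yields $\nu(f)>-\infty$. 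For $A\Rightarrow B$, the polynomial half follows from $\PP^{\le n}\subseteq\F_+^{n+1:n}$ (a $\w$-polynomial of degree $\le n$ has constant, hence nondecreasing, $n$th generalized derivative) together with linearity; the mass condition I would prove by contraposition.

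The main obstacle is precisely this contrapositive: assuming $\nu\big((a,\ta)\big)>0$ for \emph{every} $\ta\in I$, I must exhibit a single $f\in\F_+^{n+1:n}$ with $\nu(f)=-\infty$. The plan is to build $f$ from the top down by prescribing its highest generalized derivative: take a nondecreasing $\phi\in\LD$ with $\phi(a+)=-\infty$, set $f^{(n)}:=\phi$, and reconstruct $f^{(n-1)},\dots,f^{(0)}$, $f=f^{(0)}w_0$ from the iterated integration in \eqref{eq:f^(j)more} with arbitrary finite data at $z$; since $\phi$ is nondecreasing, $f\in\F_+^{n+1:n}$ automatically. A sign analysis of \eqref{eq:f^(j)more} shows that, once $\phi$ diverges fast enough that each successive integral diverges, $f^{(j)}(x)\to(-1)^{n-j}\infty$ as $x\downarrow a$; in particular $f^{(1)}\to+\infty$, so $f^{(0)}$ is eventually nondecreasing near $a$, while $f^{(0)}\to-\infty$ because the exponent $(-1)^k=-1$ (as $k=n+1$ is odd). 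Passing to $\nu':=w_0\,\nu$, which still charges every $(a,\ta)$ since $w_0>0$, the task becomes $\int(-f^{(0)})\,\d\nu'=\infty$; since the blow-up rate of $f^{(0)}$ at $a$ can be made arbitrarily fast by accelerating $\phi$, one tailors $\phi$ to the $\nu'$-mass on a sequence of shells accumulating at $a$ (the general-gauge analogue of the super-polynomial example of Remark~\ref{rem:left}). The delicate points are the bootstrapping needed to keep every intermediate integral divergent for all $j$ at once, and the verification that the accelerated $\phi$ leaves $f$ real-valued and in $\L^n$ throughout $I$; both are routine once $\phi$ is chosen to dominate near $a$ every power of a fixed positive $\w$-polynomial.
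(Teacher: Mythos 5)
Your treatment of parts (i) and (ii) coincides with the paper's: the forward implications come from the inclusions \eqref{eq:in in,2}--\eqref{eq:in in,3}, the reverse ones from the global or one-sided lower bounds of Proposition~\ref{prop:bounds} (with the same parity split, and the same trick of taking $z=a$ in part (I)(ii)(*) when $a\in I$), and the upgrade to $\nu(p)\in\R$ via $p\mapsto-p$. The easy directions of part (iii) (reduction to the subinterval $I\cap[\ta,\infty)$ when $\nu$ puts no mass on $(a,\ta)$, and the polynomial half of $A\Rightarrow B$ via $\PP^{\le k-1}=\PP^{\le n}\subseteq\F_+^{n+1:n}$) also match.

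The one place you diverge is the crux: showing that $\nu\big((a,\ta)\big)>0$ for all $\ta\in I$ forces $\nu(f_-)=\infty$ for some $f\in\F_+^{n+1:n}$. Your top-down scheme (prescribe $f^{(n)}=\phi$ with $\phi(a+)=-\infty$ and tailor the blow-up of $f^{(0)}$ to the $\nu$-mass of shells) can in principle be completed, but the step you call routine is in fact the entire content of the argument: to guarantee $f^{(0)}\le-N_i$ on a prescribed shell you need a \emph{lower} bound on an $n$-fold iterated integral of $-\phi$ against the general gauges $w_n,\dots,w_1$, whose tails $\int_a w_j$ may converge or diverge, and this bootstrapping through all $n$ levels is not written down. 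The paper avoids pointwise rate control entirely. Since $k=n+1$ is odd, $n$ is even, so the ``negative parts'' $p_{t;0,n}^-(x):=p_{t;0,n}(x)\,\ii{x<t}$ are nonnegative and strictly positive on $(a,t)$; picking $t_i\downarrow a$ and $\ga_i$ with $0<\ga_i\le\nu(p_{t_i;0,n}^-)$ (possible because $\nu\big((a,t_i)\big)>0$), one sets $f:=-\sum_i\ga_i^{-1}p_{t_i;0,n}^-$. Each $x\in I$ meets only finitely many summands (as $a\notin I$), $f^{(n)}(x)=-\mu\big(I\cap(x,\infty)\big)$ is nondecreasing so $f\in\F_+^{n+1:n}$, and monotone convergence gives $\nu(f_-)\ge\sum_i 1=\infty$ term by term --- no asymptotics of $f^{(0)}$ near $a$ are ever needed. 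I would either adopt this mixture construction or actually carry out the iterated-integral lower bounds; as it stands, your proof of part (iii) has a gap precisely at its hardest point.
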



Thus \big(given the condition \eqref{eq:G}\big), the admissible set $\NN_+(\G)$ does not actually depend on the choice of  $\G$ -- except for the case \eqref{eq:except}. 
In that exceptional case, \eqref{eq:iff3} shows that the admissible set $\NN_+\big(\F_+^{k:n}\big)$ is inconveniently too small, consisting only of measures $\nu$ with support $\supp\nu$ bounded away from the left endpoint $a$ of the interval $I$; in particular, in the important case when $a=-\infty$, the set $\supp\nu$ will have to be bounded from below, which would rule out applications without such a restriction. 
Allowing $\G$ to differ from $\F_+^{k:n}$ was motivated by this inconvenience. 
Indeed, if the class $\G$ is smaller $\F_+^{k:n}$ then, in view of \eqref{eq:N_+^k}, the admissible set $\NN_+(\G)$ may turn out to be a large enough extension of the too small class $\NN_+\big(\F_+^{k:n}\big)$ of measures.  
In particular, a 
sensible choice of $\G$ in the exceptional case \eqref{eq:except} appears to be given by the formula 
\begin{equation}\label{eq:G,except}
	\G=\{f\in\F_+^{n+1:n}\colon f\ge p\text{ for some }p\in\PP^{\le n}\}, 
\end{equation}
so that the equivalences in \eqref{eq:iff2} obviously continue to hold even in the exceptional case \eqref{eq:except}. 

\begin{proof}[Proof of Proposition~\ref{prop:N_+^k}]\ \\ 
The implication $\implies$ in 
part~(i) of this proposition follows immediately by \eqref{eq:N_+^k} 
and 
\eqref{eq:in in,3}, whereas the reverse implication $\Longleftarrow$ there 
follows by 
parts~(II)(i) and (I)(i) of Proposition~\ref{prop:bounds} and 
\eqref{eq:in in,1}. 

If $k$ is even, then the first equivalence in 
part~(ii) of Proposition~\ref{prop:N_+^k} follows by \eqref{eq:N_+^k}, 
part~(I)(i) of Proposition~\ref{prop:bounds}, and \eqref
{eq:in in,2}. 
If $k$ is odd and $a\in I$, then the just mentioned equivalence 
follows by \eqref{eq:N_+^k}, 
part~(I)(ii)(*) of Proposition~\ref{prop:bounds} (with $z=a$), and \eqref
{eq:in in,2}. 
As for the second equivalence in part~(ii) of Proposition~\ref{prop:N_+^k}, it follows because $-p\in\PP^{\le k-1}$ for any $p\in\PP^{\le k-1}$. 

To complete the proof of Proposition~\ref{prop:N_+^k}, it remains to prove its part~(iii). 
To do this, suppose first $\nu\big((a,\ta)\big)=0$ for some $\ta\in I$. 
Then one can replace $\nu$ by its restriction to the Borel $\si$-algebra over the reduced interval $I\cap[\ta,\infty)$ in place of $I$ and, accordingly, replace the functions in $\G$ and the functions $w_0,w_1,\dots$ by their respective restrictions to the interval $I\cap[\ta,\infty)$, which obviously contains its left endpoint $\ta$. Now the implication $\implies$ in the last line in \eqref{eq:iff3} and, in particular, the implication $\Longleftarrow$ in the first line there follow immediately by already verified part~(ii) of Proposition~\ref{prop:N_+^k}. 

The implication $\nu\in\NN_+\big(\F_+^{k:n}\big)\implies
\nu(p)\in\R\text{ for all }p\in\PP^{\le k-1}$ in \eqref{eq:iff3} follows by 
\eqref{eq:in in,2}, \eqref{eq:G}, and the second equivalence in \eqref{eq:iff2} (which latter holds whether or not the exceptional case \eqref{eq:except} takes place). 

Thus, it remains to verify the implication $\nu\in\NN_+\big(\F_+^{k:n}\big)\implies
\nu\big((a,\ta)\big)=0\text{ for some }\ta\in I$ in \eqref{eq:iff3}. Toward this end, assume, on the contrary, that $\nu\big((a,\ta)\big)>0$ for all $\ta\in I$. 
Take any sequence $(t_i)_{i\in\N}$ in $I$ such that $t_i\to
a$ (as $i\to
\infty$). 
Then, by the assumption, $\nu\big((a,t_i)\big)>0$ for all $i\in\N$. 
Introduce now the functions $p_{t;j,m}^-\colon I\to\R$ by the formula 
\begin{align}\label{eq:p^-}
	p_{t;j,m}^-(x_j):=p_{t;j,m}(x_j)\,\ii{x_j<t}
\end{align}
for all $t$ and $x_j\in I$ (cf.\ \eqref{eq:p^+}). 
The conditions in \eqref{eq:except} that $k = n + 1$ and $k$ is odd imply that $n$ is even. 
So,    
by \eqref{eq:p^-} and \eqref{eq:p_tjm}, $p_{t;0,n}^-\ge0$ (on $I$) and $p_{t;0,n}^->0$ on $I\cap(-\infty,t)=(a,t)$, for any $t\in I$. 
Take now any $i\in\N$. Then, recalling that $\nu\big((a,t_i)\big)>0$, take any $\ga_i\in\R$ such that 
\begin{equation}\label{eq:ga_i}
	0<\ga_i\le\nu(p_{t_i;0,n}^-);  
\end{equation} 
in particular, if $\nu(p_{t_i;0,n}^-)<\infty$ one may take $\ga_i=\nu(p_{t_i;0,n}^-)$. 
Now introduce the function 
\begin{equation}\label{eq:f=sum}
	f:=-\sum_{i=1}^\infty\frac1{\ga_i}\,p_{t_i;0,n}^-
	=-\int_I\mu(\d t)\,p_{t;0,n}^-,  
\end{equation}
where 
$\mu$ is the nonnegative measure defined by the formula  
\begin{equation*}
	\mu(g):=\sum_{i=1}^\infty\frac1{\ga_i}\,g(t_i)
\end{equation*}
for all nonnegative functions $g$ on $I$. 
Since $p_{t;0,n}^-\ge0$ for all $t\in I$, it follows that $f\le0$. Moreover, in view of the condition $a\notin I$ in \eqref{eq:except}, for any $x\in I$ there is some $i_x\in\N$ such that for all $i\in\intr{i_x}\infty$ one has $t_i<x$ and hence $p_{t_i;0,n}^-(x)=0$, so that $f(x)=-\sum_{i=1}^{i_x-1}\frac1{\ga_i}\,p_{t_i;0,n}^-(x)>-\infty$. 
Therefore, $f(x)\in(-\infty,0]$ for all $x\in I$. 
Next \big(cf.\ \eqref{eq:f=sum}, 
Lemma~\ref{lem:h_{i;mu}}, and \eqref{eq:p_tjm}\big), 
\begin{equation*}
\begin{aligned}
f^{(n)}(x)&=-\frac1{w_n(x)}\,\int_I\mu(\d t)\,p_{t;n,n}^-(x) \\ 
&=-\frac1{w_n(x)}\,\int_I\mu(\d t)\,p_{t;n,n}(x)\ii{x<t} \\ 
&=-\int_I\mu(\d t)\ii{x<t}=-\mu\big(I\cap(x,\infty)\big)	
\end{aligned}
\end{equation*}
is nondecreasing in $x\in I$, so that $f\in\F_+^{n+1:n}=\F_+^{k:n}$. 
On the other hand, by \eqref{eq:f=sum} and \eqref{eq:ga_i}, 
\begin{equation*}
	\nu(f)=-\sum_{i=1}^\infty\frac1{\ga_i}\,\nu(p_{t_i;0,n}^-)\le-\sum_{i=1}^\infty1=-\infty. 
\end{equation*}
So, the 
assumption that the condition ``$\nu\big((a,\ta)\big)=0$ for some $\ta\in I$\kern1pt'' is violated has led to the conclusion that $\nu\notin\NN_+\big(\F_+^{k:n}\big)$. 
This completes the proof of part~(iii) of Proposition~\ref{prop:N_+^k} as well. 
\end{proof}

Suppose, for example, that the exceptional case \eqref{eq:except} with $n=0$ takes place in the unit-gauges setting. Then $\F_+^{k:n}=\F_+^{n+1:n}=\F_+^{1:0}$ is the set of all nondecreasing 
functions $f\colon I\to\R$. 
In this situation, with $a\notin I$, it is rather clear that, for any nonnegative measure $\nu$ on $\B$ with $\inf\supp\nu=a$, one can choose a function $f\in\F_+^{1:0}=\F_+^{k:n}$ growing so fast (from $-\infty$ up) on $I$ that $\nu(f)=-\infty$. This simple observation was the main idea behind the above proof of part~(iii) of Proposition~\ref{prop:N_+^k}. 
Again for the exceptional case \eqref{eq:except} with $n=0$ in the unit-gauges setting, the set $\G$ as in \eqref{eq:G,except} is the set of all nondecreasing 
functions in $\LD$ that are \emph{also bounded from below}, which latter appears to be a rather natural additional condition to impose on the functions in $\F_+^{1:0}$.  

The function $f$ defined by formula \eqref{eq:f=sum} in the proof of part~(iii) of Proposition~\ref{prop:N_+^k} may be considered a generalized spline of order $n$. For instance, in the unit-gauges setting with $n=2$ and $k=n+1=3$, that function $f$ will be continuously differentiable, with the graph consisting of countably many parabolic arcs.


\subsection{Dual cone}\label{dual G}
Define the dual cone $\hG$ by the formula 
\begin{equation}\label{eq:hG}
\begin{aligned}
	\hG:=&\big\{(\nu_1,\nu_2)\in\NN_+(\G)\times\NN_+(\G)\colon\nu_1(f)\ge\nu_2(f)\ \text{for all }f\in\G\big\} \\
	=&\big\{(\nu_1,\nu_2)\in\NN_+\times\NN_+\colon\nu_1(f)\ge\nu_2(f)>-\infty\ \text{for all }f\in\G\big\}. 
\end{aligned}	
\end{equation}

\begin{theorem}\label{th:dual cone} 
Take any $(\nu_1,\nu_2)\in\NN_+(\G)\times\NN_+(\G)$. Then $(\nu_1,\nu_2)\in\hG$ if and only if all of the following conditions hold: 
	\begin{enumerate}[label=\emph{(\roman*)}]
	\item $\nu_1(p)=\nu_2(p)\in\R
	$ for all $p\in\PP^{\le k-1}$;   
	\item 
	$\nu_1(p)\ge\nu_2(p)$ for all $p\in\PP_+^{k:n}\big[=\PP^{\le n}\,\cap\,\F_+^{k:n}$, by \eqref{eq:p_a in} 
	\big];  
	\item $\nu_1(p_{t;0,n}^+)\ge\nu_2(p_{t;0,n}^+)$ 
	for all $t\in I$. 
\end{enumerate}
\end{theorem}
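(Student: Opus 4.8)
For necessity, suppose $(\nu_1,\nu_2)\in\hG$, so that $\nu_1(f)\ge\nu_2(f)>-\infty$ for all $f\in\G$ by \eqref{eq:hG}. Conditions~(ii) and~(iii) are then immediate from the inclusions $\PP_+^{k:n}\subseteq\G$ and $\{p_{t;0,n}^+\colon t\in I\}\subseteq\G$ recorded in \eqref{eq:G} and \eqref{eq:PP^+_{0:n}}. For~(i) I would use that $\PP^{\le k-1}\subseteq\G$ by \eqref{eq:in in,2} and that this set is a \emph{linear} space: applying the duality inequality to both $p$ and $-p$ gives $\nu_1(p)\ge\nu_2(p)$ and $\nu_1(p)\le\nu_2(p)$, hence equality; and admissibility applied to both $p$ and $-p$ forces both the positive and negative parts of $p$ to be $\nu_i$-integrable, so the common value is finite.

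For sufficiency, assume~(i)--(iii), fix $f\in\G\subseteq\F_+^{k:n}$, and fix $z\in(a,b)$. For each $y\in(a,z]$ I would use the representation $g_{z,y}=P_{z,y}+R_{z,y}$ from \eqref{eq:g_y=}--\eqref{eq:P=,R=}, in which $P_{z,y}\in\PP_+^{k:n}$ by \eqref{eq:P in} and $R_{z,y}=h_{0,y}\ge0$ is the nonnegative mixture $\int_{I\cap[y,\infty)}\d f^{(n)}(t)\,p_{t;0,n}^+$. The first step is the ``discrete'' inequality $\nu_1(g_{z,y})\ge\nu_2(g_{z,y})$: condition~(ii) handles $P_{z,y}$, while for $R_{z,y}$ I would invoke Tonelli's theorem (all integrands nonnegative) to write $\nu_i(R_{z,y})=\int_{I\cap[y,\infty)}\d f^{(n)}(t)\,\nu_i(p_{t;0,n}^+)$ and then integrate condition~(iii) against the nonnegative measure $\d f^{(n)}$. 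The splitting $\nu_i(g_{z,y})=\nu_i(P_{z,y})+\nu_i(R_{z,y})$ is legitimate since $\nu_i(P_{z,y})>-\infty$ (as $P_{z,y}\in\G$) and $\nu_i(R_{z,y})\ge0$, ruling out any $\infty-\infty$.

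The heart of the proof is to pass to the limit along a sequence $y_m\downarrow a$ and recover $\nu_1(f)\ge\nu_2(f)$. By Theorem~\ref{th:g_y->f} the functions $g_{z,y_m}$ converge to $f$ pointwise -- increasingly on $I\cap[z,\infty)$ and, when $k$ is odd, decreasingly from above on $I\cap(-\infty,z)$ -- and in either parity $g_{z,y_m}$ stays between $f$ and the extreme approximant $g_{z,z}$. I would then target the chain $\nu_2(f)\le\liminf_m\nu_2(g_{z,y_m})\le\liminf_m\nu_1(g_{z,y_m})\le\limsup_m\nu_1(g_{z,y_m})\le\nu_1(f)$, whose central inequality is exactly the discrete step above. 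The left estimate would come from Fatou's lemma applied to $\nu_2$ once a $\nu_2$-integrable lower envelope of the $g_{z,y_m}$ is exhibited, and the right estimate from the reverse Fatou lemma applied to $\nu_1$ once an upper envelope whose positive part is $\nu_1$-integrable is exhibited (the right estimate being trivial when $\nu_1(f)=+\infty$).

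Exhibiting these envelopes is the main obstacle, and it is where the hypotheses are genuinely used. A lower envelope equal to $g_{z,z}$ on $I\cap[z,\infty)$ and to $f$ on $I\cap(-\infty,z)$ is integrable below against $\nu_2$ because $\nu_2(f)>-\infty$ and, since $R_{z,z}\ge0$ gives $(g_{z,z})_-\le(P_{z,z})_-$, one has $\nu_2((g_{z,z})_-)\le\nu_2((P_{z,z})_-)<\infty$. For the upper envelope one takes $f$ on $I\cap[z,\infty)$ (whose positive part is $\nu_1$-integrable precisely when $\nu_1(f)<\infty$) and, on $I\cap(-\infty,z)$ where $R_{z,z}$ vanishes and $g_{z,z}=P_{z,z}$, one bounds $P_{z,z}\in\F_+^{k:n}$ above by a $\w$-polynomial $p\in\PP^{\le k-1}$ via Proposition~\ref{prop:bounds}(I); condition~(i) then yields $\nu_1(p)\in\R$, hence $\nu_1(p_+)<\infty$, taming the polynomial growth near the left endpoint $a$. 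This control of integrability against $\nu_1$ near $a$ is precisely the delicate point that the admissible set $\NN_+(\G)$ (Proposition~\ref{prop:N_+^k}) and the exceptional case \eqref{eq:except} are designed to manage; when $k$ is even the upper estimate requires no such work, since then $g_{z,y_m}\le f$ everywhere and the right-hand inequality of the chain follows from monotonicity alone.
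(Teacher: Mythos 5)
Your proposal is correct and follows essentially the same route as the paper's proof: the same necessity argument via \eqref{eq:G}, \eqref{eq:PP^+_{0:n}} and the linearity of $\PP^{\le k-1}$, the same decomposition $g_{z,y}=P_{z,y}+R_{z,y}$ with conditions (ii) and (iii) yielding $\nu_1(g_{z,y})\ge\nu_2(g_{z,y})$, and the same limit passage via Theorem~\ref{th:g_y->f} and the $\PP^{\le k-1}$-bounds of Proposition~\ref{prop:bounds}(I) controlled by condition (i) --- the paper merely phrases the limit step as monotone convergence relative to the reference polynomial $p_z$ on each of $I\cap[z,\infty)$ and $I\cap(-\infty,z)$ rather than as Fatou and reverse Fatou with explicit envelopes. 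One small bookkeeping point: for $k$ even your stated lower envelope (equal to $f$ on $I\cap(-\infty,z)$) is actually an upper bound of the $g_{z,y}$ there, so the correct lower envelope in that parity is $g_{z,z}$ on all of $I$; your own estimate $\nu_2\big((g_{z,z})_-\big)\le\nu_2\big((P_{z,z})_-\big)<\infty$ already covers this.
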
 
Thus, the verification of the condition on $(\nu_1,\nu_2)\in\NN_+(\G)\times\NN_+(\G)$ that $\nu_1(f)\ge\nu_2(f)$ for all $f\in\F_+^{k:n}$ reduces to the verification of this inequality just for certain $\w$-polynomials and their ``positive parts''. 

\begin{remark}\label{rem:ii->i}
For each $(\nu_1,\nu_2)\in\NN_+(\G)\times\NN_+(\G)$, condition (i) of Theorem~\ref{th:dual cone} follows from condition (ii) there. 
Indeed, suppose that condition~(ii) holds, and then  
take any $(\nu_1,\nu_2)\in\NN_+(\G)\times\NN_+(\G)$ and any $p\in\PP^{\le k-1}$. 
Then $\{p,-p\}\subseteq\PP^{\le k-1}
\subseteq\G$ by \eqref{eq:in in,2}, 
whence 
$\nu_1(p)\ge\nu_2(p)>-\infty$ and $-\nu_1(p)=\nu_1(-p)\ge\nu_2(-p)=-\nu_2(p)$, with $\nu_2(-p)>-\infty$, which yields $\nu_1(p)=\nu_2(p)\in\R$. 
Thus, again for each $(\nu_1,\nu_2)\in\NN_+(\G)\times\NN_+(\G)$, conditions (ii) and (iii) of Theorem~\ref{th:dual cone} already suffice for $(\nu_1,\nu_2)\in\hG$. 
Moreover, in the case when $k=n+1$, one has $\PP^{\le k-1}=\PP^{\le n}=\PP_+^{k:n}$, because $p^{(n)}$ is constant and hence nondecreasing for any $p\in\PP^{\le n}$; therefore, in this case conditions (i) and (ii) of Theorem~\ref{th:dual cone} are just equivalent to each other. 
\end{remark}

\begin{proof}[Proof of Theorem~\ref{th:dual cone}]
That condition~(ii) in Theorem~\ref{th:dual cone} is necessary for \break 
$(\nu_1,\nu_2)\in\hG$ follows immediately from \eqref{eq:hG} and \eqref{eq:G}. 
Next, condition~(i) follows from condition~(ii) by Remark~\ref{rem:ii->i}. 
The necessity of condition~(iii) in Theorem~\ref{th:dual cone} follows immediately from \eqref{eq:G} and \eqref{eq:PP^+_{0:n}}. 
Thus, the ``only if'' part of Theorem~\ref{th:dual cone} is verified. 

Let us now consider the ``if'' part of the theorem. 
Suppose that conditions (i)--(iii) of Theorem~\ref{th:dual cone} hold. 
Take any $z\in(a,b)$ and then $y\in(a,z]$, as in \eqref{eq:z} and \eqref{eq:y}.  
Take also any $f\in\G
$. Then, by \eqref{eq:G}, $f\in\F^{k:n}$, and so, the function $f^{(n)}$ is nondecreasing and hence the corresponding Lebesgue--Stieltjes measure $\d f^{(n)}$ is nonnegative. 
So, by the definition of $R_{z,y}$ in \eqref{eq:P=,R=}, \eqref
{eq:h_i,y}, condition (iii) of Theorem~\ref{th:dual cone}, the Fubini theorem, and \eqref{eq:p^+_{t;j,m}>0}, 
\begin{equation}\label{eq:R compar}
	\nu_1(R_{z,y})\ge\nu_2(R_{z,y})\ge0. 
\end{equation} 
By \eqref{eq:P in}, \eqref{eq:G}, conditions (ii) of Theorem~\ref{th:dual cone} and $(\nu_1,\nu_2)\in\NN_+(\G)\times\NN_+(\G)$, 
and \eqref{eq:N_+^k}, 
\begin{equation}\label{eq:P compar}
	\nu_1(P_{z,y})\ge\nu_2(P_{z,y})>-\infty. 
\end{equation}
It follows by \eqref{eq:g_y=}, \eqref{eq:R compar}, and \eqref{eq:P compar} 
that 
\begin{equation}\label{eq:g_y ineq}
	\nu_1(g_y)\ge\nu_2(g_y). 
\end{equation}

Also, by \eqref{eq:g_y in}, $g_z\in\F_+^{k:n}$.  
So, by part~(I) of Proposition~\ref{prop:bounds}, 
there exists a $\w$-polynomial 
$p_z\in\PP^{\le k-1}$ 
such that 
(i) if $k$ is even, then $g_z\ge p_z$ 
and (ii) if $k$ is odd, then 
$g_z\ge p_z$ on $I\cap[z,\infty)$ and 
$g_z\le p_z$ on $I\cap(-\infty,z]$.
In view of \eqref{eq:betw}, one concludes that 
\begin{enumerate}[label=(\roman*)]
	\item if $k$ is even, then $g_y\ge p_z$ and $g_y\near{y\downarrow a}f$ on the interval $I$; 
	\item if $k$ is odd, then 
	\begin{enumerate}
	\item[(*)] $g_y\ge p_z$ and $g_y\near{y\downarrow a}f$ on the interval $I\cap[z,\infty)$;
	\item[(**)] $
	\label{bounds on g_y}
	g_y\le p_z$ and $g_y\sear{y\downarrow a}f$ on the interval $I\cap(-\infty,z]$.   
\end{enumerate}
\end{enumerate}
By condition (i) of Theorem~\ref{th:dual cone}, $\nu_1(p_z)=\nu_2(p_z)\in\R$.  
Thus, by the Lebesgue monotone convergence theorem and conclusions (i) and (ii) above, 
\begin{equation}\label{eq:converg}
\int\limits_{I\cap[z,\infty)}g_y\d\nu_j\underset{y\downarrow a}\longrightarrow\int\limits_{I\cap[z,\infty)}f\d\nu_j
\quad\text{and}\quad 
\int\limits_{I\cap(-\infty,z)}g_y\d\nu_j\underset{y\downarrow a}\longrightarrow\int\limits_{I\cap(-\infty,z)}f\d\nu_j  	
\end{equation}
for $j\in\{1,2\}$. 
In view of the condition $(\nu_1,\nu_2)\in\NN_+(\G)\times\NN_+(\G)$ 
and the definition 
\eqref{eq:N_+^k}, 
$\nu_1(f)\wedge\nu_2(f)>-\infty$ or, equivalently, $\nu_1(f_-)\vee\nu_2(f_-)<\infty$, whence 
\begin{equation}\label{eq:wedge}
\int\limits_{I\cap[z,\infty)}f\d\nu_1\bigwedge\int\limits_{I\cap[z,\infty)}f\d\nu_2\bigwedge
\int\limits_{I\cap(-\infty,z)}f\d\nu_1\bigwedge\int\limits_{I\cap(-\infty,z)}f\d\nu_2>-\infty. 	
\end{equation}
It follows from \eqref{eq:converg} and \eqref{eq:wedge} that $\nu_j(g_y)=\int_{I\cap[z,\infty)}g_y\d\nu_j+\int_{I\cap(-\infty,z)}g_y\d\nu_j\break \underset{y\downarrow a}\longrightarrow\int_{I\cap[z,\infty)}f\d\nu_j+\int_{I\cap(-\infty,z)}f\d\nu_j=\nu_j(f)$, again for $j\in\{1,2\}$; condition \eqref{eq:wedge} is used here to show that 
the integrals $\int_{I\cap[z,\infty)}f\d\nu_j$ and $\int_{I\cap(-\infty,z)}f\d\nu_j$ can be added. 
So, by \eqref{eq:g_y ineq}, $\nu_1(f)\ge\nu_2(f)$, for any $f\in\G$. 
In view of \eqref{eq:hG}, the proof of the ``if'' part of Theorem~\ref{th:dual cone} is now completed as well. 
\end{proof}


Theorem~\ref{th:dual cone} can be restated in the following ``basis'' form.   
\begin{theorem}\label{th:dual cone, b} 
Take any $(\nu_1,\nu_2)\in\NN_+(\G)\times\NN_+(\G)$, and also take any $s$ and $z$ in $I$. Then $(\nu_1,\nu_2)\in\hG$ if and only if all of the following conditions hold: 
\emph{ 
	\begin{enumerate}
	\item[(i')] $\nu_1(p_{s;0,i})=\nu_2(p_{s;0,i})\in\R$ 
	for all $i\in\intr0{k-1}$;  
	\item [(ii')]
	$\nu_1(p_{a,z;0:k:j})\ge\nu_2(p_{a,z;0:k:j})$ for all $j\in F_{k,n}$;  
	\item[(iii)] $\nu_1(p_{t;0,n}^+)\ge\nu_2(p_{t;0,n}^+)$ 
	for all $t\in I$. 
\end{enumerate}
}
\end{theorem}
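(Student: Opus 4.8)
The plan is to derive Theorem~\ref{th:dual cone, b} directly from Theorem~\ref{th:dual cone} by checking that the two triples of conditions are equivalent. Condition~(iii) is literally identical in both statements, so it suffices to prove that (i)$\,\Leftrightarrow\,$(i') and that, granting (i), one has (ii)$\,\Leftrightarrow\,$(ii'). Throughout I would use that, since $\PP^{\le k-1}\subseteq\G$ by \eqref{eq:in in,2} and $(\nu_1,\nu_2)\in\NN_+(\G)\times\NN_+(\G)$, admissibility \eqref{eq:N_+^k} applied to both $p$ and $-p$ forces $\nu_1(p),\nu_2(p)\in\R$ for every $p\in\PP^{\le k-1}$; hence $\nu_1$ and $\nu_2$ restrict to honest \emph{real-valued} linear functionals on the finite-dimensional space $\PP^{\le k-1}$.

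For (i)$\,\Leftrightarrow\,$(i'), I would invoke the basis property established around \eqref{eq:basis} (with shift index $0$ and reference point $s$, and with ``$k$'' there replaced by $k-1$): the $\w$-polynomials $p_{s;0,0},\dots,p_{s;0,k-1}$ form a basis of $\PP^{\le k-1}$. Since $\nu_1$ and $\nu_2$ are real-valued and linear on $\PP^{\le k-1}$, they coincide on all of $\PP^{\le k-1}$ exactly when they coincide on this basis; that is, (i) holds iff (i') holds.

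Now grant (i) and turn to (ii)$\,\Leftrightarrow\,$(ii'). The forward direction is immediate, since $p_{a,z;0:k:j}\in\PP_+^{k:n}$ for $j\in F_{k,n}$ by \eqref{eq:p_a in}. For the converse the key step is a \emph{nonnegative} decomposition of an arbitrary $p\in\PP_+^{k:n}=\PP^{\le n}\cap\F_+^{k:n}$. Applying Lemma~\ref{lem:taylor-k} to $p$ at level $j=k$ and using that $p^{(n)}$ is constant (so the remainder $h_k=\int_I\d p^{(n)}(t)\,p_{t;k,n}^+$ vanishes), I get $D^k p=p_k=\sum_{j\in F_{k,n}}p^{(j)}(a+)\,p_{a;k,j}$ by \eqref{eq:p_k=}. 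Because $D^k p_{a,z;0:k:j}=p_{a;k,j}$ (from \eqref{eq:D_i p} and \eqref{eq:p_{a,z;k:k:j}}), the $\w$-polynomial $r:=p-\sum_{j\in F_{k,n}}p^{(j)}(a+)\,p_{a,z;0:k:j}$ satisfies $D^k r=0$, i.e.\ $r\in\PP^{\le k-1}$, while the coefficients $c_j:=p^{(j)}(a+)$ lie in $[0,\infty)$ by \eqref{eq:f^j(a+)}. Thus
\begin{equation*}
	p=\sum_{j\in F_{k,n}}c_j\,p_{a,z;0:k:j}+r,\qquad c_j\ge0,\quad r\in\PP^{\le k-1}.
\end{equation*}

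To conclude I would integrate this identity against $\nu_1$ and $\nu_2$. Each $p_{a,z;0:k:j}$ is nonnegative (as noted after \eqref{eq:D_i p}), so $\sum_j c_j\,p_{a,z;0:k:j}\ge0$ and monotone convergence gives $\nu_i\big(\sum_j c_j\,p_{a,z;0:k:j}\big)=\sum_j c_j\,\nu_i(p_{a,z;0:k:j})\in[0,\infty]$; since $\nu_i(r)\in\R$, additivity yields $\nu_i(p)=\sum_j c_j\,\nu_i(p_{a,z;0:k:j})+\nu_i(r)$ for $i\in\{1,2\}$. Combining $c_j\ge0$ with (ii') term-by-term and with (i) for $r$ then gives $\nu_1(p)\ge\nu_2(p)$, which is (ii). The degenerate case $k=n+1$ needs a separate word: there $F_{k,n}=\emptyset$ and $\PP_+^{k:n}=\PP^{\le n}=\PP^{\le k-1}$, so the decomposition is empty and everything collapses to (i)$\,\Leftrightarrow\,$(i'). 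The one delicate point throughout is the handling of possibly $+\infty$-valued integrals in this last step, which is precisely why I lean on the nonnegativity of the $p_{a,z;0:k:j}$ and on the admissibility-driven finiteness on $\PP^{\le k-1}$; with those secured, the equivalence of the two triples of conditions, and hence the theorem, follows from Theorem~\ref{th:dual cone}.
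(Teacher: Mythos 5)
Your proposal is correct and follows essentially the same route as the paper: the paper likewise reduces Theorem~\ref{th:dual cone, b} to Theorem~\ref{th:dual cone} via the basis property \eqref{eq:basis} for the equivalence of (i) and (i'), and then merely asserts that ``one can similarly see'' that (i')${}\wedge{}$(ii') is equivalent to condition (ii); your nonnegative decomposition $p=\sum_{j\in F_{k,n}}p^{(j)}(a+)\,p_{a,z;0:k:j}+r$ with $r\in\PP^{\le k-1}$, together with the care you take about possibly infinite integrals, is exactly the detail that assertion leaves implicit. The paper also records an alternative shortcut --- rerunning the ``if'' part of Theorem~\ref{th:dual cone} and observing that condition (ii) was used there only to obtain \eqref{eq:P compar}, which follows directly from (i'), (ii'), \eqref{eq:f^j(a+)}, and the explicit form of $P_{z,y}$ in \eqref{eq:P=,R=} --- but your argument does not need it.
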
 
\noindent\rule{0pt}{1pt}\big(Recall here the definitions \eqref{eq:p_tjm} and \eqref{eq:p_{a,z;k:k:j}}-\eqref{eq:p_{a,z;i:k:j}}-\eqref{eq:F_kn}  concerning the $\w$-poly\-nomials $p_{s;0,i}$ for $i\in\intr0{k-1}$ and $p_{a,z;0:k:j}$ for $j\in F_{k,n}$.\big)

\begin{proof}[Proof of Theorem~\ref{th:dual cone, b}]
First here, by \eqref{eq:basis}, the $\w$-polynomials $p_{s;0,0},\dots,p_{s;0,k-1}$ constitute a basis of the linear space $\PP^{\le k-1}$, and so, condition (i') of Theorem~\ref{th:dual cone, b} is equivalent to condition (i) of Theorem~\ref{th:dual cone}.  

One can similarly see that the conjunction of conditions (i') and (ii') of Theorem~\ref{th:dual cone, b} is equivalent to condition (ii) of Theorem~\ref{th:dual cone} \big(which latter is in turn equivalent to the conjunction of conditions (i) and (ii) of Theorem~\ref{th:dual cone}\big). 

Alternatively, one can note that, by \eqref{eq:p_a in} and condition (ii) of Theorem~\ref{th:dual cone}, for each $(\nu_1,\nu_2)\in\NN_+(\G)\times\NN_+(\G)$ condition (ii') of Theorem~\ref{th:dual cone, b} is necessary for $\nu\in\hG$. 
On the other hand, condition (ii) of Theorem~\ref{th:dual cone} was used in the proof of the ``if'' part of Theorem~\ref{th:dual cone} only to obtain the conclusion \eqref{eq:P compar}. 
However, the same conclusion can be obviously obtained by using the definition of $P_{z,y}$ in 
\eqref{eq:P=,R=}, formula \eqref{eq:f^j(a+)}, and conditions (i') and (ii') of Theorem~\ref{th:dual cone, b} -- instead of \eqref{eq:P in} and condition (ii) of Theorem~\ref{th:dual cone}. 
Thus, Theorem~\ref{th:dual cone, b} is proved. 
\end{proof}

In the unit-gauges case, Theorem~\ref{th:dual cone, b} immediately results in the following corollaries, in view of \eqref{eq:p_t,unit}, \eqref{eq:p_a,unit,1}, \eqref{eq:p_a,unit,2}, and \eqref{eq:F_kn,unit}.

\begin{corollary}\label{cor:dual,powers,1} 
Suppose that $w_0=w_1=\dots=1$. 
Suppose also that $a=-\infty$. 
Take any $(\nu_1,\nu_2)\in\NN_+(\G)\times\NN_+(\G)$, and also take any real $s$ and $z$. Then $(\nu_1,\nu_2)\in\hG$ if and only if all of the following conditions hold: 
\emph{ 
	\begin{enumerate}
	\item[(i)] 
$\int_I(x-s)^i\,\nu_1(\d x)=\int_I(x-s)^i\,\nu_2(\d x)\in\R$ 
for all $i\in\intr0{k-1}$;  
	\item [(ii)]
	\rule{0pt}{10pt}$\int_I(x-z)^k\,\nu_1(\d x)\ge\int_I(x-z)^k\,\nu_2(\d x)$ if $k\le n$;   
	\item[(iii)] 
	\rule{0pt}{10pt}$\int_I(x-t)_+^n\,\nu_1(\d x)\ge\int_I(x-t)_+^n\,\nu_2(\d x)$ 
	for all $t\in I$. 
\end{enumerate}
}
\end{corollary}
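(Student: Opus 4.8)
The plan is to read off Corollary~\ref{cor:dual,powers,1} from Theorem~\ref{th:dual cone, b} by substituting the closed-form unit-gauge expressions for the $\w$-polynomials $p_{s;0,i}$, $p_{a,z;0:k:j}$, and $p_{t;0,n}^+$ into conditions (i$'$), (ii$'$), and (iii) of that theorem. First I would pick arbitrary points $s_0,z_0\in I$ (which exist since $I$ has nonzero length) and apply Theorem~\ref{th:dual cone, b} with $s=s_0$, $z=z_0$, so that it remains only to translate the three conditions into the stated moment conditions using \eqref{eq:p_t,unit}, \eqref{eq:p_a,unit,2}, and \eqref{eq:F_kn,unit}.

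For condition (i$'$) I would invoke \eqref{eq:p_t,unit}, which in the unit-gauge case gives $p_{s_0;0,i}(x)=(x-s_0)^i/i!$; dividing out the positive factor $1/i!$ turns $\nu_1(p_{s_0;0,i})=\nu_2(p_{s_0;0,i})\in\R$ into the equality $\int_I(x-s_0)^i\,\nu_1(\d x)=\int_I(x-s_0)^i\,\nu_2(\d x)\in\R$ for $i\in\intr0{k-1}$. For condition (ii$'$) the decisive input is \eqref{eq:F_kn,unit}: since $a=-\infty$, it yields $F_{k,n}=\{k\}$ when $k\le n$ and $F_{k,n}=\emptyset$ when $k=n+1$. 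In the first case the only $\w$-polynomial occurring is $p_{a,z_0;0:k:k}(x)=(x-z_0)^k/k!$ by \eqref{eq:p_a,unit,2}, and clearing $1/k!$ gives $\int_I(x-z_0)^k\,\nu_1(\d x)\ge\int_I(x-z_0)^k\,\nu_2(\d x)$; in the second case condition (ii$'$) is vacuous, which is exactly why the corollary attaches the clause ``if $k\le n$'' to its condition (ii). Finally, condition (iii) needs no translation beyond \eqref{eq:p_t,unit}: since $p_{t;0,n}^+(x)=(x-t)_+^n/n!$, clearing $1/n!$ reproduces $\int_I(x-t)_+^n\,\nu_1(\d x)\ge\int_I(x-t)_+^n\,\nu_2(\d x)$ verbatim. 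Formula \eqref{eq:p_a,unit,1} is not needed here, as it pertains to the case $a>-\infty$.

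The one step that is not a pure substitution is the passage from the fixed centers $s_0,z_0\in I$ delivered by Theorem~\ref{th:dual cone, b} to the \emph{arbitrary} real $s,z$ allowed in the corollary. I would handle this by the elementary observation that, for each fixed degree, the monomials $(x-s)^0,\dots,(x-s)^{k-1}$ span the same finite-dimensional space of polynomial functions on $I$ regardless of $s$, and that finiteness of $\int_I|x-s_0|^{k-1}\,\nu_j(\d x)$ is equivalent to finiteness of $\int_I|x|^{k-1}\,\nu_j(\d x)$; hence condition (i) holds for $s_0$ iff it holds for every real $s$. Granting (i), the integrands $(x-z)^k$ and $(x-z_0)^k$ differ by a polynomial of degree $\le k-1$ against which $\nu_1$ and $\nu_2$ have equal finite integrals, so the inequality in (ii) is independent of the center $z$ as well, the only care being the bookkeeping of the extended-real value $+\infty$, which is allowed on either side since $(\nu_1,\nu_2)\in\NN_+(\G)\times\NN_+(\G)$ only rules out the value $-\infty$.

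I do not expect a genuine obstacle: the content is entirely carried by Theorem~\ref{th:dual cone, b}, and the corollary is its transcription in a concrete basis. The only delicate bookkeeping will be (a) preserving the finiteness clause ``$\in\R$'' in (i) when clearing the factors $1/i!$, and (b) the degenerate case $n=0$ (which forces $k=1$ by \eqref{eq:k le n+1}), where \eqref{eq:p_t,unit} gives $p_{t;0,0}^+(x)=\ii{x\ge t}$, so that condition (iii) is to be read with the convention $(x-t)_+^0:=\ii{x\ge t}$.
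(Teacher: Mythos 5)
Your proposal matches the paper's own derivation: the paper obtains Corollary~\ref{cor:dual,powers,1} precisely by substituting the unit-gauge formulas \eqref{eq:p_t,unit}, \eqref{eq:p_a,unit,2}, and \eqref{eq:F_kn,unit} into conditions (i$'$), (ii$'$), (iii) of Theorem~\ref{th:dual cone, b}, stating that the corollary ``immediately results'' from them. Your additional bookkeeping --- extending the centers from $s,z\in I$ to arbitrary real $s,z$ via the binomial/basis-change argument and condition (i), and reading $(x-t)_+^0$ as $\ii{x\ge t}$ when $n=0$ --- is correct and merely makes explicit what the paper leaves implicit.
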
  

\begin{corollary}\label{cor:dual,powers,2} 
Suppose that $w_0=w_1=\dots=1$. 
Suppose also that $a>-\infty$. 
Take any $(\nu_1,\nu_2)\in\NN_+(\G)\times\NN_+(\G)$, and also take any real $s$. Then $(\nu_1,\nu_2)\in\hG$ if and only if all of the following conditions hold: 
\emph{ 
	\begin{enumerate}
	\item[(i)] 
$\int_I(x-s)^i\,\nu_1(\d x)=\int_I(x-s)^i\,\nu_2(\d x)\in\R$ 
for all $i\in\intr0{k-1}$;  
	\item [(ii)]
	\rule{0pt}{10pt}$\int_I(x-a)^j\,\nu_1(\d x)\ge\int_I(x-a)^j\,\nu_2(\d x)$ for all $j\in\intr kn$;   
	\item[(iii)] 
	\rule{0pt}{10pt}$\int_I(x-t)_+^n\,\nu_1(\d x)\ge\int_I(x-t)_+^n\,\nu_2(\d x)$ 
	for all $t\in I$. 
\end{enumerate}
}
\end{corollary}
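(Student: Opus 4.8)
The plan is to read off Corollary~\ref{cor:dual,powers,2} directly from the ``basis'' characterization in Theorem~\ref{th:dual cone, b}, substituting the closed-form expressions for the relevant $\w$-polynomials available in the unit-gauges case. Theorem~\ref{th:dual cone, b} asserts that $(\nu_1,\nu_2)\in\hG$ if and only if its conditions (i$'$), (ii$'$), (iii) hold, phrased through the $\w$-polynomials $p_{s;0,i}$ (for $i\in\intr0{k-1}$), $p_{a,z;0:k:j}$ (for $j\in F_{k,n}$), and $p_{t;0,n}^+$ (for $t\in I$). With $w_0=w_1=\dots=1$ and $a>-\infty$, formula \eqref{eq:p_t,unit} gives $p_{s;0,i}(x)=(x-s)^i/i!$ and $p_{t;0,n}^+(x)=(x-t)_+^n/n!$. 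Dividing out the positive constants $1/i!$ and $1/n!$, condition (i$'$) turns into condition (i) of the corollary and condition (iii) of the theorem into condition (iii) of the corollary; here one may allow $s$ to be an arbitrary real number, since the monomials $(x-s)^i$, $i\in\intr0{k-1}$, span $\PP^{\le k-1}$ for every real $s$ regardless of whether $s\in I$. Thus only condition (ii$'$) requires any work.

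For condition (ii$'$), I would first use \eqref{eq:F_kn,unit} in the subcase $a>-\infty$ to identify the index set as $F_{k,n}=\intr kn$, so that (ii$'$) ranges over exactly $j\in\intr kn$, matching condition (ii). Then, setting $i=0$ in \eqref{eq:p_a,unit,1}, I would write
\begin{equation*}
	p_{a,z;0:k:j}=\frac{(x-a)^j}{j!}-r_j,\qquad
	r_j(x):=\frac1{j!}\sum_{\ga=0}^{k-1}\binom{j}{\ga}\,(z-a)^{j-\ga}(x-z)^\ga,
\end{equation*}
where $r_j$ is a polynomial of degree at most $k-1$, i.e.\ $r_j\in\PP^{\le k-1}$ in the unit-gauges case. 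Since condition (i) (equivalently condition (i$'$)) forces $\nu_1(r_j)=\nu_2(r_j)\in\R$, the finite correction term $r_j$ cancels from both sides of $\nu_1(p_{a,z;0:k:j})\ge\nu_2(p_{a,z;0:k:j})$, which therefore collapses to $\int_I(x-a)^j\,\nu_1(\d x)\ge\int_I(x-a)^j\,\nu_2(\d x)$, i.e.\ condition (ii). Carrying this equivalence over all $j\in\intr kn$ completes the identification.

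A few well-definedness checks would accompany the cancellation. Because $a=\inf I$, the integrand $(x-a)^j$ is nonnegative on $I$, so each integral $\int_I(x-a)^j\,\nu_i(\d x)$ automatically exists in $[0,\infty]$; together with the finiteness $\nu_i(r_j)\in\R$ supplied by condition (i), the splitting $\nu_i(p_{a,z;0:k:j})=\tfrac1{j!}\,\nu_i\big((x-a)^j\big)-\nu_i(r_j)$ is legitimate, each side lying in $(-\infty,\infty]$. I do not anticipate a genuine obstacle, as the content is purely bookkeeping; the only point deserving care --- and the closest thing to a ``hard'' step --- is confirming that the subtracted polynomial $r_j$ in \eqref{eq:p_a,unit,1} really has degree $\le k-1$, so that it is absorbed by condition (i) and the auxiliary base point $z$ drops out of condition (ii) entirely. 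This last fact is precisely why, in contrast with Corollary~\ref{cor:dual,powers,1}, the present statement needs only the single free parameter $s$ and no $z$.
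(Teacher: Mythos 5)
Your proposal is correct and follows exactly the route the paper intends: the paper derives Corollary~\ref{cor:dual,powers,2} directly from Theorem~\ref{th:dual cone, b} "in view of" \eqref{eq:p_t,unit}, \eqref{eq:p_a,unit,1}, and \eqref{eq:F_kn,unit}, and your write-up just fills in the bookkeeping, including the one genuinely worthwhile observation (that the correction term in \eqref{eq:p_a,unit,1} has degree $\le k-1$ and is absorbed by condition~(i), which is why $z$ disappears from the statement). No gaps.
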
  

One may note here that the part of condition~(ii) in Corollary~\ref{cor:dual,powers,2} for $j=n$ follows from condition~(iii) there. Therefore, one may replace the specification $j\in\intr kn$ in condition~(ii) in Corollary~\ref{cor:dual,powers,2} by $j\in\intr k{n-1}$.

Corollary~\ref{cor:dual,powers,1} immediately results in the following statement, which will be useful in probabilistic applications such as ones considered in \cite{asymm,normal,left-arxiv}. 

\begin{corollary}\label{cor:dual,probab} 
Suppose that $w_0=w_1=\dots=1$. 
Suppose also that $a=-\infty$. 
Let $X$ and $Y$ be any r.v.'s with values in the interval $I$ and with distributions belonging to the admissible set 
$\NN_+(\G)$ (characterized in Proposition~\ref{prop:N_+^k}). Take any real $s$ and $z$. Then
\begin{equation*}
	\E f(X)\ge\E f(Y)\text{\quad for all }f\in\G
\end{equation*}
if and only if all of the following conditions hold: 
\emph{ 
	\begin{enumerate}
	\item[(i)] 
$\E(X-s)^i=\E(Y-s)^i\in\R$ 
for all $i\in\intr1{k-1}$;  
	\item [(ii)]
	$\E(X-z)^k\ge\E(Y-z)^k$  if $k\le n$;   
	\item[(iii)] 
	$\E(X-t)_+^n\ge\E(Y-t)_+^n$  
	for all $t\in I$. 
\end{enumerate}
}
\end{corollary}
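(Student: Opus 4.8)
The plan is to derive this corollary as a direct specialization of Corollary~\ref{cor:dual,powers,1}, reading the abstract pair of nonnegative measures $(\nu_1,\nu_2)$ there as the pair of probability distributions of the random variables $X$ and $Y$. First I would let $\nu_1$ and $\nu_2$ denote the distributions of $X$ and $Y$ on the Borel $\si$-algebra $\B$ over $I$; these are probability measures, hence in particular finite members of $\NN_+$, and by hypothesis they both lie in the admissible set $\NN_+(\G)$. With this identification one has $\E f(X)=\int_I f\,\d\nu_1=\nu_1(f)$ and $\E f(Y)=\nu_2(f)$ for every $f\in\G$, where these integrals exist in $(-\infty,\infty]$ precisely because $\nu_1,\nu_2\in\NN_+(\G)$. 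Consequently, by the definition \eqref{eq:hG} of the dual cone and the hypothesis $(\nu_1,\nu_2)\in\NN_+(\G)\times\NN_+(\G)$, the condition that $\E f(X)\ge\E f(Y)$ for all $f\in\G$ is exactly the assertion $(\nu_1,\nu_2)\in\hG$.

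Next I would invoke Corollary~\ref{cor:dual,powers,1}, whose hypotheses (unit gauges $w_0=w_1=\dots=1$, left endpoint $a=-\infty$, and $(\nu_1,\nu_2)\in\NN_+(\G)\times\NN_+(\G)$) are all in force here. That corollary characterizes $(\nu_1,\nu_2)\in\hG$ by its three conditions (i)--(iii), which upon substituting $\nu_1(g)=\E g(X)$ and $\nu_2(g)=\E g(Y)$ turn into conditions (i)--(iii) of the present statement, with the sole formal difference that the moment equalities in part~(i) of Corollary~\ref{cor:dual,powers,1} range over $i\in\intr0{k-1}$ rather than over $i\in\intr1{k-1}$.

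The hard part here is essentially nonexistent, the whole argument being a translation of notation; the one point that warrants a brief remark is the discrepancy in the index range of condition~(i). For $i=0$ the equality in part~(i) of Corollary~\ref{cor:dual,powers,1} reads $\int_I(x-s)^0\,\nu_1(\d x)=\int_I(x-s)^0\,\nu_2(\d x)\in\R$, i.e.\ $\nu_1(I)=\nu_2(I)\in\R$; but since $\nu_1$ and $\nu_2$ are probability measures one has $\nu_1(I)=\nu_2(I)=1$, so this $i=0$ instance holds automatically and may be dropped. Restricting to $i\in\intr1{k-1}$ then yields exactly condition~(i) as stated, and the equivalence transfers verbatim, which completes the proof.
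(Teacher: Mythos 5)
Your proposal is correct and matches the paper's own (implicit) argument: the paper simply states that Corollary~\ref{cor:dual,probab} ``immediately results'' from Corollary~\ref{cor:dual,powers,1}, and your translation of $(\nu_1,\nu_2)$ into the distributions of $X$ and $Y$ is exactly that specialization. Your observation that the $i=0$ instance of condition~(i) is automatic because both distributions are probability measures correctly accounts for the only visible discrepancy (the index range starting at $1$ rather than $0$).
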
 

Clearly, similar probabilistic formulations 
can be immediately obtained based on Theorems~\ref{th:dual cone} and \ref{th:dual cone, b} and Corollary~\ref{cor:dual,powers,2}. 

By using the reflection transformation $\R\ni x\mapsto -x$, one immediately obtains the corresponding results for the ``reflected'' class of functions 
\begin{equation}\label{eq:F_-}
	\F_-^{k:n}(I):=\{f^-\colon f\in\F_+^{k:n}(-I)\}, 
\end{equation}
where $-I:=\{-x\colon x\in I\}$ and $f^-(x):=f(-x)$ for all $x\in I$. 
For instance, in view of part~(i) of Proposition~\ref{prop:N_+^k}, one has the following ``reflected'' counterpart of Corollary~\ref{cor:dual,probab}, where for simplicity we shall consider only the case when $k\le n$. 

\begin{corollary}\label{cor:dual,probab,refl} 
Suppose that $k\le n$ and $b=\infty$. 
Let $X$ and $Y$ be any r.v.'s with values in the interval $I$ and 
such that $\E p(X)\wedge\E p(Y)>-\infty$ for all (usual) polynomials $p$ of degree $\le k$ with $(-1)^k p^{(k)}\ge0$.  
Take any real $s$ and $z$. Then
\begin{equation*}
	\E f(X)\ge\E f(Y)\text{\quad for all }f\in\F_-^{k:n}(I)
\end{equation*}
if and only if all of the following conditions hold: 
\emph{ 
	\begin{enumerate}
	\item[(i)] 
$\E(X-s)^i=\E(Y-s)^i\in\R$ 
for all $i\in\intr1{k-1}$;  
	\item [(ii)]
	$\E(z-X)^k\ge\E(z-Y)^k$;   
	\item[(iii)] 
	$\E(t-X)_+^n\ge\E(t-Y)_+^n$  
	for all $t\in I$. 
\end{enumerate}
}
\end{corollary}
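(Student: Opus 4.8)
The plan is to derive this reflected statement from Corollary~\ref{cor:dual,probab} by the change of variable $x\mapsto-x$, so that essentially no new analysis is needed. First I would introduce the reflected random variables $\tilde X:=-X$ and $\tilde Y:=-Y$, which take values in the interval $-I$. Since $b=\sup I=\infty$, the left endpoint of $-I$ is $\inf(-I)=-\sup I=-\infty$, so the hypothesis ``$a=-\infty$'' of Corollary~\ref{cor:dual,probab} is satisfied for the reflected interval. By the definition \eqref{eq:F_-} of $\F_-^{k:n}(I)$, every $f\in\F_-^{k:n}(I)$ is of the form $f=g^-$ with $g\in\F_+^{k:n}(-I)$, and then $\E f(X)=\E g(-X)=\E g(\tilde X)$ and likewise $\E f(Y)=\E g(\tilde Y)$. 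Hence ``$\E f(X)\ge\E f(Y)$ for all $f\in\F_-^{k:n}(I)$'' is identical to ``$\E g(\tilde X)\ge\E g(\tilde Y)$ for all $g\in\F_+^{k:n}(-I)$'', to which Corollary~\ref{cor:dual,probab} applies with $\G=\F_+^{k:n}(-I)$ in the unit-gauges setting over $-I$.

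Next I would match the admissibility hypotheses. For a polynomial $p$ of degree $\le k$ put $\tilde p(x):=p(-x)$; then $\tilde p^{(k)}=(-1)^k\,p^{(k)}(-\cdot)$, so $p^{(k)}\ge0$ if and only if $(-1)^k\tilde p^{(k)}\ge0$, and as $p$ ranges over $\PP_+^{\le k}$ the reflected polynomial $\tilde p$ ranges over exactly the polynomials of degree $\le k$ with $(-1)^k\tilde p^{(k)}\ge0$. Since $\E p(\tilde X)=\E\tilde p(X)$, the stated hypothesis that $\E\tilde p(X)\wedge\E\tilde p(Y)>-\infty$ for all such $\tilde p$ is precisely the statement that the distributions of $\tilde X$ and $\tilde Y$ belong to $\NN_+(\G)$; this is where I invoke part~(i) of Proposition~\ref{prop:N_+^k}, which applies because $k\le n$ and which characterizes $\NN_+(\G)$ via $\nu(p)>-\infty$ for all $p\in\PP_+^{\le k}$.

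Finally I would apply Corollary~\ref{cor:dual,probab} to $\tilde X,\tilde Y$ over $-I$ with the auxiliary points $s':=-s$ and $z':=-z$, and translate its three conditions back to $X,Y$. The substitution $t=-t'$ converts the family $t'\in-I$ appearing in condition (iii) into $t\in I$, and the identities $\tilde X-s'=-(X-s)$, $\tilde X-z'=z-X$, and $\tilde X-t'=t-X$ give $(\tilde X-s')^i=(-1)^i(X-s)^i$, $(\tilde X-z')^k=(z-X)^k$, and $(\tilde X-t')_+^n=(t-X)_+^n$ (and similarly for $Y$). These turn conditions (i)--(iii) of Corollary~\ref{cor:dual,probab} into conditions (i)--(iii) as stated here; in condition (i) the common factor $(-1)^i$ cancels across the equality and is harmless.

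The argument is essentially sign bookkeeping, so I do not expect a serious obstacle. The one point needing genuine care is the admissibility matching in the second step: I must confirm that the hypothesis phrased in terms of polynomials with $(-1)^kp^{(k)}\ge0$ corresponds, under reflection, exactly to membership of the laws of $\tilde X,\tilde Y$ in $\NN_+(\G)$ over $-I$. Restricting to $k\le n$, as the statement does, is precisely what lets me use the clean characterization in part~(i) of Proposition~\ref{prop:N_+^k} rather than its more delicate parts~(ii)--(iii).
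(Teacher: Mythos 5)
Your proposal is correct and follows exactly the route the paper intends: the paper derives this corollary "by using the reflection transformation $x\mapsto-x$" from Corollary~\ref{cor:dual,probab}, invoking part~(i) of Proposition~\ref{prop:N_+^k} for the admissibility matching, which is precisely your argument (the paper merely leaves the sign bookkeeping implicit).
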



\subsection{Relations with Tchebycheff systems}\label{tcheb}

A result similar to a special case of Theorem~\ref{th:dual cone, b} was stated 
as two separate theorems in \cite[page~407]{karlin-studden}: Theorem~5.1 for $k=n+1$ and Theorem~5.2 for $k\in\intr1n$, in the notation of the present paper; the symbol $k$ in the present paper corresponds to $k+1$ in \cite[page~407, Theorem~5.2]{karlin-studden}. 
The latter two theorems in \cite{karlin-studden} are based on the papers \cite{karlin-novikoff} and \cite{ziegler}, respectively. 
In the case when $I$ is a finite open interval -- so that $I=(a,b)$, $-\infty<a<b<\infty$ -- 
Theorems~5.1 and 5.2 in \cite[page~407]{karlin-studden} characterize convex cones that are in a certain sense dual to the cones denoted in \cite{karlin-studden} as $C(u_0,\dots,u_n)$ and $\bigcap_{j=k}^n C(u_0,\dots,u_j)$, respectively, where $u_i$ coincides with the $\w$-polynomial $p_{a;0,i}$ and -- somewhat tacitly but crucially -- is assumed to be finite, for each $i\in\intr0n$; cf.\ the definition of the $u_i$'s in \cite[page~381, formula~(2.1)]{karlin-studden} and the definition of the $p_{t;j,m}$'s in \eqref{eq:p_tjm} (in the present paper). 
As stated in \cite[page~381]{karlin-studden}, for each $i$ the function $w_i$ is assumed to be of class $C^{n-i}$ on the closed interval $[a,b]$ (which is the closure of $I=(a,b)$ in $\R$). 
It is shown in \cite[page~379, Theorem~1.2]{karlin-studden} that these functions $u_0,\dots,u_n$ in \cite{karlin-studden} constitute a special case of an extended complete Tchebycheff system (\emph{ECT}-system) on $[a,b]$, as defined at the top of page~375 in \cite{karlin-studden}. 
By \cite[page~386, Theorem~2.1]{karlin-studden} and \eqref{eq:F}, the cone $C(u_0,\dots,u_n)$ in \cite{karlin-studden} essentially coincides (up to certain smoothness conditions) with our cone $\F_+^{n+1:n}$, and so, for $k\in\intr1n$ the cone $\bigcap_{j=k}^n C(u_0,\dots,u_j)$ essentially coincides with $\F_+^{k+1:n}$. 

There are a number of differences between Theorems~5.1 and 5.2 in \cite[page~407]{karlin-studden} and our Theorem~\ref{th:dual cone, b}. One is that the dual cone in \cite{karlin-studden} is defined in a traditional manner, as a set of signed measures $\nu$ rather than a set of of ordered pairs $(\nu_1,\nu_2)$ of nonnegative measures; cf.\ the beginning of the discussion in Subsection~\ref{admissible}; at that, the total variation of $\nu$ in Theorems~5.1 and 5.2 in \cite[page~407]{karlin-studden} was assumed to be finite. Also, as mentioned before, these theorems were obtained under the additional smoothness condition $w_i\in C^{n-i}[a,b]$ for all $i\in\intr0n$ (to be compared with the condition in the present paper that the $w_i$'s be Borel-measurable and locally bounded). 
Moreover, in these theorems in \cite{karlin-studden} the interval $I$ is assumed to be finite and open, whereas we allow $I$ to be any interval in $\R$. 
Further, our treatment appears to be more direct, as we define the classes $\F_+^{k:n}$ of multiply monotone functions directly in terms of the gauge functions $w_i$, rather than in terms of specific $\w$-polynomials $u_i=p_{a;0,i}$. 
Our method of proof is also more direct, without 
an explicit characterization or use of the extreme rays of the cones $\F_+^{k:n}$. 

However, the most significant difference between Theorems~5.1 and 5.2 in \cite[page~407]{karlin-studden} and our Theorem~\ref{th:dual cone, b} is that the former ones impose the mentioned additional, ostensibly innocuous condition of the finiteness of the functions $u_i=p_{a;0,i}$ for all $i\in\intr0n$. This additional condition rules out, among others, the unit-gauges case with $a=-\infty$ -- 
the most important case in applications such as ones considered in \cite{asymm,normal,left-arxiv}, which in fact motivated the present paper.

\section{Illustrations and applications}\label{appls}

\subsection{Solutions of compositional systems of linear differential inequalities}\label{diff-ineq}
The condition, in the definition \eqref{eq:F} of the class $\F_+^{k:n}$ of functions, that the generalized derivatives $f^{(j)}$ are nondecreasing for all $j\in\intr{k-1}n$ can be obviously transcribed as a system of linear differential inequalities $f^{(i)}\ge0$ for $i\in\intr k{n+1}$, where (say) $f^{(n+1)}(x):=\liminf_{y\downarrow x}\big(f^{(n)}(y)-f^{(n)}(x)\big)/(y-x)$ for $x\in I\setminus\{b\}$. 
In turn, the generalized derivatives $f^{(i)}$ can be obviously rewritten in terms of the usual derivatives $f,f',f'',\dots$ of the function $f$. 
Thus, ... provides a description of the convex cone of solutions of such systems of linear differential inequalities, and ... provides a dual description of the same. 


As an illustration of what has just been stated, turn here to  
a particular case of systems of gauge functions considered in the proof of Proposition~\ref{prop:M}. 
Namely, let $I=\R$, $k=2$, $n=5$, and $w_j(x)=e^{\la_j x}$ for $j\in\intr0\infty$, where $(\la_0,\dots,\la_5)=(0, 0, 0, -1, 2, 1)$. Here $x$ denotes any real number. In accordance with \eqref{eq:f^j}, for these gauge functions $w_j$ and function $f\in\L^n=\L^5$, one has the generalized derivatives 
\begin{multline*}
	(f_\w^{(0)}(x),\dots,f_\w^{(5)}(x))
	=\big(f(x),f'(x),f''(x),e^x f^{(3)}(x), \\ 
	e^{-x}(f^{(4)}(x)+f^{(3)}(x)),e^{-2 x}
   (f^{(5)}(x)-f^{(3)}(x))\big),  
\end{multline*}
where $f^{(j)}$ without the subscript ${}_\w$ denotes the usual $j$th derivative of $f$, with respect to the unit gauge functions. 
So, the condition $f\in\F_+^{k:n}$ means that $f\in\L^5$ is a solution to the following system of differential inequalities: 
\begin{multline}\label{eq:syst-ineqs}
	f'\ge0,\quad f''\ge0,\quad f^{(3)}\ge0,\quad  \\ 
	f^{(4)}+f^{(3)}\ge0,\quad f^{(5)}-f^{(3)}\ge0,\quad  f^{(6)}-2 f^{(5)}-f^{(4)}+2 f^{(3)}\ge0.  
\end{multline}
Here all the coefficients of the derivatives are constant, because the gauge functions $w_j$ are all exponential; for other gauge functions, that will not be the case. 

To solve this system of differential inequalities, note that, in view of \eqref{eq:p_tjm}, \eqref{eq:any M}, and  \eqref{eq:p_{a,z;k:k:j}}--\eqref{eq:p_{a,z;i:k:j}}, here 
\begin{equation}
	\big(p_{0;0,i}(x)\colon i\in\intr0{k-1}\,\big)=(1,x), 
\end{equation}
\begin{equation}
p_{t;0,n}(x)=\tfrac1{24}\,e^{2 t}\,  \big(4 ( 1 - e^{ t - x}) + (e^{2 (x - t)} - 1) - 
   12 (e^{x - t} - 1) + 6 (1 + x - t) (x - t)\big) 
\end{equation}
for $t\in\R$, 
\begin{equation}
	\big(p_{-\infty;k,j}(x)\colon j\in\intr kn\,\big)=(1,\infty,e^x/2,e^{2x}/6), 
\end{equation}
$F_{k,n}=F_{2,5}=\{k, k+2, k+3\}=\{2, 4, 5\}$, 
and 
\begin{equation}
	\big(p_{-\infty,0;0,k,j}(x)\colon j\in F_{k,n}\,\big)=\big(\tfrac{1}{2}\,x^2,\tfrac{1}{2}\,(e^x-1-x),\tfrac{1}{24}\, 
	(e^{2 x}-1-2x)\big).  
\end{equation}

It follows by ... that here any function $f$ that is a solution to the system \eqref{eq:syst-ineqs} of differential inequalities is the pointwise limit of a sequence of functions each of which is given by an expression of the form 
\begin{multline}
	c_0+c_1x+c_{2,4}e^x+c_{2,5}e^{2x} \\ 
	+c_{+,0,5}\big(4 ( 1 - e^{ t - x})_+ + (e^{2 (x - t)} - 1)_+ - 
   12 (e^{x - t} - 1)_+ + 6 (1 + x - t) (x - t)_+\big) \\ 
  +c_{-\infty,2,2}x^2+c_{-\infty,2,4}(e^x-1-x)+c_{-\infty,2,5}(e^{2 x}-1-2x) 
\end{multline}
for $x\in\R$, where $t\in\R$, $c_0$ and $c_1$ are in $\R$, and $c_{2,4},c_{2,5},c_{+,0,5},c_{-\infty,2,2},c_{-\infty,2,4},\break 
c_{-\infty,2,5}$ are in $[0,\infty)$.

%
 
Of course, using results on the dual cone in Section~\ref{dual G}, one can give a dual description of the cone of functions that are the solutions to system \eqref{eq:syst-ineqs} of differential inequalities.  

In view of \eqref{eq:f^j} and \eqref{eq:D^j,E^j}, the systems of differential inequalities discussed here are of the special, let us say compositional, form:  $E^i f\ge0$ for $i\in\intr k{n+1}$, where the linear differential operators $E^i$ are the cumulative compositions \break 
$E_i\cdots E_1E^0$ of linear differential operators of the first order. 
Compositional operators of the form $E^i=E^i_{w_0,\dots,w_i}$ are apparently not uncommon, especially for $i=2$. 
In particular, the operator $E^2_{w_0,w_1,w_2}$ with 
\begin{equation}\label{eq:w_0,w_1,w_2}
\text{$w_0=1$,\quad $w_1=\exp\int\frac{-2\mu}{\si^2}$,\quad $w_2=\frac2{\si^2w_1}$}	
\end{equation}
coincides with the linear operator $\Om$ given by the formula 
\begin{equation*}
	(\Om f)(x):=\mu(x)f'(x)+\frac{\si(x)^2}2\,f''(x), 
\end{equation*}
which is 
the infinitesimal generator of the semigroup corresponding to the diffusion described by the stochastic differential equation 
\begin{equation*}
\d X_t=\mu(X_t)\,\d t+\frac{\si(X_t)^2}2\,\d W_t, 	
\end{equation*}
where $W_\cdot$ is the standard Wiener process; cf.\ \cite[formula~(1.4)]{feller57}; the integral $\int\frac{-2\mu}{\si^2}$ in \eqref{eq:w_0,w_1,w_2} denotes an arbitrary anti-derivative of the function $\frac{-2\mu}{\si^2}$. 

The formal adjoint to $\Om$ is the Fokker--Planck operator, 
given by the formula 
\begin{equation*}
	(\Om^*f^*)(x):=-\big(\mu(x)f^*(x)\big)'+\frac{\big(\si(x)^2f^*(x)\big)''}2;  
\end{equation*}
the physical meaning of this expression is the rate of change in time at point $x$ of the concentration $f^*$ of the diffusing substance. See e.g.\ \cite{feller52,feller54,ventcel,feller57}.  
Observing that $(E^{2m}_{w_0,\dots,w_{2m}})^*=E^{2m}_{w_{2m},\dots,w_0}$ for nonnegative integers $m$ or by 
direct verification, we see that the Fokker--Planck operator is a compositional operator as well: 
\begin{equation*}
	\Om^*=E^2_{w_2,w_1,w_0}, 
\end{equation*}
with $w_0,w_1,w_2$ as in \eqref{eq:w_0,w_1,w_2}. 

In the special case with $\mu(x)\equiv-x$ and $\si(x)\equiv\sqrt2$, the diffusion operator $\Om$ is Stein's operator, which is the infinitesimal generator of the semigroup corresponding to the stochastic differential equation 
\begin{equation*}
\d X_t=-X_t\,\d t+\d W_t    	
\end{equation*} 
describing the standard Ornstein--Uhlenbeck process; 
see e.g.\ \cite{reinert05}. 
In this case, the gauge functions $w_0,w_1,w_2$ in \eqref{eq:w_0,w_1,w_2} can be chosen as follows: 
\begin{equation}\label{eq:stein w's}
	w_0=1,\quad w_1=1/\vpi,\quad w_2=\vpi, 
\end{equation}
where $\vpi$ is the standard normal probability density function.

\subsection{Refinements of the Chebyshev integral association inequality for generalized multiply monotone functions and related results}\label{cheb}
The classical Chebyshev integral association inequality states that 
\begin{equation}\label{eq:cheb}
	\int_0^1 f_1f_2\,\d\mu\ge\int_0^1 f_1\,\d\mu\int_0^1 f_2\,\d\mu,
\end{equation}
where $\mu$ is a Borel probability measure on $[0,1]$ and $f_1$ and $f_2$ are real-valued functions on $[0,1]$ that are both nondecreasing (or both nonincreasing); cf.\ e.g.\ \cite{graham83,tong97}. 

Andersson \cite{andersson58} obtained a counterpart of this inequality for convex functions, which may be stated as follows. If real-valued functions $f_1,\dots,f_N$ on $[0,1]$ are convex and non-negative and satisfy the condition $f_i(0)=0$ for all $i\in\intr1N$, then 
\begin{equation}\label{eq:anderss}
	\int_0^1f_1(x)\cdots f_N(x)\,\d x\ge\frac{2^N}{N+1}\,\int_0^1f_1(x)\,\d x\cdots\int_0^1f_N(x)\,\d x. 
\end{equation}
The equality in \eqref{eq:anderss} is attained when the functions $f_1,\dots,f_N$ are linear. 

Note that Andersson's conditions imply that the functions $f_1,\dots,f_N$ are also nondecreasing. Without the additional convexity condition, the best possible factor $\frac{2^N}{N+1}$ in \eqref{eq:anderss} would go down back to $1$.   

Andersson's result was extended by Karlin and Ziegler \cite{karlin-zieg75} to generalized multiply monotone functions in a variety of ways, but still on the interval $[0,1]$; results in \cite{karlin-zieg75} also refine ones by  Borell \cite{borell73} and Nehari \cite{nehari}. 

Using ..., one can obtain Chebyshev-type integral association inequalities for generalized multiply monotone functions on any interval $I\subseteq\R$. At that, as a further generalization of the mentioned results of \cite{andersson58,borell73,nehari,karlin-zieg75} and in keeping with \eqref{eq:cheb}, one may allow the integrals to be with respect to an arbitrary measure $\mu$ rather than with respect to the Lebesgue measure on $[0,1]$ (as e.g.\ in \eqref{eq:anderss}); this will be especially useful and natural when the interval $I$ is infinite. 


As an illustration, let $I=\R$, $k=n=1$, and $w_0(x)=\pi+\tan^{-1}x$ and $w_1(x)=1/(1+x^2)$ for $x\in\R$. For these gauge functions $w_0$ and $w_1$, the generalized derivatives $f^{(0)}$ and $f^{(1)}$ of a function $f\in\L^n=\L^1$ are as follows, in accordance with \eqref{eq:f^j}: 
\begin{equation}\label{eq:f^{(j)},cheb}
	f^{(0)}(x)=\frac{f(x)}{\pi+\tan^{-1}x}\quad\text{and}\quad 
	f^{(1)}(x)=\frac{f'(x)\left(1+x^2\right) \left(\pi+\tan^{-1}x\right) -f(x)}{\left(\pi+\tan^{-1}x \right)^2} 
\end{equation}
for real $x$, where $f'$ is the usual derivative of $f$. The class $\F_+^{k:n}=\F_+^{1:1}$ consists of all functions $f\in\L^1$ such that $f^{(0)}$ and $f^{(1)}$ are nondecreasing. 


\begin{theorem}\label{th:4/3}
Let the Borel probability measure $\mu$ on $\R$ be defined by the formula $\mu(\d x)=\frac{\d x}{\pi(1+x^2)}$, that is, $\mu(B)=\int_B\frac{\d x}{\pi(1+x^2)}$ for all Borel sets $B\subseteq\R$. 
Let $w_0$ and $w_1$ be the same gauge functions as in the preceding paragraph. 
Then, for any functions $f_1$ and $f_2$ in $\F_+^{1:1}$ such that $f_1(-\infty+)=f_2(-\infty+)=0$, 
\begin{equation}\label{eq:4/3}
	\int_\R f_1f_2\,\d\mu\ge\frac{384}{245}\,
	\int_\R f_1\,\d\mu\int_\R f_2\,\d\mu,
\end{equation}
and  
the constant factor $\frac{384}{245}=1.5673\dots$ is the best possible here. 
\end{theorem}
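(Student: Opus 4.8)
The plan is to linearize the whole problem by the nonlinear change of scale $\psi=\tan$ and then reduce the bilinear inequality to a single pointwise estimate on the extreme rays of the cone. First I would apply Proposition~\ref{prop:invar} with $\tI=(-\pi/2,\pi/2)$ and $\psi(\theta)=\tan\theta$, so that $\psi'=1+\tan^2\theta$ and, by \eqref{eq:tw}, $\tw_0=w_0\circ\psi=\pi+\theta$ while $\tw_1=(w_1\circ\psi)\psi'=1$. Thus $f\in\F_+^{1:1}(\R)$ corresponds bijectively to $\tilde f=f\circ\psi\in\F_+^{1:1}(\tI)$ for the gauges $\tww=(\pi+\theta,1,1,\dots)$, with $(f\circ\psi)^{(j)}_\tww=f^{(j)}_\w\circ\psi$. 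Writing $u:=\pi+\theta\in(\pi/2,3\pi/2)$ and $H(u):=\tilde f(\theta)/(\pi+\theta)=\tilde f^{(0)}_\tww$, the two monotonicity conditions defining $\F_+^{1:1}$ become exactly ``$H$ nondecreasing'' and ``$H$ convex'', the hypothesis $f(-\infty+)=0$ becomes $H(\pi/2+)=0$, and $f=uH\ge0$ since $f^{(0)}=H\ge0$. Under $x=\tan\theta$ the Cauchy law $\mu$ pushes forward to $\tfrac1\pi\,du$ on $(\pi/2,3\pi/2)$, giving $\int_\R f_1f_2\,d\mu=\tfrac1\pi\int_{\pi/2}^{3\pi/2}u^2H_1H_2\,du$ and $\int_\R f_i\,d\mu=\tfrac1\pi\int_{\pi/2}^{3\pi/2}uH_i\,du$.

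Next I would use the extreme-ray representation. Writing $\alpha=\pi/2$, $\beta=3\pi/2$, every nondecreasing convex $H\ge0$ on $[\alpha,\beta]$ with $H(\alpha+)=0$ has the exact decomposition $H=\int_{[\alpha,\beta)}(u-s)_+\,d\rho(s)$ with $\rho=H'(\alpha+)\,\delta_\alpha+H''|_{(\alpha,\beta)}$ a nonnegative measure; this is the $k=n=1$, left-normalized specialization of the decomposition $\PP_+^{1:1}+\H_+^{0:1}$ behind \eqref{eq:g_y in} (the polynomial part collapsing, under $H(\alpha+)=0$, to a multiple of the linear ray $(u-\alpha)_+$). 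Setting $N(s):=\int_\alpha^\beta u(u-s)_+\,du$ and $D(s,t):=\int_\alpha^\beta u^2(u-s)_+(u-t)_+\,du$, Tonelli gives $\int uH_i=\int N\,d\rho_i$ and $\int u^2H_1H_2=\iint D\,d\rho_1\,d\rho_2$ (all integrands nonnegative, so the interchange and the ensuing inequalities are valid in $[0,\infty]$ with no finiteness assumption). Hence \eqref{eq:4/3} will follow once I establish the pointwise bound
\[ \pi\,D(s,t)\ \ge\ \tfrac{384}{245}\,N(s)\,N(t)\qquad(s,t\in[\alpha,\beta)), \]
for then $\iint D\,d\rho_1\,d\rho_2\ge\tfrac{384}{245\pi}\int N\,d\rho_1\int N\,d\rho_2$, which is precisely the asserted inequality. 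Equivalently, with $\Phi(s,t):=\pi D(s,t)/(N(s)N(t))$, I must show $\inf\Phi=\tfrac{384}{245}$.

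For the computation, for $s\le t$ one has $D(s,t)=\int_t^\beta u^2(u-s)(u-t)\,du$ and $N(s)=\tfrac{\beta^3}3-\tfrac{s\beta^2}2+\tfrac{s^3}6$, elementary polynomials once $\beta=3\alpha$ is inserted. Evaluating at the corner $s=t=\alpha$ yields $N(\alpha)=\tfrac{14}3\alpha^3$ and $D(\alpha,\alpha)=\tfrac{256}{15}\alpha^5$, so $\Phi(\alpha,\alpha)=\pi\cdot\tfrac{256}{15}\alpha^5/(\tfrac{14}3\alpha^3)^2=\tfrac{384}{245}$ after substituting $\alpha=\pi/2$. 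This simultaneously proves optimality of the constant: taking $H_1=H_2=(u-\alpha)_+=u-\alpha$, i.e. $f_1=f_2=f_*$ with $f_*(x)=(\pi+\tan^{-1}x)(\tfrac\pi2+\tan^{-1}x)$, turns \eqref{eq:4/3} into an equality.

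The hard part is the remaining claim that the corner is the global minimum, i.e. $\Phi\ge\Phi(\alpha,\alpha)$ on $[\alpha,\beta)^2$. By symmetry I would work on $\{\alpha\le s\le t<\beta\}$ and show $\Phi$ is nondecreasing in each variable there. For the $s$-direction the sign condition $(\partial_sD)\,N(s)-D\,N'(s)\ge0$ simplifies dramatically because the combination $N(s)+\tfrac{s}{2}(\beta^2-s^2)$ telescopes to $\tfrac13(\beta^3-s^3)$, collapsing it to the one-variable inequality
\[ \frac{\int_s^\beta u^3(u-s)\,du}{\int_s^\beta u^2(u-s)\,du}\ \ge\ \frac23\cdot\frac{\beta^2+\beta s+s^2}{\beta+s}, \]
whose left side is a weighted mean of $u$ over $[s,\beta]$ increasing in the frozen variable $t$, so $t=s$ is the worst case. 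The $t$-direction is easier still: the relevant expression is affine in $s$, so only the two edges $s=\alpha$ and $s=t$ need checking. After rescaling $s=\alpha\sigma$ (with $\beta=3\alpha$), both reductions leave nothing but explicit polynomial inequalities in one variable on $\sigma\in[1,3]$. Tracking these signs in the two-variable rational function $\Phi$ and confirming there is no interior dip is the one genuinely laborious step; the rest is the structural reduction above.
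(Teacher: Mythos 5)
Your proposal is correct in substance, but it reaches \eqref{eq:4/3} by a genuinely different route from the paper. You first linearize via Proposition~\ref{prop:invar} with $\psi=\tan$, which turns the problem into one about the classical cone of nondecreasing convex functions $H$ on the \emph{bounded} interval $(\pi/2,3\pi/2)$ with $H(\pi/2+)=0$; there the exact Choquet-type representation $H=\int_{[\alpha,\beta)}(u-s)_+\,\d\rho(s)$ is available (your derivation of it is sound: $H'_+(\alpha+)$ is finite because $H'_+$ is nonnegative and nondecreasing, and the finiteness of the left endpoint is precisely what sidesteps the obstruction of Remark~\ref{rem:left}). The paper instead stays on $\R$ and runs its general machinery: it approximates each $f_\al$ by the truncated expansions $g_{\al,y}$, shows via \eqref{eq:betw} and monotone/dominated convergence that the integrals converge, and then massages the constant term so that the limit object $\hat g_{\al,y}$ is an exact nonnegative combination of $\rho$ and the $\tau_t$'s. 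Your tangent substitution carries these extreme rays to exactly $u(u-s)_+$, $s\in[\alpha,\beta)$ (with $s=\alpha$ giving $\rho$), so both arguments terminate in the same two-parameter minimization; your corner computation $\Phi(\alpha,\alpha)=\pi\cdot\frac{256}{15}\alpha^5/(\frac{14}{3}\alpha^3)^2=\frac{384}{245}$ and the identification of the extremizer $f_*=\rho$ are correct. What your version buys is (i) no limiting argument at all in the structural reduction, and (ii) an explicit confrontation of the off-diagonal pairs $\Phi(s,t)$ with $\alpha<s<t$, which the paper's displayed ``easy calculations'' cover only for $s=\alpha$ or $s=t$. The one incomplete step is the same in both treatments: the final verification that the corner is the global minimum of $\Phi$ over $[\alpha,\beta)^2$. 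Your monotonicity plan (and the identity $N(s)+\tfrac s2(\beta^2-s^2)=\tfrac13(\beta^3-s^3)$ you use to collapse the $s$-derivative condition) is plausible and consistent with numerical checks, but you leave the resulting one-variable polynomial inequalities unverified, which is at roughly the paper's own level of detail rather than below it.
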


\begin{proof}[Proof of Theorem~\ref{th:4/3}]
Take indeed any functions $f_1$ and $f_2$ in $\F_+^{1:1}$ such that $f_1(-\infty+)=f_2(-\infty+)=0$. 
In particular, this implies that both $f_1$ and $f_2$ are nonnegative, since $f^{(0)}(x)=\frac{f(x)}{\pi+\tan^{-1}x}$ increases in $x\in\R$ for any $f\in\F_+^{1:1}$. 

Take any $\al\in\{1,2\}$ and any real $z$ and, in accordance with \eqref{eq:y}, let $y$ be an arbitrary real number in the interval $(-\infty,z]$. 
Let then $g_{\al,y}$ be the function in $\F_+^{1:1}$ constructed based on $f_\al$ the way the function $g_y$ in \eqref{eq:g_y} was constructed based on the function $f$. 
Note that any $\w$-polynomial of degree $\le0$ (which is of form $cw_0$ for a real constant $c$) is bounded in this context, because the function $w_0$ is bounded. Also, any function $f\in\F_+^{1:1}$ such that $f(-\infty+)=0$ is bounded on the interval $(-\infty,z)$, because the condition $f\in\F_+^{1:1}$ with $I=\R$ implies that $f$ is continuous on $\R$. Since 
$\mu$ is a probability measure, it follows by item \ref{bounds on g_y}(**) on page~\pageref{bounds on g_y} (with $k=1$) 
and dominated convergence that $\int_{(-\infty,z)} g_{\al,y}\,\d\mu\underset{y\downarrow-\infty}\longrightarrow\int_{(-\infty,z)} f_\al\,\d\mu\in[0,\infty)$. 
It also follows by 
item \ref{bounds on g_y}(*) on page~\pageref{bounds on g_y} and monotone convergence that 
$\int_{[z,\infty)} g_{\al,y}\,\d\mu\underset{y\downarrow-\infty}\longrightarrow\int_{[z,\infty)} f_\al\,\d\mu\in[0,\infty]$. So, 
\begin{equation}\label{eq:int conv}
\int_\R g_{\al,y}\,\d\mu\underset{y\downarrow-\infty}\longrightarrow\int_\R f_\al\,\d\mu\in[0,\infty]. 	
\end{equation}



By \eqref{eq:p_tjm}, \eqref{eq:F_kn}, and \eqref{eq:p_{a,z;k:k:j}}--\eqref{eq:p_{a,z;i:k:j}}, here 
\begin{equation}\label{eq:p's,cheb}
\left.
	\begin{aligned}
	p_{0;0,0}(x)&=w_0(x)=\pi+\tan^{-1}x, \\ 
	p_{t;0,1}(x)&=w_0(x)\int_t^x w_1(x_1)\,\d x_1=(\pi+\tan^{-1}x)(\tan^{-1}x-\tan^{-1}t), \\ 
	F_{1,1}&=\{1\}, \\ 
	p_{-\infty,0;0:1:1}(x)&=w_0(x)\int_0^x p_{-\infty;1,1}(x_1)\,\d x_1=(\pi+\tan^{-1}x)\tan^{-1}x
\end{aligned}
\right\}
\end{equation}
for all $t\in[-\infty,\infty)$ and $x\in\R$. So, by \eqref{eq:g_y=} and \eqref{eq:h_i,y}, for all real $x$ 
\begin{align*}
	g_{\al,y}(x)&=c_{0;\al,y}\, 
	w_0(x)+c_{1;\al}\,p_{-\infty,0;0:1:1}(x)
	+\int_{[y,\infty)}p_{t;i,n}^+(x)\,\nu_{\al}(\d t) \\ 
	&=c_{0;\al,y}\,(\pi+\tan^{-1}x)+c_{1;\al}\,(\pi+\tan^{-1}x)\tan^{-1}x \\
	&\ \ +(\pi+\tan^{-1}x)\int_{[y,\infty)}(\tan^{-1}x-\tan^{-1}t)_+\,\nu_{\al}(\d t), 
\end{align*}
where $c_{0;\al,y}\in\R$ depends only on $f_\al$ and $y$; $c_{1;\al}\in[0,\infty)$ depends only on $f_\al$; and the Borel measure $\nu_{\al}$ depends only on $f_\al$. 
The latter, displayed integral is finite for each $x\in\R$ \big(since $g_{\al,y}(x)$ is so\big) and for each $t\in[y,\infty)$ the integrand there monotonically decreases to $0$ as $x$ decreases to $-\infty$. So, by dominated convergence, this integral goes to $0$ as $x\to-\infty$. 
Taking also \eqref{eq:betw} (with $k=1$) into account, we have  
\begin{equation}
0=f_\al(-\infty+)\le g_{\al,y}(-\infty+)=c_{0;\al,y}\,\tfrac\pi2-c_{1;\al}\,\tfrac{\pi^2}4.  	
\end{equation} 

Recall definition \eqref{eq:f^j} of $f^{(j)}$ and, along with the functions $f_\al$ and $g_{\al,y}$, consider $f_\al^{(0)}=f_\al/w_0$ and $g_{\al,y}^{(0)}=g_{\al,y}/w_0$. 
Clearly, $f_\al^{(0)}(-\infty+)=\frac2\pi\,f_\al(-\infty+)=0$ and 
\begin{equation}\label{eq:c-c>0}
	g_{\al,y}^{(0)}(-\infty+)=\tfrac2\pi\,g_{\al,y}(-\infty+)=c_{0;\al,y}-c_{1;\al}\,\tfrac\pi2\ge0. 
\end{equation}
Take any real $\vp>0$. 
%
Since 
$f_\al^{(0)}(-\infty+)=0$, one has $f_\al^{(0)}(v_\vp)<\vp/2$ for some real $v_\vp$. 
By \eqref{eq:betw}, $g_{\al,y}^{(0)}(x)\sear{y\downarrow-\infty}f_\al^{(0)}(x)$ for all $x\in(-\infty,z]$. 
So, there is $y_\vp\in\R$ such that for all $y\in(-\infty,y_\vp]$ one has $g_{\al,y}^{(0)}(v_\vp)<f_\al^{(0)}(v_\vp)+\vp/2$. 
By \eqref{eq:g_y in}, $g_{\al,y}\in\F_+^{1:1}$, and so, the function $g_{\al,y}^{(0)}$ is nondecreasing. 
Hence, $g_{\al,y}^{(0)}(v)\le g_{\al,y}^{(0)}(v_\vp)<f_\al^{(0)}(v_\vp)+\vp/2<\vp$ for all $y\in(-\infty,y_\vp]$ and $v\in(-\infty,v_\vp]$. 
It follows that $g_{\al,y}^{(0)}(-\infty+)<\vp$ for all $y\in(-\infty,y_\vp]$. On the other hand, by \eqref{eq:c-c>0}, $g_{\al,y}^{(0)}(-\infty+)\ge0$. 
This shows that $g_{\al,y}^{(0)}(-\infty+)=c_{0;\al,y}-c_{1;\al}\,\frac\pi2\to0$ 
and hence $c_{0;\al,y}(\pi+\tan^{-1}x)\to c_{1;\al}\,\frac\pi2\,(\pi+\tan^{-1}x)$ uniformly in $x\in\R$ 
as $y\to-\infty$. So, letting 
\begin{align*}
	\hat g_{\al,y}(x)&:=g_{\al,y}(x)-c_{0;\al,y}(\pi+\tan^{-1}x) + c_{1;\al}\,\tfrac\pi2\,(\pi+\tan^{-1}x) \\ 
	&=c_{1;\al}(\pi+\tan^{-1}x)(\tfrac\pi2+\tan^{-1}x)
	+\int_{[y,\infty)}(\tan^{-1}x-\tan^{-1}t)_+\,\nu_{\al}(\d t)
\end{align*}
for all $x\in\R$, one has $\hat g_{\al,y}(-\infty+)=0$, $\hat g_{\al,y}\in\F_+^{1:1}$, and  
$\int_\R \hat g_{\al,y}\,\d\mu\underset{y\downarrow-\infty}\longrightarrow\int_\R f_\al\,\d\mu\in[0,\infty]$ by \eqref{eq:int conv}. 

Thus, it suffices to prove \eqref{eq:4/3} with $\hat g_{\al,y}$ in place of $f_\al$. 
Moreover, since each side of inequality \eqref{eq:4/3} is bilinear in $(f_1,f_2)$, 
it remains to check that 
\begin{equation}\label{eq:=4/3}
\min\big\{r(f_1,f_2)\colon f_1,f_2\text{ in the set }\{\rho\}\cup\{\tau_t\colon t\in\R\}\big\}=\frac{384}{245}, 	
\end{equation}
where 
\begin{equation}
	r(f_1,f_2):=\frac{\int_\R f_1f_2\,\d\mu}
	{\int_\R f_1\,\d\mu\,\int_\R f_2\,\d\mu}
\end{equation}
and 
\begin{equation}
	\rho(x):=(\pi+\tan^{-1}x)(\tfrac\pi2+\tan^{-1}x)\quad\text{and}\quad\tau_t(x):=(\pi+\tan^{-1}x)(\tan^{-1}x-\tan^{-1}t)_+
\end{equation}
for all real $x$ and $t$. 
Easy calculations show that $r(\rho,\rho)=\frac{384}{245}=1.5673\dots$; $r(\rho,\tau_t)$ is a rational function of $\tan^{-1}t$, which increases from $\frac{384}{245}$ to $\frac{18}7$ as $t$ increases from $-\infty$ to $\infty$; and 
$r(\tau_t,\tau_t)$ is a rational function of $\tan^{-1}t$, which increases from $\frac{384}{245}$ to $\infty$ as $t$ increases from $-\infty$ to $\infty$. 
This confirms \eqref{eq:=4/3} and thus completes the proof of Theorem~\ref{th:4/3}. 
\end{proof}

It should be clear now that one can  produce any number of results similar to Theorem~\ref{th:4/3}. 

Similar generalizations/refinements may be obtained for other inequalities between bilinear or, more generally, multilinear functionals 
on the Cartesian products of cones of generalized multiply monotone functions. One such potential extension concerns the Stolarsky inequality; see e.g.\ \cite{MR1931228} and further references therein.





\subsection{Generalized moment comparison inequalities for sums of independent random variables and (super)martingales}\label{compar}

As a quick illustration of how results on the dual cone in Section~\ref{dual G} can be used, consider the setting posed in Subsection~\ref{cheb}, with $I=\R$, $k=n=1$, and $w_0(x)=\pi+\tan^{-1}x$ and $w_1(x)=1/(1+x^2)$ for $x\in\R$, so that the class $\F_+^{k:n}=\F_+^{1:1}$ consists of all functions $f\in\L^1$ such that the functions $f^{(0)}$ and $f^{(1)}$ described in \eqref{eq:f^{(j)},cheb} are nondecreasing. Then, using Theorem~\ref{th:dual cone, b} and recalling also \eqref{eq:N_+^k}, \eqref{eq:G}, \eqref{eq:hG} \eqref{eq:p's,cheb}, one immediately obtains

\begin{proposition}\label{prop:comp}
For any real-valued r.v.'s $X$ and $Y$, the following conditions are equivalent to each other: 
\begin{enumerate}
	\item[(i)] Inequality  
\begin{equation}\label{eq:comp}
\E f(X)\le\E f(Y)	
\end{equation}
holds for all functions $f\in\L^1$ such that 
\begin{equation}
	\frac{f(x)}{\pi+\tan^{-1}x}\quad\text{and}\quad 
	\frac{f'(x)\left(1+x^2\right) \left(\pi+\tan^{-1}x\right) -f(x)}{\left(\pi+\tan^{-1}x \right)^2} 
\end{equation}
are nondecreasing in $x\in\R$. 
	\item[(ii)] Inequality \eqref{eq:comp} holds for all functions $f\in\{p_{0;0,0},p_{-\infty,0;0:1:1}\}\cup\{p_{t;0,1}^+\colon t\in\R\}$ given by formulas \eqref{eq:p's,cheb}. 
\end{enumerate}
\end{proposition}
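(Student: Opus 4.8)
The plan is to read off Proposition~\ref{prop:comp} from Theorem~\ref{th:dual cone, b} specialized to $I=\R$, $k=n=1$, $w_0(x)=\pi+\tan^{-1}x$, $w_1(x)=1/(1+x^2)$. First I would fix the dictionary between the two formulations. Let $\nu_1$ and $\nu_2$ be the (Borel probability) distributions of $Y$ and $X$ on $\R$, so that \eqref{eq:comp} reads $\nu_1(f)\ge\nu_2(f)$. By \eqref{eq:f^{(j)},cheb} and the definition \eqref{eq:F}, the functions $f$ figuring in condition~(i) are exactly the members of $\F_+^{1:1}$; hence, taking $\G=\F_+^{1:1}$, condition~(i) says precisely that $\nu_1(f)\ge\nu_2(f)$ for all $f\in\G$, i.e.\ (once admissibility is checked) that $(\nu_1,\nu_2)\in\hG$ in the sense of \eqref{eq:hG}. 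It then remains to translate the three conditions of Theorem~\ref{th:dual cone, b} through \eqref{eq:p's,cheb}.

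The one hypothesis of Theorem~\ref{th:dual cone, b} needing verification is admissibility, $(\nu_1,\nu_2)\in\NN_+(\G)\times\NN_+(\G)$, and here the decisive feature is that the gauge $w_0$ is bounded, taking values in $(\pi/2,3\pi/2)$. Every $p\in\PP_+^{\le1}$ is of the form $p(x)=(\pi+\tan^{-1}x)(c_0+c_1\tan^{-1}x)$ with $c_1\ge0$, hence bounded, so $\nu(p)$ is finite for every probability measure $\nu$. Since $k=1\le n=1$, part~(i) of Proposition~\ref{prop:N_+^k} then gives $\nu\in\NN_+(\G)$ for every probability measure $\nu$; equivalently, by part~(II)(i) of Proposition~\ref{prop:bounds} ($k$ being odd), each $f\in\F_+^{1:1}$ dominates a bounded $\w$-polynomial on all of $\R$. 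I would also record that the exceptional case \eqref{eq:except} does not arise, since $k=1\ne n+1$.

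With admissibility in hand, Theorem~\ref{th:dual cone, b} applies. Taking $s=z=0$ and recalling $F_{1,1}=\{1\}$ from \eqref{eq:p's,cheb}, its conditions (i')--(iii) become: $\nu_1(p_{0;0,0})=\nu_2(p_{0;0,0})$; $\nu_1(p_{-\infty,0;0:1:1})\ge\nu_2(p_{-\infty,0;0:1:1})$; and $\nu_1(p_{t;0,1}^+)\ge\nu_2(p_{t;0,1}^+)$ for all $t\in\R$. The implication (i)$\Rightarrow$(ii) is immediate: the listed test functions all lie in $\F_+^{1:1}$ (by \eqref{eq:p_a in} for $p_{-\infty,0;0:1:1}$ and Proposition~\ref{prop:p^+} for the positive parts $p_{t;0,1}^+$), so \eqref{eq:comp} for them is a special case of condition~(i). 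For the converse (ii)$\Rightarrow$(i), conditions (ii') and (iii) are precisely the inequalities of condition~(ii) for $p_{-\infty,0;0:1:1}$ and the $p_{t;0,1}^+$, while condition (i') is the instance of condition~(ii) at the degree-zero polynomial $p_{0;0,0}=w_0$; the ``if'' part of Theorem~\ref{th:dual cone, b} then yields $(\nu_1,\nu_2)\in\hG$, i.e.\ condition~(i).

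The argument is short, so the two points to get right are the admissibility step---which genuinely relies on the boundedness of $w_0$ and would fail for an unbounded left gauge---and the bookkeeping matching the equality/inequality split of conditions (i')--(ii') to the single test family of condition~(ii). In particular, $p_{0;0,0}$ enters through the equality (i'), so \eqref{eq:comp} against $p_{0;0,0}=w_0$ must be read in both directions (legitimate since $\pm w_0\in\PP^{\le0}\subseteq\F_+^{1:1}$, cf.\ Remark~\ref{rem:ii->i}); this is the only genuinely non-mechanical part of the translation.
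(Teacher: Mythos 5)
Your route is the paper's own: the paper proves Proposition~\ref{prop:comp} in one sentence, as an immediate specialization of Theorem~\ref{th:dual cone, b} to $I=\R$, $k=n=1$ and these gauges, and your elaboration of that sentence is faithful to it. In particular, your admissibility check is correct and is exactly what the paper leaves implicit: every $p\in\PP_+^{\le1}$ here has the form $p(x)=(\pi+\tan^{-1}x)(c_0+c_1\tan^{-1}x)$ and is bounded, so by part~(i) of Proposition~\ref{prop:N_+^k} every probability law on $\R$ lies in $\NN_+(\G)$; and the exceptional case \eqref{eq:except} indeed cannot occur since $k=1\ne n+1$.

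The gap is at the point you yourself flag and then wave through. In the direction (ii)$\Rightarrow$(i) you must feed Theorem~\ref{th:dual cone, b} its hypothesis (i$'$), which is the \emph{equality} $\nu_1(p_{0;0,0})=\nu_2(p_{0;0,0})$; condition~(ii) of the proposition, read literally, supplies only the one-sided inequality $\E p_{0;0,0}(X)\le\E p_{0;0,0}(Y)$. Your parenthetical ``legitimate since $\pm w_0\in\PP^{\le0}\subseteq\F_+^{1:1}$'' only explains why condition~(i) forces the equality, i.e.\ it serves the necessity direction; declaring that \eqref{eq:comp} against $p_{0;0,0}$ ``must be read in both directions'' is a strengthening of hypothesis~(ii), not a deduction from it. And the implication genuinely fails without that strengthening: take $X\equiv0$ and $Y\equiv1$. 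Then $\E p_{0;0,0}(X)=\pi\le\tfrac{5\pi}4=\E p_{0;0,0}(Y)$, $\E p_{-\infty,0;0:1:1}(X)=0\le\tfrac{5\pi}4\cdot\tfrac\pi4=\E p_{-\infty,0;0:1:1}(Y)$, and $\E p_{t;0,1}^+(X)=\pi\,(-\tan^{-1}t)_+\le\tfrac{5\pi}4\,(\tfrac\pi4-\tan^{-1}t)_+=\E p_{t;0,1}^+(Y)$ for every $t\in\R$, so condition~(ii) holds; yet $f:=-w_0$ satisfies $f^{(0)}\equiv-1$ and $f^{(1)}\equiv0$ by \eqref{eq:f^{(j)},cheb}, hence $f\in\F_+^{1:1}$, while $\E f(X)=-\pi>-\tfrac{5\pi}4=\E f(Y)$, so condition~(i) fails. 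The repair is to add $-p_{0;0,0}$ to the test set in condition~(ii) (equivalently, to demand $\E p_{0;0,0}(X)=\E p_{0;0,0}(Y)$ there, matching condition~(i$'$) of Theorem~\ref{th:dual cone, b}); with that emendation your argument closes cleanly. The paper's ``one immediately obtains'' elides exactly the same point, so this is as much a defect of the statement as printed as of your proof of it.
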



Apparently more interesting and nontrivial existing and potential applications of duality results in Section~\ref{dual G} may be represented by the following theorem concerning 
normal domination of (super)martingales with conditionally bounded differences, which may be further applied to concentration of measure for separately Lipschitz functions, as shown in \cite[Section~4]{normal}. 
Let $(S_0,S_1,\dots)$ be a supermartingale relative to a filter $(H_{\le0},H_{\le1},\dots)$ of $\sigma$-algebras, with $S_0\le0$ almost surely (a.s.) and differences $X_i:=S_i-S_{i-1}$ for $i\in\intr1\infty$. 
Let $\E_j$ and $\Var_j$ denote the conditional expectation and variance, respectively, given $H_{\le j}$.

\begin{theorem}\label{th:bernoulli} 
Suppose that for every $i\in\intr1\infty$ there exist $H_{\le(i-1)}$-measur\-able r.v.'s $C_{i-1}$ and $D_{i-1}$ and a positive real number $s_i$ such that 
\begin{gather}
C_{i-1}\le X_i\le D_{i-1} \quad\text{ and } \label{eq:bern-cond1} \\
D_{i-1}-C_{i-1}\le 2 s_i \label{eq:bern-cond2}
\end{gather}
a.s. Then for all $f\in\F_+^{1:5}$ (in the unit-gauges case) and all $n\in\intr1\infty$
\begin{equation}\label{eq:bernoulli}
\E f(S_n)\le\E f(sZ),
\end{equation}
where 
$$s:=\sqrt{s_1^2+\dots+s_n^2}$$
and $Z\sim N(0,1)$. 

If, moreover, $(S_0,S_1,\dots)$ is a martingale relative to $(H_{\le0},H_{\le1},\dots)$ with $S_0=0$ a.s., then inequality \eqref{eq:bernoulli} holds for all $f\in\F_+^{2:5}$. 
\end{theorem}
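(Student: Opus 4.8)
The plan is to deduce both assertions from the dual-cone description developed above, thereby reducing all of the probabilistic content to a single Gaussian moment comparison. Since $I=\R$ and we are in the unit-gauges case with $n=5$, I would apply Corollary~\ref{cor:dual,probab} with $(X,Y)=(sZ,S_n)$, taking $\G=\F_+^{1:5}$ (so $k=1$) in the supermartingale case and $\G=\F_+^{2:5}$ (so $k=2$) in the martingale case. Neither falls under the exceptional case \eqref{eq:except}, since there $k=n+1=6$ while here $k\le2$; hence Corollary~\ref{cor:dual,probab} applies directly and reduces the desired inequality $\E f(S_n)\le\E f(sZ)$, valid for all $f\in\G$, to its conditions~(i)--(iii), provided the laws of $sZ$ and $S_n$ lie in the admissible set $\NN_+(\G)$.

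First I would settle admissibility and the polynomial conditions~(i)--(ii), all of which are elementary. The bounds \eqref{eq:bern-cond1}--\eqref{eq:bern-cond2} combined with $\E_{i-1}X_i\le0$ force $C_{i-1}\le\E_{i-1}X_i\le0$, whence $X_i\le D_{i-1}\le C_{i-1}+2s_i\le2s_i$ a.s.; thus $S_n\le2(s_1+\dots+s_n)$ is bounded above, so each $(S_n-t)_+^5$ is bounded and $S_n$ is integrable (as it must be for a supermartingale). Moreover $\Var_{i-1}(X_i)\le\big((D_{i-1}-C_{i-1})/2\big)^2\le s_i^2$, so in the martingale case $\Var(S_n)=\sum_i\E\,\Var_{i-1}(X_i)\le s^2$ by orthogonality of increments, giving $\E S_n^2<\infty$. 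With the finiteness of all Gaussian moments, these facts yield admissibility of the laws of $sZ$ and $S_n$ via Proposition~\ref{prop:N_+^k}(i). The polynomial conditions are then immediate: in the supermartingale case~(i) is vacuous and~(ii) is $\E S_n\le0$, which follows from $S_0\le0$ and $\E S_n\le\E S_0$; in the martingale case~(i) is $\E S_n=0$ (from $S_0=0$) and~(ii) is $s^2\ge\Var(S_n)$, already obtained above.

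The remaining condition~(iii), namely $\E(S_n-t)_+^5\le\E(sZ-t)_+^5$ for every $t\in\R$, is the analytic core. Because every $h\in\H_+^{0:5}$ is, by definition, a nonnegative mixture of the generators $x\mapsto(x-t)_+^5/5!$, this single family of inequalities is equivalent to the assertion that $\E h(S_n)\le\E h(sZ)$ for all $h\in\H_+^{0:5}$, i.e.\ to normal domination over the cone $\H_+^{0:5}$. I would establish it by the replacement scheme of \cite{normal}: writing $sZ$ as a sum of $n$ independent centered Gaussian increments and passing from $S_n$ to that sum one increment at a time, controlling each step conditionally on the past $\si$-algebra $H_{\le(i-1)}$. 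Since $x\mapsto(x-t)_+^5$ lies in a high-order monotone cone, the conditional one-step comparison can be reduced, by an extremal-point argument, to the worst admissible one-step law -- a two-point distribution on $\{C_{i-1},D_{i-1}\}$ subject to $\E_{i-1}X_i\le0$ and $D_{i-1}-C_{i-1}\le2s_i$ -- leaving an explicit scalar inequality against the Gaussian step.

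The hard part is precisely condition~(iii): for supermartingales the fifth-power positive parts do not telescope under a naive term-by-term (Lindeberg) swap, because the increments are only conditionally bounded and may have heavy lower tails, so the replacement must be driven by the extremal/worst-case reduction, and the resulting one-step Bennett--Hoeffding-type inequality for the fifth positive moment is genuinely delicate; this is exactly the content proved in \cite{normal} for the cone $\H_+^{0:5}$. The role of the new dual-cone theorem (Theorem~\ref{th:dual cone, b}, via Corollary~\ref{cor:dual,probab}) is then to upgrade that single inequality, together with the elementary conditions~(i)--(ii), to the full domination over the far larger cones $\F_+^{1:5}$ and $\F_+^{2:5}$.
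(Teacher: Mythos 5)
Your proposal is correct and follows essentially the same route as the paper: verify the moment conditions of Corollary~\ref{cor:dual,probab} ((i)--(ii) elementarily from the supermartingale/martingale and boundedness hypotheses, and (iii) by invoking Theorem~2.1 of \cite{normal} for $\E(S_n-t)_+^5\le\E(sZ-t)_+^5$) and then apply the dual-cone corollary with $I=\R$, $n=5$, and $k\in\{1,2\}$. Your additional explicit check of admissibility of the laws of $S_n$ and $sZ$ via Proposition~\ref{prop:N_+^k}(i) is a point the paper leaves implicit, and is a welcome clarification rather than a deviation.
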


\begin{proof}[Proof of Theorem~\ref{th:bernoulli}]
By \cite[Theorem~2.1]{normal}, $\E(S_n-t)_+^5\le\E(sZ-t)_+^5$ for all $t\in\R$. 
Also, the conditions that $S_0\le0$ a.s. and $(S_0,S_1,\dots)$ be a supermartingale, yield $\E S_n\le0=\E sZ$. So, by Corollary~\ref{cor:dual,probab} with $I=\R$, 
inequality \eqref{eq:bernoulli} holds for all $f\in\F_+^{1:5}$. 

Assuming now that $(S_0,S_1,\dots)$ is a martingale with $S_0=0$, one has $\E S_n=0=\E sZ$. Also, it then follows that $\E S_n^2=\sum_1^n\E X_i^2=\sum_1^n\E\Var_{i-1} X_i
\le\sum_1^n\E|C_{i-1}D_{i-1}|\le\frac14\,\sum_1^n\E(|C_{i-1}|+|D_{i-1}|)^2
=\frac14\,\sum_1^n\E(D_{i-1}-C_{i-1})^2\le\sum_1^n s_i^2=s^2=\E(sZ)^2$;  
the first inequality in the above chain of equalities and inequalities follows by \cite[(2.12)]{normal} and the third equality in this chain follows because, by \eqref{eq:bern-cond1}, $C_{i-1}\le\E_{i-1}X_i=0\le D_{i-1}$ a.s. 
So, under the additional conditions stated in the last sentence of Theorem~\ref{th:bernoulli}, \eqref{eq:bernoulli} holds indeed for all $f\in\F_+^{2:5}$. 
\end{proof}

Recalling 
the definition of the set $\H_+^{i:n}$ in the beginning of Subsection~\ref{H_+} and Proposition~\ref{prop:H in F}, one sees that   
Theorem~\ref{th:bernoulli} provides an extension of inequality \eqref{eq:bernoulli}, from all $f\in\H_+^{0:5}$ to all $f\in\F_+^{1:5}$ \big(or, under the additional, martingale conditions on $(S_0,S_1,\dots)$, even to all $f\in\F_+^{2:5}$\big). 
Quite similarly one can extend other results in \cite{normal}, including (i) Theorem~2.6 as far as it concerns (2.3); (ii) inequality (4.3); (iii) Theorem~4.4 and Corollary~4.8 as far as they concern (4.3) -- all the references in this sentence are to \cite{normal}. 

\begin{center}
	***
\end{center}


Among other applications is the main result in \cite{left-arxiv}, whose proof is based in part on Corollary~\ref{cor:dual,probab,refl} in the present paper. In particular, that result in \cite{left-arxiv} implies the following. 

\begin{theorem}\label{th:left}\ 
Let $X_1,\dots,X_n$ be any \emph{nonnegative} independent r.v.'s such that for some nonnegative real numbers $m,m_1,\dots,m_n,s,s_1,\dots,s_n$ one has $0<s\le m^2/n$, 
\begin{equation*}\label{eq:>m_i,<s_i}
	\E X_i\ge m_i \quad\text{and}\quad\E X_i^2\le s_i  
\end{equation*}
for all $i\in\intr1n$ and 
\begin{equation*}\label{eq:>m,<s}
	m_1+\dots+m_n\ge m\quad\text{and}\quad s_1+\dots+s_n\le s. 
\end{equation*}
Let $Y_1,\dots,Y_n$ denote any independent identically distributed r.v.'s such that 
\begin{equation*}
	\P(Y_1=\tfrac sm)=1-\P(Y_1=0)=\tfrac{m^2}{ns}.  
\end{equation*}
Then 
\begin{align*}
	\E f(S_n)&\le\E f\big(Y_1+\dots+Y_n\big)  
	\le\E f\big(\tfrac sm\Pi_{m^2/s}\big) 
	\le\E f\big(m+Z\sqrt s\,\big) 
\end{align*}
for all $f\in\F_-^{1:3}$, where $\Pi_\la$ is any r.v.\ having the Poisson distribution with parameter $\la\in(0,\infty)$ and $Z$ is any standard normal r.v.  
\end{theorem}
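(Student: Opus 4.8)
The plan is to prove the asserted chain link by link, treating each inequality as a statement of stochastic domination with respect to the cone $\F_-^{1:3}(\R)$ and discharging it by a single application of Corollary~\ref{cor:dual,probab,refl} with $k=1$, $n=3$ (so that $k\le n$ and, taking $I=\R$, $b=\infty$). Writing $S_n:=X_1+\dots+X_n$ and letting $A,B$ range over consecutive entries of the list $S_n,\ Y_1+\dots+Y_n,\ \tfrac sm\Pi_{m^2/s},\ m+Z\sqrt s$, the target $\E f(A)\le\E f(B)$ for all $f\in\F_-^{1:3}$ is exactly what Corollary~\ref{cor:dual,probab,refl} yields when its generic ``$X$'' and ``$Y$'' are taken to be $B$ and $A$, respectively. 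So it suffices to verify, for each consecutive pair $(A,B)$, the three conditions of that corollary.

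First I would clear away the structural conditions. All four laws have finite first (indeed third) moments: since $X_i\ge0$, one has $\E X_i\le\sqrt{\E X_i^2}\le\sqrt{s_i}<\infty$, hence $\E S_n<\infty$; $Y_1+\dots+Y_n$ is bounded; and the Poisson and normal laws have moments of every order. Thus the admissibility hypothesis of Corollary~\ref{cor:dual,probab,refl}, namely finiteness of $\E p(\cdot)$ for nonincreasing affine $p$, holds automatically. Condition (i) is vacuous, since $k=1$ makes the index set $\intr1{k-1}$ empty. Condition (ii) reduces for $k=1$ to the single mean comparison $\E B\le\E A$; a direct computation gives $\E(Y_1+\dots+Y_n)=n\cdot\tfrac sm\cdot\tfrac{m^2}{ns}=m$, $\E\big(\tfrac sm\Pi_{m^2/s}\big)=\tfrac sm\cdot\tfrac{m^2}{s}=m$, and $\E(m+Z\sqrt s)=m$, whereas $\E S_n=\sum_i\E X_i\ge\sum_i m_i\ge m$, so that $\E B\le\E A$ along every link (with equality for the last two and $\E B=m\le\E S_n$ for the first).

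The substantive content, and the main obstacle, is condition (iii): the third-order left-tail comparison $\E(t-A)_+^3\le\E(t-B)_+^3$ for every $t\in\R$. The three required instances, namely $\E(t-S_n)_+^3\le\E(t-\sum_iY_i)_+^3$ (a sum of conditionally bounded nonnegative summands against the matching two-point law), $\E(t-\sum_iY_i)_+^3\le\E(t-\tfrac sm\Pi_{m^2/s})_+^3$ (the binomial-to-Poisson step), and $\E(t-\tfrac sm\Pi_{m^2/s})_+^3\le\E(t-(m+Z\sqrt s))_+^3$ (the Poisson-to-normal step), are precisely the moment inequalities established in \cite{left-arxiv}, which I would invoke. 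All quantities are finite, e.g.\ $0\le(t-S_n)_+^3\le t_+^3$ because $S_n\ge0$, and similarly for the other laws. The role of Corollary~\ref{cor:dual,probab,refl} is then to upgrade these inequalities, proved only for the single test-function family $x\mapsto(t-x)_+^3$, together with the mean condition, to the entire cone $\F_-^{1:3}$.

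Finally I would assemble the three links: each gives $\E f(A)\le\E f(B)$ for all $f\in\F_-^{1:3}$, and transitivity along the chain $S_n,\ \sum_iY_i,\ \tfrac sm\Pi_{m^2/s},\ m+Z\sqrt s$ delivers the stated conclusion. I expect the genuine difficulty to reside entirely in condition (iii); everything else is the routine bookkeeping of checking the hypotheses of Corollary~\ref{cor:dual,probab,refl} and computing the (coinciding) means.
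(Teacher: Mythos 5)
Your proposal is correct and follows essentially the same route as the paper, which offers no self-contained proof but states the theorem as a consequence of the main result of \cite{left-arxiv}, itself obtained (as the paper notes) by exactly the reduction you perform: apply Corollary~\ref{cor:dual,probab,refl} with $k=1$, $n=3$, $I=\R$, check admissibility and the mean comparison directly, and outsource the substantive third-order left-tail inequalities $\E(t-\cdot)_+^3$ to \cite{left-arxiv}. Your bookkeeping of conditions (i)--(iii) and of the coinciding means is accurate.
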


Variants of this result for the classes $\F_-^{2:2}$, $\F_-^{3:2}$, $\F_-^{2:3}$, $\F_-^{3:3}$, and $\F_-^{4:3}$ of functions in place of $\F_-^{1:3}$ are given in \cite[Remark 2.5]{left-arxiv}.   

Similarly, it would be quite convenient to use Corollary~\ref{cor:dual,powers,1} of the present paper in place of Lemmas~1--4 in \cite{asymm} and in place of Lemmas 4.5 and 4.6 in \cite{pin-hoeff-arxiv-reftoAIHP}; those lemmas in \cite{asymm,pin-hoeff-arxiv-reftoAIHP} were proved in a rather ad hoc manner. 


\bibliographystyle{abbrv}


\bibliography{C:/Users/ipinelis/Dropbox/mtu/bib_files/citations12.13.12}


\end{document}